\newtheorem{theorem}{Theorem}
\newtheorem{lemma}[theorem]{Lemma}
\newtheorem{corollary}[theorem]{Corollary}
\theoremstyle{definition}
\theoremstyle{remark}
\newtheorem{claim}[theorem]{Claim}
\numberwithin{equation}{section}
\newcommand{\set}[1]{\left\{{#1}\right\}}
\newcommand{\measset}[1]{m \left(\set{#1}\right)}
\newcommand{\B}{\mathcal{B}}
\newcommand{\D}{\mathbb{D}}
\newcommand{\N}{\mathbb{N}}
\newcommand{\R}{\mathbb{R}}
\newcommand{\C}{\mathbb{C}}
\newcommand{\A}{\mathcal{A}}
\DeclareMathOperator{\Real}{Re}
\def\circle{\partial \D}
\def\ic{\int_{\partial \D}}
\title{A Central Limit Theorem for Inner Functions}
\author{Artur Nicolau}
\address{Universitat Aut\`onoma de Barcelona, Departament de Matem\`atiques,  08193 Barcelona}
\email{artur@mat.uab.cat}
\author{Od\'i Soler i Gibert}
\address{Universitat Aut\`onoma de Barcelona, Departament de Matem\`atiques,  08193 Barcelona}
\email{odisoler@mat.uab.cat}
\thanks{Both authors are supported in part by the Generalitat de Catalunya (grant 2017 SGR 395)
        and the Spanish Ministerio de Ci\'encia e Innovaci\'on (project  MTM2017-85666-P)}
\begin{document}

    \begin{abstract}
        A Central Limit Theorem for linear combinations of iterates of an inner function is proved.
        The main technical tool is Aleksandrov Desintegration Theorem for Aleksandrov-Clark measures.
    \end{abstract}

    \maketitle

    \section{Introduction and main results}
    \label{sec:Introduction}

    Inner functions are analytic mappings from the unit disc $\D$ into itself
    whose radial limits are of modulus one at almost every point of the unit circle $\partial \D$.
    Inner functions were introduced by R. Nevanlinna and after the pioneering work of brothers Riesz,
    Frostmann and Beurling, they have become a central notion in Analysis.
    See for instance \cite{ref:GarnettBoundedAnalyticFunctions}.
    Any inner function $f$ induces a mapping from the unit circle into itself defined
    at almost every point $z \in \circle$ by $f(z) = \lim_{r \to 1} f(rz)$.
    This boundary mapping will be also called $f$.
    It is well known that if $f(0)=0$,
    normalized Lebesgue measure $m$ in the unit circle is invariant under this mapping,
    that is, $m(f^{-1} (E)) = m(E)$ for any measurable set $E \subset \circle$.
    Several authors have also studied the distortion of Hausdorff measures by this mapping.
    See \cite{ref:FernandezPestanaDistortionInnerFunctions} and \cite{ref:LeviNicolauSoler}.
    Dynamical properties of the mapping $f\colon \circle \rightarrow \circle$,
    as recurrence, ergodicity, mixing, entropy and others have been studied
    by Aaranson \cite{ref:AaronsonErgodicInnerFunctions}, Crazier \cite{ref:CraizerEntropyInnerFunctions},
    Doering and Ma\~{n}e \cite{ref:DoeringMane},
    Fern\'andez, Meli\'an and Pestana \cite{ref:FernandezMelianPestanaInnerFunctionsMixing},
    \cite{ref:FernandezMelianPestanaExpandingMaps},
    Neurwirth \cite{ref:NeuwirthErgodicity}, Pommerenke \cite{ref:PommerenkeErgodicInnerFunctions}, and others.
    Dynamical properties of inner functions have been recently used in several problems
    on the dynamics of meromorphic functions in simply connected Fatou components.
    See \cite{ref:BaranskiFagellaJarqueKarpinskaAccessesInfinity},
    \cite{ref:BaranskiFagellaJarqueKarpinskaEscapingPoints}
    and \cite{ref:EvdoridouFagellaJarqueSixsmithInnerFunctionSingularities}.

    It is well known that in many senses lacunary series behave as sums of independent random variables.
    Salem and Zygmund (\cite{ref:SalemZygmundLacunarySeries-I} and \cite{ref:SalemZygmundLacunarySeries-II})
    proved a version of the Central Limit Theorem for lacunary series and, a few years later,
    Weiss proved a version of the Law of the Iterated Logarithm in this context (\cite{ref:WeissLILLacunarySeries}).
    Our main result is a Central Limit Theorem for linear combinations of iterates of an inner function fixing the origin.
    It is worth mentioning that in our result no lacunarity assumption is needed.
    Recall that a sequence of measurable functions $\{f_N \}$ defined
    at almost every point in the unit circle converges in distribution
    to a (circullary symmetric) standard complex normal variable if for any measurable set $K \subset \C$, one has 
    \begin{equation*}
        \lim_{N \to \infty} \measset{z \in \circle\colon f_N (z) \in K}
        = \frac{1}{ 2 \pi} \int_K e^{- |w|^2 / 2}\, dA(w) . 
    \end{equation*} 
    As it is usual we denote by $f^n$ the $n$-th iterate of the function $f$.  

    \begin{theorem}
        \label{thm:MainTheorem}
        Let $f$ be an inner function with $f(0)=0$ which is not a rotation.
        Let $\{a_n\}$ be a sequence of complex numbers.
        Consider 
        \begin{equation}
            \label{eq:NSumVariance}
            \sigma_N^2
            = \sum_{n=1}^{N} |a_n|^2 + 2 \Real \sum_{k=1}^N f'(0)^k \sum_{n=1}^{N-k} \overline{a_n} a_{n+k},
            \quad  N=1,2,\ldots
        \end{equation}
        Assume there exists a constant $\eta >0$ such that 
        \begin{equation}
            \label{eq:CoefficientGrowthHpth}
            \lim_{N \to \infty}  \frac{ \sup \set{|a_n|^2\colon n \leq N} }{\left(\sum_{n=1}^N |a_n|^2\right)^{(1- \eta)/2}} =0. 
        \end{equation}
        Then 
        \begin{equation*}
            \frac{1}{ \sqrt{2} \sigma_N} \sum_{n=1}^{N} a_n f^n
        \end{equation*}
        converges in distribution to a standard complex normal variable.  
    \end{theorem}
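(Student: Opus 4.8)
\emph{Sketch of the proposed proof.} Write $F_N=\sum_{n=1}^N a_n f^n$ and $S_N=F_N/(\sqrt2\,\sigma_N)$, and proceed by the method of moments. An elementary computation using the invariance of $m$ under the maps $f^k$ gives $\ic f^n\overline{f^m}\,dm=f'(0)^{\,n-m}$ for $n\ge m$; hence $\ic|F_N|^2\,dm=\sigma_N^2$ (this is exactly what \eqref{eq:NSumVariance} encodes), so that $\ic|S_N|^2\,dm=\tfrac12$ for every $N$. Since a circularly symmetric complex Gaussian is determined by its moments and $\{S_N\}$ is tight, it suffices to prove that for all integers $p,q\ge 0$,
\[
  \ic S_N^{\,p}\,\overline{S_N}^{\,q}\,dm\ \xrightarrow[\;N\to\infty\;]{}\
  \begin{cases}\,0, & p\neq q,\\[2pt]\,p!\,2^{-p}, & p=q,\end{cases}
\]
the right-hand side being the corresponding moment of a circularly symmetric complex Gaussian of second moment $\tfrac12$; moment convergence then yields convergence in distribution, the $2r$-th moments of $S_N$ being bounded uniformly in $N$ by the same estimates. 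Expanding $F_N^{\,p}\overline{F_N}^{\,q}$, everything reduces to the correlation integrals
\[
  I(\mathbf n,\mathbf m)=\ic \prod_{i=1}^{p} f^{n_i}\ \overline{\prod_{j=1}^{q} f^{m_j}}\,dm
  \qquad\bigl(\mathbf n\in\{1,\dots,N\}^p,\ \mathbf m\in\{1,\dots,N\}^q\bigr),
\]
and to estimating $(\sqrt2\,\sigma_N)^{-(p+q)}\sum_{\mathbf n,\mathbf m} a_{n_1}\cdots a_{n_p}\,\overline{a_{m_1}\cdots a_{m_q}}\,I(\mathbf n,\mathbf m)$.

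The evaluation of $I(\mathbf n,\mathbf m)$ is where Aleksandrov's disintegration theorem enters. Writing $F_N=a_1 f+\bigl(\sum_{k=1}^{N-1}a_{k+1}f^k\bigr)\circ f$ sets up a recursion that peels off the coefficients one at a time: $\ic(G\circ f)\,dm=\ic G\,dm$ on the one hand, and on the other, the disintegration theorem amounts to the statement that the conditional distribution of $z$ under $m$ given $f(z)=\alpha$ is the Aleksandrov--Clark measure $\mu_\alpha$, so that $\ic \psi(z)\,(G\circ f)(z)\,dm(z)=\ic G(\alpha)\bigl(\ic\psi\,d\mu_\alpha\bigr)\,dm(\alpha)$ for $G\in L^1(m)$ and $\psi$ bounded. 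Iterating this, and absorbing the monomial phases $\zeta^{s}\overline{\zeta}^{\,t}$ that appear at each step into iterated integrals of monomials against the measures $\mu_\alpha$, one obtains an explicit finite formula for $I(\mathbf n,\mathbf m)$, hence for every moment of $S_N$; for $p=q=1$ this reproduces $\ic|F_N|^2\,dm=\sigma_N^2$ term by term. Note also that $|f'(0)|<1$ by the Schwarz lemma since $f$ is not a rotation --- this is what makes the series in \eqref{eq:NSumVariance} converge and is the source of the geometric decay exploited below.

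The main contribution comes from tuples $(\mathbf n,\mathbf m)$ whose $p+q$ indices split, under some bijection $\pi$ pairing each $n_i$ with an $m_{\pi(i)}$, into well-separated blocks: for these the formula for $I(\mathbf n,\mathbf m)$ factors through $\pi$ into a product, over the pairs, of the correlations $\ic f^{n_i}\overline{f^{m_{\pi(i)}}}\,dm$, each equal to $f'(0)^{\,k}$ or $\overline{f'(0)}^{\,k}$. Summing over all such tuples reproduces the complex Wick/Isserlis formula: when $p=q$, each of the $p!$ matchings contributes an independent factor $\sum_{n,m}a_n\overline{a_m}\,\ic f^n\overline{f^m}\,dm=\sigma_N^2$, for a total of $(1+o(1))\,p!\,\sigma_N^{2p}$, while when $p\neq q$ there is no such tuple at all. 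Dividing by $(\sqrt2\,\sigma_N)^{p+q}$ gives precisely the limits above.

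What remains --- and this I expect to be the main obstacle --- is to show that all other tuples contribute $o(\sigma_N^{p+q})$. Two effects must be controlled. First, configurations in which three or more indices lie within a bounded (or geometrically controlled) range: a Cauchy--Schwarz estimate bounds the corresponding partial sum by a constant times a positive power of $\sup_{n\le N}|a_n|$ times a power of $\sum_{n\le N}|a_n|^2$, and since $\sigma_N^{p+q}\asymp\bigl(\sum_{n\le N}|a_n|^2\bigr)^{(p+q)/2}$ (again because $|f'(0)|<1$), this is negligible precisely by the quantitative Lindeberg-type condition \eqref{eq:CoefficientGrowthHpth}, whose exponent $\eta$ provides exactly the room required. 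Second --- and here lacunarity is genuinely replaced by the disintegration machinery --- one must exclude ``anomalous resonances'': tuples that are neither pairings nor clusters but for which $I(\mathbf n,\mathbf m)$ still fails to be small. The Clark-measure formula for $I$ handles these: away from the pairing regime the successive integrations against the $\mu_\alpha$ force either a genuine cluster of indices or a product of geometrically small factors in the gaps between indices (for $f(z)=z^2$, for instance, $I(\mathbf n,\mathbf m)$ is supported exactly on the relations $\sum 2^{n_i}=\sum 2^{m_j}$), so the offending tuples are confined to a set whose weighted cardinality is again killed by \eqref{eq:CoefficientGrowthHpth}. Organising this classification of index configurations and summing the resulting geometric series uniformly in $N$ is the delicate part; the role of Aleksandrov's disintegration theorem is precisely to reduce each correlation integral $I(\mathbf n,\mathbf m)$ to an explicit, estimable expression, which is what allows the argument to go through with no lacunarity hypothesis anywhere.
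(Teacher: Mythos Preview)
Your route via the method of moments is genuinely different from the paper's. The paper does \emph{not} compute the mixed moments $\int S_N^{\,p}\overline{S_N}^{\,q}\,dm$; it proves convergence of the characteristic function $\varphi_N(t)=\int e^{i\langle t,T_N\rangle}\,dm$ after a Bernstein-type block decomposition. The index range $\{1,\dots,N\}$ is cut into long blocks $\A_k$ separated by shorter gaps, one sets $\xi_k=\sum_{n\in\A_k}a_nf^n$, shows the gap pieces are negligible in $L^2$, and then treats the $\xi_k$ as approximately independent. Two structural advantages this buys are: (i) an \emph{exact} identity $\int\prod_k|\xi_k|^2\,dm=\prod_k\int|\xi_k|^2\,dm$ (Theorem~\ref{thm:SquaredModuliPartialSums}, via Lemma~\ref{lemma:IteratesProduct}), which replaces independence at the level of squared moduli and drives both the $\|f_N\|_2$ bound and the vanishing of the ``$A$'' cross-term in the $\|g_N\|_2$ estimate; and (ii) the higher-correlation bound (Theorem~\ref{thm:HigherCorrelations}) is only ever invoked on products $\prod_jf^{\varepsilon_jn_j}$ with \emph{all} gaps $n_{j+1}-n_j$ large, which is what makes the exponential decay summable. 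Your approach, by contrast, must control $I(\mathbf n,\mathbf m)$ for completely arbitrary index configurations and extract the Wick count $p!$ from a sum in which pairings can interleave and do not factor exactly.

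Your sketch is plausible in outline---moment methods do work for exponentially mixing systems---but the ``delicate part'' you flag is not just bookkeeping, and as written there is a gap. First, your claim that each of the $p!$ matchings independently contributes a factor $\sigma_N^2$ needs the justification that distinct matchings label essentially disjoint sets of (non-degenerate, well-separated) tuples and that $I(\mathbf n,\mathbf m)$ factors on each; Lemma~\ref{lemma:IteratesProduct} gives exact factorization only when consecutive pairs in the sorted order are separated, so interleaved pairings and pairs that sit close to one another must each be shown to contribute $o(\sigma_N^{2p})$---this is a genuine combinatorial estimate, not a consequence of the disintegration formula alone. Second, to pass from moment convergence to convergence in distribution you need $\sup_N\int|S_N|^{2r}\,dm<\infty$ for every $r$; the paper only establishes the $L^4$ bound (Theorem~\ref{thm:L2L4NormEstimates}\eqref{thm:item:L4NormEstimate}), and extending it to all $r$ is essentially the same work as the full moment computation you are proposing. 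The block decomposition is precisely the device the paper uses to bypass this classification of index configurations: it manufactures the separation that makes both Theorem~\ref{thm:SquaredModuliPartialSums} and Theorem~\ref{thm:HigherCorrelations} apply directly, and it keeps the whole argument at the level of $L^2$ and $L^4$.
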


    Let $f$ be an inner function fixing the origin.
    Then it is well known that Lebesgue measure $m$ is ergodic.
    Hence the classical Ergodic Theorem gives that
    \begin{equation*}
        \lim_{N \to \infty}  \frac{1}{N} \sum_{n=1}^{N}  f^n (z) = 0  
    \end{equation*}
    at almost every point $z \in \circle$.
    This can be understood as a version of the Law of Large Numbers.
    Our result provides the corresponding version of the Central Limit Theorem.
    Actually taking $a_n = 1$, $n=1,2\ldots$, in Theorem \ref{thm:MainTheorem},
    one can easily show that $\lim_{N \to \infty} \sigma_N^2 / N \sigma^2 = 1$, where
    \begin{equation}
        \label{eq:CoefficientOneVariance}
        \sigma^2 = \Real \frac{1+ f'(0)}{1 - f'(0)}
    \end{equation}
    and we deduce the following result. 

    %  \cite[Theorem~1.2]{Aleksandrov2002}  we show in Section~\ref{Sec3} that all finite-component inner functions belong to $\I$.

    \begin{corollary}
        \label{Coro1}
        Let $f$ be an inner function with $f(0)=0$ which is not a rotation.
        Then 
        \begin{equation*}
            \frac{1}{\sqrt{2N}} \sum_{n=1}^N f^n 
        \end{equation*}
        converges in distribution to a complex normal variable with mean $0$
        and variance $\sigma^2$ given by \eqref{eq:CoefficientOneVariance},
        that is, for any measurable set $K \subset \C$, we have
        \begin{equation*}
            \lim_{N \to \infty} \measset{z \in \circle\colon (2N)^{-1/2} \sum_{n=1}^{N} f^n (z) \in K}
            = \frac{1}{ 2 \pi \sigma^2} \int_K e^{- |w|^2 / 2 \sigma}\, dA(w) . 
        \end{equation*}
    \end{corollary}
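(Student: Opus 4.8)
The plan is to deduce Corollary~\ref{Coro1} from Theorem~\ref{thm:MainTheorem} by specializing to $a_n = 1$ for every $n$, so the real work is twofold: checking the hypotheses of the theorem, and computing the asymptotics of the normalizing constants $\sigma_N$. For the first part, with $a_n=1$ one has $\sup\set{|a_n|^2\colon n\le N}=1$ and $\sum_{n=1}^N|a_n|^2=N$, so the quotient in \eqref{eq:CoefficientGrowthHpth} equals $N^{-(1-\eta)/2}$, which tends to $0$ for any fixed $\eta\in(0,1)$; thus \eqref{eq:CoefficientGrowthHpth} holds.

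For the second part, write $\lambda=f'(0)$. Since $f$ fixes the origin and is not a rotation, the Schwarz lemma gives $|\lambda|<1$. Substituting $a_n=1$ into \eqref{eq:NSumVariance} and using $\sum_{n=1}^{N-k}\overline{a_n}a_{n+k}=N-k$ yields
\begin{equation*}
    \sigma_N^2 = N + 2\Real\sum_{k=1}^{N-1}(N-k)\lambda^k .
\end{equation*}
Expanding $(N-k)\lambda^k = N\lambda^k - k\lambda^k$ and using that $|\lambda|<1$, so that $\sum_{k\ge1}k\lambda^k$ converges absolutely (hence $\tfrac1N\sum_{k=1}^{N-1}k\lambda^k\to0$) while $\sum_{k=1}^{N-1}\lambda^k\to\lambda/(1-\lambda)$, we obtain
\begin{equation*}
    \frac{\sigma_N^2}{N}\longrightarrow 1+2\Real\frac{\lambda}{1-\lambda}=\Real\frac{1+\lambda}{1-\lambda}=\sigma^2 ,
\end{equation*}
with $\sigma^2$ as in \eqref{eq:CoefficientOneVariance}. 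Since $w\mapsto(1+w)/(1-w)$ maps $\D$ into the right half-plane, $\sigma^2>0$, and therefore $\sigma_N/\sqrt N\to\sigma>0$; in particular $\sigma_N>0$ for $N$ large, so the normalization below makes sense.

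With these two facts at hand, Theorem~\ref{thm:MainTheorem} applies and shows that $X_N:=(\sqrt2\,\sigma_N)^{-1}\sum_{n=1}^N f^n$ converges in distribution to a standard complex normal variable $Z$. Writing
\begin{equation*}
    \frac{1}{\sqrt{2N}}\sum_{n=1}^N f^n = \frac{\sigma_N}{\sqrt N}\,X_N
\end{equation*}
and using that $\sigma_N/\sqrt N$ is a deterministic sequence with limit $\sigma$, a Slutsky-type argument shows the left-hand side converges in distribution to $\sigma Z$; the law of $\sigma Z$ is the image of the law of $Z$ under the dilation $w\mapsto w/\sigma$, which is precisely the centered complex Gaussian with the density displayed in the statement. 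The main (and only mildly technical) point is this last stability under rescaling by convergent deterministic scalars: it is verified directly from the definition of convergence in distribution by comparing $\measset{z\colon c_N X_N(z)\in K}$ with $\measset{z\colon X_N(z)\in \sigma^{-1}K}$ for $K$ ranging over a convergence-determining family (e.g. open balls), and absorbing the discrepancy into the convergence $c_N=\sigma_N/\sqrt N\to\sigma$. Everything else is a direct substitution into Theorem~\ref{thm:MainTheorem}.
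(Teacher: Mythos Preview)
Your proof is correct and follows exactly the approach sketched in the paper: specialize Theorem~\ref{thm:MainTheorem} to $a_n\equiv 1$, verify hypothesis \eqref{eq:CoefficientGrowthHpth}, compute $\sigma_N^2/N\to\sigma^2$ from \eqref{eq:NSumVariance}, and rescale. The paper in fact gives fewer details than you do, merely asserting that one can easily show $\sigma_N^2/(N\sigma^2)\to 1$ and then stating the corollary.
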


    Observe that when $f'(0)$ is close to $1$ and hence $f$ is close to be the identity map,
    the variance $\sigma^2$ is large.
    However if $f'(0)$ is close to a unimodular constant different from $1$, the variance is small.
    On the opposite side, if $f'(0)=0$, $\sigma= 1$. 

    Let $H^2$ be the Hardy space of analytic functions in $\D$ whose Taylor coefficients are square summable.
    Let $\{a_n\}$ be a sequence of complex numbers.
    It is easy to show (see Theorem \ref{thm:L2L4NormEstimates}) that $\sum_n a_n f^n$ converges in $H^2$ if and only if $\sum_n |a_n|^2 < \infty$.
    A repetition of the proof of our main result gives the following version of the Central Limit Theorem for the tails. 

    \begin{theorem}
        \label{thm:TailsTheorem}
        Let $f$ be an inner function with $f(0)=0$ which is not a rotation.
        Let $\{a_n\}$ be a square summable sequence of complex numbers.
        Consider 
        \begin{equation}
            \label{eq:TailsVariance}
            \sigma^2 (N)
            = \sum_{n \geq N} |a_n|^2 + 2 \Real \sum_{k \geq 1} f'(0)^k \sum_{n \geq N} \overline{a_n} a_{n+k},
            \quad  N=1,2,\ldots
        \end{equation}
        Assume there exists a constant $\eta >0$ such that
        \begin{equation}
            \label{eq:TailsGrowthHpth}
            \lim_{N \to \infty}  \frac{ \sup \set{ |a_n|^2\colon n \geq N} }{\left(\sum_{n \geq N}  |a_n|^2\right)^{(1- \eta)/2}} =0. 
        \end{equation}
        Then 
        \begin{equation*}
            \frac{1}{ \sqrt{2} \sigma (N)} \sum_{n=N}^{\infty} a_n f^n
        \end{equation*}
        converges in distribution to a standard complex normal variable. 
    \end{theorem}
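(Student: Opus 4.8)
The plan is to mirror the proof of Theorem \ref{thm:MainTheorem}, replacing the partial sums $\sum_{n=1}^N a_n f^n$ by the tails $\sum_{n \geq N} a_n f^n$. The key analytic fact that makes the tail version legitimate is the norm estimate referenced above (Theorem \ref{thm:L2L4NormEstimates}): since $\{a_n\} \in \ell^2$, the series $\sum_n a_n f^n$ converges in $H^2$, so the tail $S_N := \sum_{n \geq N} a_n f^n$ is a well-defined function in $H^2 \subset L^2(\circle)$, and moreover $\|S_N\|_2^2 \to 0$ and, presumably from the same theorem, one has an $L^4$ bound $\|S_N\|_4 \lesssim \|S_N\|_2$ (a hypercontractivity-type estimate for these lacunary-like sums). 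These are exactly the ingredients one uses in the method of moments.

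First I would record that, by the same second-moment computation used to derive \eqref{eq:NSumVariance}, one has $\|S_N\|_2^2 = \sigma^2(N)$, where the cross terms $\int_{\partial \D} \overline{f^n} f^{n+k}\, dm = \overline{f'(0)}{}^{\,k}$ (valid for $f(0)=0$, $f$ not a rotation, as established earlier) produce the second sum in \eqref{eq:TailsVariance}; absolute convergence of that double sum follows from $|f'(0)| < 1$ (strict, since $f$ is not a rotation) together with Cauchy--Schwarz and $\{a_n\} \in \ell^2$. Then, following the strategy of the main theorem, I would use Aleksandrov's disintegration theorem for Aleksandrov--Clark measures to compute — or rather to control — the higher mixed moments of $\Real(S_N/\sqrt 2\,\sigma(N))$ and $\Imag(S_N/\sqrt 2\,\sigma(N))$, showing that they converge to the corresponding Gaussian moments. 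Condition \eqref{eq:TailsGrowthHpth} plays the same role as \eqref{eq:CoefficientGrowthHpth}: it guarantees that the normalizing denominator $\sigma(N)$ dominates the largest single coefficient strongly enough that no individual term $a_n f^n$ contributes a non-negligible atom to the limiting distribution — this is the Lindeberg-type smallness needed to kill the non-Gaussian contributions to the moments.

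The main obstacle, as in the proof of Theorem \ref{thm:MainTheorem}, is the moment estimate: one must show that for every pair of nonnegative integers $(p,q)$,
\begin{equation*}
    \lim_{N \to \infty} \frac{1}{\sigma(N)^{p+q}} \int_{\partial \D} \Real(S_N)^p\, \Imag(S_N)^q\, dm
\end{equation*}
equals the $(p,q)$-moment of a standard complex normal variable. Expanding $S_N^p \overline{S_N}^q$ produces sums over multi-indices of integrals $\int_{\partial \D} f^{n_1} \cdots f^{n_r} \overline{f^{m_1} \cdots f^{m_s}}\, dm$; the disintegration theorem reduces each such integral, by peeling off the highest iterate, to an integral against the Aleksandrov--Clark measures of $f$, and an inductive bookkeeping shows that only the ``paired'' terms survive in the limit after normalization, with the pairing weighted by the geometric factors $f'(0)^k$ that already appear in $\sigma^2(N)$. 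The combinatorics is identical to the partial-sum case once one checks that all the relevant sums are taken over $n \geq N$ rather than $n \leq N$ and that the tail sums in \eqref{eq:TailsVariance} converge absolutely; I would therefore present this as a direct adaptation, indicating the one or two places where the index ranges change and verifying that hypothesis \eqref{eq:TailsGrowthHpth} supplies the same control that \eqref{eq:CoefficientGrowthHpth} did.
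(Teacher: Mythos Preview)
The paper does not give a separate proof of Theorem~\ref{thm:TailsTheorem}; it simply says the argument for Theorem~\ref{thm:MainTheorem} repeats verbatim with the obvious change of index range. So the relevant question is whether your sketch matches the proof of Theorem~\ref{thm:MainTheorem}, and here there is a genuine discrepancy: you describe a \emph{method of moments} proof (compute all mixed moments $\int (\Real S_N)^p (\Imag S_N)^q\,dm$, show only paired terms survive, match Gaussian moments), whereas the paper's proof of Theorem~\ref{thm:MainTheorem} is a \emph{characteristic function} argument with a blocking decomposition. Concretely, the paper (i) splits $\{N,N+1,\dots\}$ into long blocks $\A_k$ of coefficient mass $\approx p_N$ separated by shorter gaps of mass $\approx q_N$; (ii) uses the $L^2$/$L^4$ estimates and Theorem~\ref{thm:SquaredModuliPartialSums} to show the gap contribution is negligible; (iii) writes $e^{i\langle t,T_N\rangle}$ as $\prod_k(1+i\langle t,\xi_k\rangle/\sqrt2\,\sigma_N)\exp(-\langle t,\xi_k\rangle^2/4\sigma_N^2)$ up to a controllable error; (iv) shows the product factor has bounded $L^2$ norm and mean tending to $1$, and the exponential factor has exponent tending to $-|t|^2/2$ in $L^2$. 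The higher-correlation estimate (Theorem~\ref{thm:HigherCorrelations}) enters only in step (iv), and only for indices separated by at least $q_N^\beta$, which is precisely what the blocking guarantees.

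Your moment approach is a legitimate alternative in spirit, but as written it has a gap: the ``inductive bookkeeping'' you invoke for $\int f^{n_1}\cdots f^{n_r}\overline{f^{m_1}\cdots f^{m_s}}\,dm$ is only available in the paper when the indices are $q$-separated (Theorem~\ref{thm:HigherCorrelations}). For adjacent or nearby indices no such bound holds --- the correlation $\int \overline{f^n}f^{n+1}\,dm=f'(0)$ is a fixed nonzero constant --- so the unpaired terms do \emph{not} simply vanish after normalization; they contribute the cross terms already visible in $\sigma^2(N)$, and at higher order the combinatorics of which configurations survive is exactly what the blocking is designed to avoid. To make a direct moment proof work you would need either a sharper asymptotic (not just an upper bound) for the higher correlations at arbitrary index spacings, or you would end up re-introducing a block decomposition to create separation --- at which point you are back to the paper's scheme. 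If your goal is to adapt the paper's argument, you should follow the five-step characteristic-function proof in Section~\ref{sec:ProofMainThm}, replacing $S_N^2=\sum_{n\le N}|a_n|^2$ by $\sum_{n\ge N}|a_n|^2$ and checking that hypotheses \eqref{eq:PVFirst}--\eqref{eq:PVSec} of Lemma~\ref{lemma:PartialVariances} and the separation estimate \eqref{eq:IndicesSeparation} go through with the tail hypothesis \eqref{eq:TailsGrowthHpth} in place of \eqref{eq:CoefficientGrowthHpth}.
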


    Let $S_N^2 = \sum_{n=1}^N |a_n|^2$.
    It is easy to show (see Theorem \ref{thm:L2L4NormEstimates}) that there exists a constant $C=C(f)>0$
    such that $C^{-1} S_N^2 \leq \sigma_N^2 \leq C S_N^2$, $N=1,2, \ldots$.
    When $f'(0)=0$ we have ${\sigma}_N = S_N$ but in general, both quantities do not coincide.
    However if the following uniform quasiorthogonality condition holds
    \begin{equation}
        \label{eq:Quasiorthogonality}
        \lim_{N \to \infty} \frac{\sup_{k \leq N} \left|\sum_{n=1}^{N-k} \overline{a_n} a_{n+k}\right|  }{S_N^2} =0, 
    \end{equation}
    then $\lim_{N \to \infty} {S_N}/\sigma_N = 1$ and Theorem \ref{thm:MainTheorem} gives that
    \begin{equation*}
        \frac{1}{\sqrt{2} S_N} \sum_{n=1}^N a_n f^n
    \end{equation*}
    converges in distribution to a standard complex normal variable. 

    We now make some remarks on the assumption and proof of Theorem \ref{thm:MainTheorem}.
    Condition \eqref{eq:CoefficientGrowthHpth} implies that $\sum_n |a_n|^2 = \infty$,
    but one can not expect this last condition to be sufficient in Theorem \ref{thm:MainTheorem}.
    However note that if $\{a_n\}$ is bounded, both conditions are equivalent.
    The proof of Theorem \ref{thm:MainTheorem} uses two relevant properties of the iterates of an inner function fixing the origin.
    The first one is that the square of the modulus of the partial sums are uncorrelated.
    More concretely, given a set $\A$ of positive integers, consider the corresponding partial sum 
    \begin{equation*}
        \xi (\A) = \sum_{n \in \A} a_n f^n.
    \end{equation*}
    If $\A \cap \B = \emptyset$, we will show in Theorem \ref{thm:SquaredModuliPartialSums} that 
    \begin{equation}
        \label{eq:PartialSumsUncorrelated}
        \ic |\xi (\A)|^2|\xi (\B)|^2\, dm = \left(\ic |\xi (\A)|^2\, dm\right) \left(\ic |\xi (\B)|^2\, dm\right).
    \end{equation}
    The second property provides an exponential decay of the higher order correlations of the iterates.
    More concretely, let $\varepsilon_i = 1$ or $\varepsilon_i = -1$ for $i=1,2,\ldots, k$
    and $n_1 < \ldots < n_k$ be positive integers satisfying $n_j - n_{j-1} \geq q \geq 1$, $j=2, \ldots , k$.
    Denote $\boldsymbol{\varepsilon} = (\varepsilon_1,\ldots,\varepsilon_k)$
    and $\boldsymbol{n} = (n_1,\ldots,n_k).$
    For a positive integer $n$, denote by $f^{-n}$ the function defined by $f^{-n} (z) = \overline{f^n (z)}$, $z \in \circle$.
    We will prove in Theorem \ref{thm:HigherCorrelations} that there exists a constant $C>0$, independent of the indices, such that  
    \begin{equation}
        \label{eq:HigherOrderCorrelations}
        \left|\ic \prod_{j=1}^k f^{\varepsilon_j n_j}\, dm\right|
        \leq C^k k! |f'(0)|^{\Phi(\boldsymbol{\varepsilon},\boldsymbol{n})}, \quad k=1,2, \ldots ,
    \end{equation}
    if $q$ is sufficiently large
    and where $\Phi$ is a certain function depending on the choice of indices
    that satisfies $\Phi(\boldsymbol{\varepsilon},\boldsymbol{n}) \geq kq/4$.
    The main technical tool in the proof of both properties
    \eqref{eq:PartialSumsUncorrelated} and \eqref{eq:HigherOrderCorrelations} is
    the theory of  Aleksandrov-Clark measures and more concretely, the Aleksandrov Desintegration Theorem. 

    The paper is organized as follows.
    In Section \ref{sec:ACMeasures} we introduce Aleksandrov-Clark measures and use them to prove property \eqref{eq:PartialSumsUncorrelated}.
    In Section \ref{sec:NormsPartialSums} we estimate the $L^2$ and the $L^4$ norm of $\xi(A)$.
    In Section \ref{sec:HigherOrderCorrel} we prove estimate \eqref{eq:HigherOrderCorrelations}.
    The proof of Theorem \ref{thm:MainTheorem} is given in Section \ref{sec:ProofMainThm}.   

    \section{Alekandrov-Clark measures and Property \texorpdfstring{\eqref{eq:PartialSumsUncorrelated}}{(\ref{eq:PartialSumsUncorrelated})}}
    \label{sec:ACMeasures}
    We start with an elementary auxiliary result which is just a restatement of the invariance of Lebesgue measure. 

    \begin{lemma}
        \label{lemma:DeconjugateInnerFunction}
        Let $f$ be an inner function with $f(0)=0$. 
        \begin{enumerate}[(a)]
            \item
            \label{lemma:item:DeconjugateInnerFunction}
            Let $G$ be an integrable function on $\circle$.
            Then
            \begin{equation*}
                \ic G(f(z))\, dm(z) = \ic G(z)\, dm(z)
            \end{equation*}
            
            \item
            \label{lemma:item:IteratesCovariance}
            Let $k<j$ be positive integers.
            Then 
            \begin{equation*}
                \ic \overline{f^k} f^j\, dm = f'(0)^{j-k}
            \end{equation*}
        \end{enumerate}
    \end{lemma}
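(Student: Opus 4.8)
The plan is to deduce both statements from the mean value property of bounded analytic functions, together with the invariance of $m$ recalled in the introduction.

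Part \ref{lemma:item:DeconjugateInnerFunction} is nothing but the change of variables formula associated to the identity $m(f^{-1}(E)) = m(E)$, and to make the argument self-contained I would argue as follows. Let $\mu$ be the push-forward of $m$ under the boundary map $f$, that is, $\mu(E) = m(f^{-1}(E))$ for Borel sets $E \subset \circle$. For $n \geq 1$ the power $f^n$ (here the $n$-th power of $f$, not its iterate) belongs to $H^\infty$, so its zeroth Fourier coefficient equals $f(0)^n = 0$; conjugation handles negative exponents and $n=0$ is trivial. Hence $\ic z^n\, d\mu(z) = \ic f(z)^n\, dm(z) = \ic z^n\, dm(z)$ for every $n \in \mathbb{Z}$, so $\mu$ and $m$ have the same Fourier coefficients and therefore $\mu = m$. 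The abstract change of variables formula $\ic G\, d\mu = \ic G\circ f\, dm$, valid for every $\mu$-integrable $G$, then gives the identity for all $G \in L^1(m)$. This passage from trigonometric polynomials to an arbitrary integrable $G$ is the only delicate point, and it is exactly where the invariance of $m$ enters.

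For part \ref{lemma:item:IteratesCovariance} I would use that $f^k$ is again an inner function with $f^k(0)=0$, so part \ref{lemma:item:DeconjugateInnerFunction} applies with $f$ replaced by $f^k$. Since the iterates satisfy $f^j = f^{j-k}\circ f^k$, applying that substitution to the bounded function $G(w) = \bar w\, f^{j-k}(w)$ yields
\begin{equation*}
    \ic \overline{f^k}\, f^j\, dm = \ic \bar w\, f^{j-k}(w)\, dm(w).
\end{equation*}
The right-hand side is the first Taylor coefficient of $f^{j-k}$, namely $(f^{j-k})'(0)$: indeed $f^{j-k}\in H^\infty$ with $f^{j-k}(0)=0$ and $\bar w \in L^2(\circle)$, which legitimates term-by-term integration of the Taylor series. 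Finally the chain rule together with $f(0)=0$ gives $(f^{j-k})'(0) = f'(0)^{j-k}$, as claimed. Apart from keeping track of the iteration indices, no real obstacle is expected here.
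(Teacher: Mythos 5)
Your proof is correct and follows essentially the same route as the paper: part (a) is the change of variables formula derived from the invariance of $m$ under the boundary map $f$, and part (b) applies (a) to the inner function $f^k$ so as to reduce to the first Taylor coefficient of $f^{j-k}$, which equals $f'(0)^{j-k}$ by the chain rule. The only difference is that you re-derive the invariance $f_*m = m$ from the mean value property via Fourier--Stieltjes coefficients, whereas the paper simply quotes it as a known fact.
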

    \begin{proof}[Proof of Lemma~\ref{lemma:DeconjugateInnerFunction}]
        We can assume that $G$ is the characteristic function of a measurable set $E \subset \circle$.
        Since $m(f^{-1} (E)) = m (E)$, the identity \eqref{lemma:item:DeconjugateInnerFunction} follows.
        Using \eqref{lemma:item:DeconjugateInnerFunction} and Cauchy formula, we have
        \begin{equation*}
            \ic \overline{f^k} f^j\, dm = \ic \overline{z} f^{j-k} (z)\, dm(z) = f'(0)^{j-k}.
        \end{equation*}
    \end{proof}

    Given an analytic mapping from the unit disc into itself and a point $\alpha \in \circle$,
    the function $(\alpha + f)/(\alpha - f)$ has positive real part
    and hence there exists a positive measure $\mu_\alpha = \mu_\alpha (f)$ in the unit circle
    and a constant $C_{\alpha} \in \R$ such that
    \begin{equation}
        \label{eq:ACMeasureHerglotz}
        \frac{\alpha + f(w)}{\alpha - f(w)} = \ic \frac{z + w}{z - w}\, d\mu_\alpha(z) + iC_{\alpha}, \quad w \in \D. 
    \end{equation}
    The measures $\{\mu_\alpha\colon \alpha \in \circle\}$ are called the Aleksandrov-Clark measures of the function $f$.
    Clark introduced them in his paper \cite{ref:ClarkOneDimensionalPerturbations}
    and many of their deepest properties were found by Aleksandrov in \cite{ref:AleksandrovMeasurablePartitionsCircle},
    \cite{ref:AleksandrovMultiplicityBoundaryValuesInnerFunctions} and \cite{ref:AleksandrovInnerFunctionsAndRelatedSpaces}.
    The two surveys \cite{ref:PoltoratskiSarasonACMeasures} and \cite{ref:SaksmanACMeasures}
    as well as \cite[Chapter~IX]{ref:CimaMathesonRossCauchyTransform} contain their main properties and a wide range of applications.
    Observe that if $f(0)=0$ then $\mu_\alpha$ are probability measures.
    Moreover, $f$ is inner if and only if $\mu_\alpha$ is a singular measure for some (all) $\alpha \in \circle$.
    Assume $f(0)=0$.
    Computing the first two derivatives in formula \eqref{eq:ACMeasureHerglotz} and evaluating at the origin, we obtain 
    \begin{equation}
        \label{eq:FirstACMeasureMoment}
        \ic z\, d\mu_{\alpha}(z) = \overline{f'(0)} \alpha, \quad \alpha \in \circle, 
    \end{equation}
    and
    \begin{equation}
        \label{eq:SecondACMeasureMoment}
        \ic z^2\, d\mu_{\alpha}(z) = \overline{\frac{f''(0)}{2}} \alpha + \overline{f'(0)}^2 {\alpha}^2, \quad \alpha \in \circle.
    \end{equation}
    Our main technical tool is Aleksandrov Desintegration Theorem which asserts that 
    \begin{equation}
        \label{eq:AleksandrovDesintegrationThm}
        m = \ic \mu_\alpha\, dm(\alpha)
    \end{equation}
    holds true in the sense that
    \begin{equation*}
        \ic G\, dm = \ic \ic G(z)\, d\mu_\alpha(z)\, dm(\alpha), 
    \end{equation*}
    for any integrable function $G$ on the unit circle.
    Aleksandrov Desintegration Theorem will be used in our next auxiliary result. 

    \begin{lemma}
        \label{lemma:IteratesProduct}
        Let $f$ be an inner function with $f(0)=0$.
        For $k =1,2,\ldots,p$, let $n_k, j_k$,  be positive integers such that 
        \begin{equation}
            \label{eq:SeparationHpthTwoIteratesProd}
            \max \{ n_k , j_k \} < \min \{n_{k+1} , j_{k+1} \} , \quad  k=1,\ldots,p-1 .
        \end{equation}
        Then 
        \begin{equation}
            \label{eq:IteratesProduct}
            \int_{\circle} \prod_{k=1}^p f^{n_k} \overline{f^{j_k}} dm = \prod_{k=1}^p \int_{\circle}  f^{n_k} \overline{f^{j_k}} dm . 
        \end{equation}
    \end{lemma}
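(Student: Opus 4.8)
The plan is to argue by induction on $p$, peeling off the lowest‑indexed block at each step. The base case $p=1$ is the trivial identity. For the inductive step, observe first that conjugating both sides of \eqref{eq:IteratesProduct} and interchanging the roles of $n_k$ and $j_k$ (which leaves the separation hypothesis \eqref{eq:SeparationHpthTwoIteratesProd} unchanged) shows that it suffices to treat the case $n_1 \le j_1$. Moreover, if $n_1 = j_1$ then $f^{n_1}\overline{f^{j_1}}=1$ almost everywhere and, since the blocks $(n_2,j_2),\dots,(n_p,j_p)$ still satisfy \eqref{eq:SeparationHpthTwoIteratesProd}, the claim follows at once from the inductive hypothesis. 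So assume $n_1 < j_1$ and set $m_1 = j_1 = \max\{n_1,j_1\}$.

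Since \eqref{eq:SeparationHpthTwoIteratesProd} forces $n_k, j_k > m_1$ for every $k \ge 2$, we may write $f^{n_k} = f^{n_k - m_1}\circ f^{m_1}$ and $f^{j_k}=f^{j_k-m_1}\circ f^{m_1}$ on $\circle$, and also $f^{j_1}=f^{j_1-n_1}\circ f^{n_1}$. Hence, with $u = f^{n_1}(z)$, the whole product becomes a function of $u$ alone: $\prod_{k=1}^p f^{n_k}(z)\overline{f^{j_k}(z)} = u\,\overline{\psi(u)}\,G(\psi(u))$ for almost every $z$, where $\psi = f^{j_1-n_1}$ is inner with $\psi(0)=0$ and $\psi'(0)=f'(0)^{j_1-n_1}$, and $G(w)=\prod_{k=2}^p f^{n_k-m_1}(w)\overline{f^{j_k-m_1}(w)}$; here one uses the identities $f^{n_k-n_1}=f^{n_k-m_1}\circ\psi$ and $f^{j_k-n_1}=f^{j_k-m_1}\circ\psi$. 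Applying the invariance of $m$ under $f^{n_1}$, that is Lemma~\ref{lemma:DeconjugateInnerFunction}\eqref{lemma:item:DeconjugateInnerFunction}, gives
\begin{equation*}
    \ic \prod_{k=1}^p f^{n_k}\overline{f^{j_k}}\, dm = \ic u\,\overline{\psi(u)}\,G(\psi(u))\, dm(u).
\end{equation*}

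Now apply the Aleksandrov Desintegration Theorem \eqref{eq:AleksandrovDesintegrationThm} to $\psi$. Since the Aleksandrov--Clark measure $\mu_\alpha(\psi)$ is carried by the fibre $\{u\in\circle : \psi(u)=\alpha\}$, for $m$‑almost every $\alpha$ the factors $\overline{\psi(u)}$ and $G(\psi(u))$ agree $\mu_\alpha(\psi)$‑almost everywhere with the constants $\overline{\alpha}$ and $G(\alpha)$; pulling them out of the inner integral and using the first moment identity \eqref{eq:FirstACMeasureMoment} for $\psi$, namely $\int u\, d\mu_\alpha(\psi)(u) = \overline{\psi'(0)}\,\alpha$, we obtain
\begin{equation*}
    \ic u\,\overline{\psi(u)}\,G(\psi(u))\, dm(u) = \ic \overline{\alpha}\,G(\alpha)\,\overline{\psi'(0)}\,\alpha\, dm(\alpha) = \overline{f'(0)}^{\,j_1-n_1}\ic G\, dm.
\end{equation*}
The integral $\int_{\circle} G\, dm = \int_{\circle}\prod_{k=2}^p f^{n_k-m_1}\overline{f^{j_k-m_1}}\, dm$ is an instance of \eqref{eq:IteratesProduct} with $p-1$ blocks whose indices $(n_k-m_1, j_k-m_1)$ again satisfy \eqref{eq:SeparationHpthTwoIteratesProd}, so by the inductive hypothesis it equals $\prod_{k=2}^p \int_{\circle} f^{n_k-m_1}\overline{f^{j_k-m_1}}\, dm$. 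Finally, Lemma~\ref{lemma:DeconjugateInnerFunction}\eqref{lemma:item:IteratesCovariance} (together with taking conjugates) shows that $\int_{\circle} f^{a}\overline{f^{b}}\, dm$ depends only on $a-b$, so each of these factors equals $\int_{\circle} f^{n_k}\overline{f^{j_k}}\, dm$, and likewise $\overline{f'(0)}^{\,j_1-n_1} = \int_{\circle} f^{n_1}\overline{f^{j_1}}\, dm$. Multiplying these identities yields \eqref{eq:IteratesProduct}.

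The step I expect to require the most care is the passage through the desintegration formula: justifying that, after the change of variables, the factors depending only on $\psi(u)$ may be replaced by the corresponding constants on almost every fibre. This rests on the carrier property of Aleksandrov--Clark measures and on the almost everywhere compatibility of boundary values under composition of inner functions (the identities $f^{n_k-n_1}=f^{n_k-m_1}\circ\psi$, etc.), which must be handled with the usual measure‑theoretic care; everything else — the change of variables, the moment computation, and the bookkeeping with Lemma~\ref{lemma:DeconjugateInnerFunction} — is routine.
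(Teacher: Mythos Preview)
Your proof is correct and follows essentially the same route as the paper's: induction on $p$, reduction of the first block via the invariance property of Lemma~\ref{lemma:DeconjugateInnerFunction}\eqref{lemma:item:DeconjugateInnerFunction}, then the Aleksandrov Desintegration Theorem applied to $\psi=f^{j_1-n_1}$ together with the first moment identity \eqref{eq:FirstACMeasureMoment}, followed by the inductive hypothesis and the shift-invariance from Lemma~\ref{lemma:DeconjugateInnerFunction}\eqref{lemma:item:IteratesCovariance}. The only differences are cosmetic --- you spell out the $n_1=j_1$ case and the carrier property of $\mu_\alpha(\psi)$ more explicitly than the paper does.
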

    \begin{proof}[Proof of Lemma~\ref{lemma:IteratesProduct}]
        We argue by induction on $p$.
        Assume \eqref{eq:IteratesProduct} holds for $p-1$ products.
        We can assume $n_1 < j_1$.
        By part \eqref{lemma:item:DeconjugateInnerFunction} of Lemma \ref{lemma:DeconjugateInnerFunction} we have
        \begin{equation*}
            \ic \prod_{k=1}^p f^{n_k} \overline{f^{j_k}}\, dm
            = \ic z \overline{f^{j_1 - n_1} (z)} \prod_{k=2}^p f^{n_k - n_1} (z) \overline{f^{j_k - n_1} (z)}\, dm(z).  
        \end{equation*}
        Let $\{\mu_\alpha\colon \alpha \in \circle \}$ be the Aleksandrov-Clark measures of the inner function $f^{j_1 - n_1}$.
        The Aleksandrov Desintegration Theorem \eqref{eq:AleksandrovDesintegrationThm} gives that last integral can be written as
        \begin{equation*}
            \ic \ic z \overline{\alpha} \prod_{k=2}^p f^{n_k - j_1}(\alpha) \overline{f^{j_k - j_1}(\alpha)}\, d\mu_\alpha(z)\, dm(\alpha). 
        \end{equation*}
        By \eqref{eq:FirstACMeasureMoment} and part \eqref{lemma:item:IteratesCovariance} of Lemma \ref{lemma:DeconjugateInnerFunction},
        we have
        \begin{equation*}
            \ic z\, d\mu_\alpha(z) = {\overline{f'(0)}}^{j_1 - n_1} \alpha
            = \alpha \ic f^{n_1} \overline{f^{j_1}}\, dm . 
        \end{equation*}
        Hence
        \begin{equation*}
            \ic \prod_{k=1}^p f^{n_k} \overline{f^{j_k}}\, dm
            = \left(\ic f^{n_1} \overline{f^{j_1}}\, dm\right) \ic \prod_{k=2}^p f^{n_k - j_1} \overline{f^{j_k - j_1}}\, dm
        \end{equation*}
        and we can apply the inductive assumption.
        The invariance property of part \eqref{lemma:item:DeconjugateInnerFunction}
        of Lemma \ref{lemma:DeconjugateInnerFunction} finishes the proof.
    \end{proof}

    Our next result is the first important tool in the proof of Theorem \ref{thm:MainTheorem}.

    \begin{theorem}
        \label{thm:SquaredModuliPartialSums}
        Let $f$ be an inner function with $f(0)=0$.
        Let ${\A}_k$, $k =1,2,\ldots,p$, be finite collections of positive integers such that 
        \begin{equation}
            \label{eq:SeparationHpthPartialSums}
            \max \{ n\colon n \in {\A}_k \} < \min \{n\colon n \in {\A}_{k+1} \} , \quad k=1,\ldots , p-1.
        \end{equation}
        Consider 
        \begin{equation*}
            \xi_k = \sum_{n \in {\A}_k} a_n f^n.
        \end{equation*}
        Then 
        \begin{equation*}
            \ic \prod_{k=1}^p |\xi_k|^2\, dm = \prod_{k=1}^p \ic  |\xi_k|^2\, dm . 
        \end{equation*}
    \end{theorem}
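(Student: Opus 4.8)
The plan is to expand each factor $|\xi_k|^2$ into a finite double sum of products of iterates and reduce everything to Lemma~\ref{lemma:IteratesProduct}. Writing $|\xi_k|^2 = \sum_{n \in \A_k} \sum_{j \in \A_k} a_n \overline{a_j}\, f^n \overline{f^j}$, the product $\prod_{k=1}^p |\xi_k|^2$ becomes a finite sum, over all tuples $(n_1, j_1, \ldots, n_p, j_p)$ with $n_k, j_k \in \A_k$ for each $k$, of the terms $\left(\prod_{k=1}^p a_{n_k}\overline{a_{j_k}}\right) \prod_{k=1}^p f^{n_k}\overline{f^{j_k}}$. Since all the sums involved are finite, I may integrate term by term, obtaining
\begin{equation*}
    \ic \prod_{k=1}^p |\xi_k|^2\, dm = \sum \left(\prod_{k=1}^p a_{n_k}\overline{a_{j_k}}\right) \ic \prod_{k=1}^p f^{n_k}\overline{f^{j_k}}\, dm,
\end{equation*}
the sum ranging over the tuples just described.

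First I would check that each such tuple satisfies the separation hypothesis~\eqref{eq:SeparationHpthTwoIteratesProd} of Lemma~\ref{lemma:IteratesProduct}. Indeed, if $n_k, j_k \in \A_k$ and $n_{k+1}, j_{k+1} \in \A_{k+1}$, then \eqref{eq:SeparationHpthPartialSums} gives $\max\{n_k, j_k\} \leq \max\{n\colon n \in \A_k\} < \min\{n \colon n \in \A_{k+1}\} \leq \min\{n_{k+1}, j_{k+1}\}$, which is exactly what is required. Hence Lemma~\ref{lemma:IteratesProduct} applies to every term and yields $\ic \prod_{k=1}^p f^{n_k}\overline{f^{j_k}}\, dm = \prod_{k=1}^p \ic f^{n_k}\overline{f^{j_k}}\, dm$.

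Substituting this into the displayed identity and regrouping the finite sum as an iterated product, I would obtain
\begin{equation*}
    \ic \prod_{k=1}^p |\xi_k|^2\, dm = \prod_{k=1}^p \left( \sum_{n \in \A_k} \sum_{j \in \A_k} a_n \overline{a_j} \ic f^n \overline{f^j}\, dm \right) = \prod_{k=1}^p \ic |\xi_k|^2\, dm,
\end{equation*}
where in the last step I expand $|\xi_k|^2$ once more and integrate term by term. This completes the argument.

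Since the combinatorial bookkeeping is routine — all sums are finite, so there are no convergence or Fubini subtleties — the only substantial input is Lemma~\ref{lemma:IteratesProduct}, whose proof already packages the use of the Aleksandrov Desintegration Theorem. Thus I do not expect any genuine obstacle here; the one point deserving care is simply verifying that the separation of the blocks $\A_k$ forces the separation hypothesis of the lemma on every individual term of the expansion, which is the elementary computation displayed above.
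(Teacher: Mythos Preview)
Your argument is correct and essentially identical to the paper's own proof: both expand $\prod_k |\xi_k|^2$ as a linear combination of products $\prod_k f^{n_k}\overline{f^{j_k}}$ with $n_k,j_k\in\A_k$, verify that \eqref{eq:SeparationHpthPartialSums} yields the separation hypothesis \eqref{eq:SeparationHpthTwoIteratesProd}, and invoke Lemma~\ref{lemma:IteratesProduct} term by term. Your write-up is slightly more explicit about the regrouping step, but there is no substantive difference.
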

    \begin{proof}[Proof of Theorem~\ref{thm:SquaredModuliPartialSums}]
        Al almost every point of the unit circle we have
        \begin{equation*}
            |\xi_k|^2
            = \sum_{n \in {\A}_k} |a_n|^2 + \sum( \overline{a_n} a_j \overline{f^n} f^j + \overline{a_j} a_n \overline{f^j} f^n ),
        \end{equation*}
        where the last sum is taken over all indices $n,j \in {\A}_k$ with $j>n$.
        Hence $\prod |\xi_k|^2$ can be written as a linear combination of terms of the form 
        \begin{equation*}
            \prod f^{n_k} \overline{f^{j_k}},
        \end{equation*}
        where $n_k, j_k \in {\A}_k$.
        Observe that \eqref{eq:SeparationHpthPartialSums} gives
        the assumption \eqref{eq:SeparationHpthTwoIteratesProd} in Lemma \ref{lemma:IteratesProduct}.
        Now Lemma \ref{lemma:IteratesProduct} finishes the proof.
    \end{proof}

    \section{Norms of Partial Sums}
    \label{sec:NormsPartialSums}
    In this Section we will use Aleksandrov-Clark measures to estimate the $L^2$ and $L^4$ norms
    of linear combinations of iterates of an inner function fixing the origin.
    The main result of this Section is Theorem \ref{thm:L2L4NormEstimates}.
    It is worth mentioning that the asymptotic behavior of the Aleksandrov-Clark measures
    of iterates of an inner function has been studied in \cite{ref:GallardoNieminenACMeasures},
    but we will not use their results.
    As before, if $n$ is a positive integer, we will use the notation $f^{-n}$ to denote
    the function defined by $f^{-n} (z) = \overline{f^n (z)}$, for almost every $z \in \circle$.
    We start with a technical auxiliary result which will be used later. 

    \begin{lemma}
        \label{lemma:FourFactors}
        Let $f$ be an inner function with $f(0)=0$ which is not a rotation.
        Let $\varepsilon_k = 1$ or $\varepsilon_k = -1$ , $k =1,2,3,4$.
        \begin{enumerate}[(a)]
            \item
            \label{lemma:item:FourFactorsCancellation}
            Let $n_k$, $k =1,2,3,4$, be positive integers with $\max \{n_1 , n_2 \} < \min \{ n_3 , n_4 \}$.
            Then
            \begin{equation*}
                I = I(\varepsilon_1 n_1,-\varepsilon_1 n_2,n_3,n_4)
                = \ic f^{\varepsilon_1 n_1}f^{-\varepsilon_1 n_2}f^{n_3}f^{n_4}\, dm = 0. 
            \end{equation*}
            
            \item
            \label{lemma:item:SecondFactorSquared}
            Let $n_1 < n_2 < n_3$ be positive integers and 
            \begin{equation*}
                II = II(\varepsilon_1 n_1,\varepsilon_2 n_2,\varepsilon_3 n_3)
                = \ic f^{\varepsilon_1 n_1}  (f^{\varepsilon_2 n_2})^2 f^{\varepsilon_3 n_3}\, dm.
            \end{equation*}
            Then there exists a constant $C=C(f) >0$ independent of the indices $n_1, n_2, n_3$,
            such that $|II| \leq C |f'(0)|^{n_3 - n_1}$.
            
            \item
            \label{lemma:item:FirstFactorSquared}
            Let $n_1 < n_2 < n_3$ be positive integers and 
            \begin{equation*}
                III = III(\varepsilon_1 n_1 , \varepsilon_2 n_2 , \varepsilon_3 n_3)
                = \ic (f^{\varepsilon_1 n_1})^2  f^{\varepsilon_2 n_2} f^{\varepsilon_3 n_3}\, dm.
            \end{equation*}
            Then there exists a constant $C=C(f) >0$ independent of the indices $n_1, n_2, n_3$,
            such that $|III| \leq 1$ if $n_2 = n_1 + 1$ and $n_3 \leq n_2 + 2$,
            and  $|III| \leq C |f'(0)|^{n_3 - n_1}$ otherwise.
            
            \item
            \label{lemma:item:FourArbitraryFactors}
            Let $n_1 < n_2 < n_3 < n_4$ be positive integers and 
            \begin{equation*}
                IV = IV(\varepsilon_1 n_1 , \varepsilon_2 n_2 , \varepsilon_3 n_3 , \varepsilon_4 n_4)
                = \ic f^{\varepsilon_1 n_1} f^{\varepsilon_2 n_2} f^{\varepsilon_3 n_3} f^{\varepsilon_4 n_4}\, dm. 
            \end{equation*}
            Then there exists a constant $C=C(f) >0$ independent of the indices $n_1, n_2, n_3, n_4$,
            such that $|IV| \leq C |f'(0)|^{n_2 - n_1 + n_4 - n_3}$ if $n_4 - n_3 >2$,
            and $|IV| \leq C |f'(0)|^{n_3 - n_1}$ if $n_4 - n_3 \leq 2$.
            Moreover $|IV| = |f'(0)|^{n_2 - n_1 + n_4 - n_3}$ if $\varepsilon_1 \varepsilon_2 = \varepsilon_3\varepsilon_4 = -1$.
        \end{enumerate}
    \end{lemma}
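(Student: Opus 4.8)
The plan is to prove all four estimates by the mechanism already used for Lemma~\ref{lemma:IteratesProduct}: \emph{peel off the smallest iterate, then desintegrate}. If $n$ denotes the smallest index occurring in the integrand, part~\eqref{lemma:item:DeconjugateInnerFunction} of Lemma~\ref{lemma:DeconjugateInnerFunction} permits the change of variables $w = f^{n}(z)$, rewriting the integral as $\ic M(w)\, N\bigl(f^{d}(w)\bigr)\, dm(w)$, where $M$ is a monomial in $w,\overline{w}$ whose degree is the multiplicity of $n$, $d$ is the gap from $n$ to the next index, and $N$ is a product of iterates of $f$ and of conjugates of iterates of $f$. Arguing exactly as in the proof of Lemma~\ref{lemma:IteratesProduct}, the Aleksandrov Desintegration Theorem~\eqref{eq:AleksandrovDesintegrationThm} applied to the Aleksandrov--Clark measures $\{\mu_\alpha\}$ of the inner function $f^{d}$ turns this into $\ic N(\alpha)\bigl(\ic M\, d\mu_\alpha\bigr)\, dm(\alpha)$, and the inner integral is evaluated from \eqref{eq:FirstACMeasureMoment} when $\deg M = 1$ and from \eqref{eq:SecondACMeasureMoment} when $\deg M = 2$. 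Iterating this collapses the integral, each step consuming one gap: a gap $d$ consumed by a degree-one monomial contributes a factor of modulus $|f'(0)|^{d}$, whereas a degree-two monomial contributes, besides a term carrying $|f'(0)|^{2d}$, a term built from the coefficient $[z^{2}]f^{d} = \tfrac{1}{2}(f^{d})''(0)$. The one non-formal ingredient is the elementary bound
\begin{equation*}
    \bigl| [z^{k}] f^{\ell} \bigr| \leq C_{k}(f)\, |f'(0)|^{\ell}, \qquad \ell \geq 1,\quad k \in \{1,2,3\}
\end{equation*}
(an equality for $k=1$), which follows either from the Koenigs linearization at the fixed point $0$ of $f$---available because $f$ is not a rotation, so that $|f'(0)| < 1$---or from a direct induction on $\ell$ using the chain rule for Taylor coefficients.

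For part~\eqref{lemma:item:FourFactorsCancellation}, after peeling off $f^{\min\{n_1,n_2\}}$ one of two things happens: either a bare factor $\overline{w}$ survives multiplying an analytic function that vanishes to order at least $3$ at the origin, so $I$ is its (vanishing) first Taylor coefficient; or, after one desintegration via \eqref{eq:FirstACMeasureMoment}, $I$ becomes $\overline{f'(0)}^{\,m}\,\ic f^{a} f^{b}\, dm$ with $a,b \geq 1$, which vanishes since $f^{a}f^{b}$ is analytic and zero at the origin. In both cases $I = 0$; the degenerate case $n_1 = n_2$ is immediate because then $f^{\varepsilon_1 n_1} f^{-\varepsilon_1 n_2} \equiv 1$ on $\circle$.

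For parts~\eqref{lemma:item:SecondFactorSquared} and~\eqref{lemma:item:FirstFactorSquared}, peeling off $f^{n_1}$ reduces $II$ to $\ic w^{\varepsilon_1}\bigl(f^{\varepsilon_2 m_2}(w)\bigr)^{2} f^{\varepsilon_3 m_3}(w)\, dm(w)$ and $III$ to $\ic w^{2\varepsilon_1} f^{\varepsilon_2 m_2}(w) f^{\varepsilon_3 m_3}(w)\, dm(w)$, with $m_2 = n_2 - n_1 \geq 1$ and $m_3 = n_3 - n_1 > m_2$. In the first case $w^{\varepsilon_1}$ has degree one, so one desintegration over the Aleksandrov--Clark measures of $f^{m_2}$ extracts a factor of modulus $|f'(0)|^{m_2}$ times $\ic \alpha^{j} f^{\varepsilon_3(m_3 - m_2)}(\alpha)\, dm(\alpha)$ with $j \in \{\pm1,\pm3\}$; this last integral is either $0$, or $\pm\overline{f'(0)}^{\,m_3-m_2}$, or a degree-three Taylor coefficient of $f^{m_3-m_2}$, and the coefficient bound yields $|II| \leq C(f)\, |f'(0)|^{m_2+(m_3-m_2)} = C(f)\, |f'(0)|^{n_3-n_1}$. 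In the second case $w^{2\varepsilon_1}$ has degree two, so the desintegration invokes \eqref{eq:SecondACMeasureMoment}, splitting $III$ into a term carrying $|f'(0)|^{2m_2}$ and a term carrying $[z^{2}]f^{m_2}$; combining with a leftover integral $\ic \alpha^{j'} f^{\varepsilon_3(m_3-m_2)}(\alpha)\, dm(\alpha)$ and the coefficient bound again gives $|III| \leq C(f)\, |f'(0)|^{n_3-n_1}$, except in the configuration $m_2 = 1$ and $m_3 - m_2 \leq 2$ (that is, $n_2 = n_1+1$ and $n_3 \leq n_2+2$), where the separations are too small for this bound to be informative and one instead uses the trivial estimate $|III| \leq \bigl\| (f^{n_1})^{2} f^{\varepsilon_2 n_2} f^{\varepsilon_3 n_3} \bigr\|_{L^{\infty}(\circle)} = 1$.

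Part~\eqref{lemma:item:FourArbitraryFactors} runs along the same lines with up to two desintegration steps. Peeling off $f^{n_1}$ and desintegrating over the Aleksandrov--Clark measures of $f^{m_2}$, $m_2 = n_2 - n_1$, extracts a factor of modulus $|f'(0)|^{n_2-n_1}$ and leaves $c\,\ic \alpha^{\varepsilon_1+\varepsilon_2}\, f^{\varepsilon_3 \tilde m_3}(\alpha)\, f^{\varepsilon_4 \tilde m_4}(\alpha)\, dm(\alpha)$ with $\tilde m_i = n_i - n_2$; when $\varepsilon_1\varepsilon_2 = -1$ the leftover integral is $\ic f^{\varepsilon_3 \tilde m_3}(\alpha)\, f^{\varepsilon_4 \tilde m_4}(\alpha)\, dm(\alpha)$, which by part~\eqref{lemma:item:IteratesCovariance} of Lemma~\ref{lemma:DeconjugateInnerFunction} is $0$ or $\pm\overline{f'(0)}^{\,n_4 - n_3}$, so that $|IV| \leq |f'(0)|^{(n_2-n_1)+(n_4-n_3)}$, while when $\varepsilon_1\varepsilon_2 = 1$ one desintegrates once more, over the Aleksandrov--Clark measures of $f^{\tilde m_3}$, and controls the resulting coefficients by the bound above. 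Tracking the exponents gives the factor $|f'(0)|^{(n_2-n_1)+(n_4-n_3)}$ when the gap $n_4 - n_3 > 2$ can be exploited, and otherwise the exponent $n_3 - n_1$ obtained by grouping the two close indices $n_3, n_4$ into a single squared factor and reducing to an integral of the type treated in~\eqref{lemma:item:SecondFactorSquared}--\eqref{lemma:item:FirstFactorSquared}, the remaining near-consecutive configurations being absorbed once more by the trivial bound $\leq 1$. Finally, the ``moreover'' is cleanest directly: if $\varepsilon_1\varepsilon_2 = \varepsilon_3\varepsilon_4 = -1$ the integrand pairs as $f^{n_1}\overline{f^{n_2}}\cdot f^{n_4}\overline{f^{n_3}}$ (or the conjugate pairing), so Lemma~\ref{lemma:IteratesProduct} together with part~\eqref{lemma:item:IteratesCovariance} of Lemma~\ref{lemma:DeconjugateInnerFunction} gives $|IV| = |f'(0)|^{n_2-n_1}\, |f'(0)|^{n_4-n_3}$ exactly. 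The main obstacle throughout is not any single computation but the bookkeeping: one must run through the $2^{4}$ sign patterns $\boldsymbol{\varepsilon}$, keep track at each desintegration of the monomial degree---hence of which of \eqref{eq:FirstACMeasureMoment}, \eqref{eq:SecondACMeasureMoment} applies---and isolate exactly those short-gap configurations in which the surviving Taylor coefficients are of too low a degree to supply the nominal geometric decay, so that the trivial $L^{\infty}$ bound must take over.
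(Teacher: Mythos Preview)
Your proposal is correct and follows essentially the same route as the paper's proof: peel off the smallest iterate via Lemma~\ref{lemma:DeconjugateInnerFunction}\eqref{lemma:item:DeconjugateInnerFunction}, apply the Aleksandrov Desintegration Theorem to the Aleksandrov--Clark measures of $f^{d}$ for the relevant gap $d$, evaluate the resulting moment by \eqref{eq:FirstACMeasureMoment} or \eqref{eq:SecondACMeasureMoment} according to the degree of the surviving monomial, and then bound the remaining integral by a low-degree Taylor coefficient of an iterate. The only cosmetic differences are that the paper handles the exceptional short-gap configurations by splitting into the explicit cases $n_3-n_2>2$ versus $n_3-n_2\le 2$ (and $n_2-n_1>1$ versus $n_2-n_1=1$) rather than invoking a uniform coefficient bound, and that for the ``moreover'' clause in \eqref{lemma:item:FourArbitraryFactors} the paper reads off the equality inside the $\varepsilon_2=-1$ branch of its own computation whereas you appeal to Lemma~\ref{lemma:IteratesProduct}; both are equivalent. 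One small caveat: your stated bound $|[z^k]f^{\ell}|\le C_k(f)\,|f'(0)|^{\ell}$ via Koenigs linearization needs $f'(0)\neq 0$, so when $f'(0)=0$ you should note separately (as the paper implicitly does) that the relevant coefficients vanish outright because $f^{\ell}$ then has a zero of order $\ge 2^{\ell}$ at the origin.
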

    \begin{proof}[Proof of Lemma~\ref{lemma:FourFactors}]
        Let $C$ denote a positive constant which may depend on the function $f$ but not on the indices $\{n_i\}$,
        whose value may change from line to line. 

        \eqref{lemma:item:FourFactorsCancellation}
        We can assume that $n_1 < n_2$.
        Part \eqref{lemma:item:DeconjugateInnerFunction} of Lemma \ref{lemma:DeconjugateInnerFunction} gives that 
        \begin{equation*}
            I = \ic z^{\varepsilon_1} f^{- \varepsilon_1 (n_2 - n_1)} (z)f^{ n_3 - n_1} (z)f^{ n_4 - n_1} (z)\, dm (z). 
        \end{equation*}
        Let $\{\mu_\alpha\colon \alpha \in \circle\}$ be the Aleksandrov-Clark measures of $f^{n_2 - n_1}$.
        The Aleksandrov Desintegration Theorem \eqref{eq:AleksandrovDesintegrationThm} gives 
        \begin{equation*}
            I = \ic\ic z^{\varepsilon_1} {\alpha}^{- \varepsilon_1} f^{ n_3 - n_2} (\alpha)f^{ n_4 - n_2} (\alpha)
            \, d\mu_\alpha(z)\, dm(\alpha). 
        \end{equation*}
        By \eqref{eq:FirstACMeasureMoment}
        \begin{equation*}
            \ic z^{\varepsilon_1}\, d\mu_\alpha(z) = a {\alpha}^{\varepsilon_1},
            \quad \alpha \in \circle, 
        \end{equation*}
        where $|a| = |f'(0)|^{n_2 - n_1}$.
        Since $f(0)=0$, we deduce  
        \begin{equation*}
            |I| = |f'(0)|^{n_2 - n_1} \left|\ic f^{ n_3 - n_2} (\alpha) f^{ n_4 - n_2} (\alpha)\, dm(\alpha)\right| = 0.
        \end{equation*}

        \eqref{lemma:item:SecondFactorSquared}
        We can assume $\varepsilon_1 = 1$.
        Part \eqref{lemma:item:DeconjugateInnerFunction} of Lemma \ref{lemma:DeconjugateInnerFunction} gives that 
        \begin{equation*}
            II = \ic z (f^{ \varepsilon_2 (n_2 - n_1)} (z))^2 f^{\varepsilon_3 ( n_3 - n_1)} (z)\, dm (z). 
        \end{equation*}
        Let $\{\mu_\alpha\colon \alpha \in \circle\}$ be the Aleksandrov-Clark measures of $f^{n_2 - n_1}$.
        The Aleksandrov Desintegration Theorem \eqref{eq:AleksandrovDesintegrationThm} gives 
        \begin{equation*}
            II = \ic\ic z {\alpha}^{2 \varepsilon_2} f^{ \varepsilon_3 (n_3 - n_2)} (\alpha)\, d\mu_\alpha(z)\, dm(\alpha). 
        \end{equation*}
        By \eqref{eq:FirstACMeasureMoment}
        \begin{equation*}
            \ic z\, d\mu_\alpha(z) = {\overline{f'(0)}}^{n_2 - n_1} \alpha,
            \quad \alpha \in \circle .  
        \end{equation*}
        Hence 
        \begin{equation*}
            II = { \overline{f'(0)}}^{n_2 - n_1} \ic {\alpha}^{1 + 2 \varepsilon_2} f^{ \varepsilon_3 (n_3 - n_2)} (\alpha)\, dm (\alpha)
        \end{equation*}
        Since $1 + 2 \varepsilon_2 \leq 3$, the modulus of last integral is bounded by $C|f'(0)|^{n_3 - n_2}$
        if $n_3 - n_2>2$ and by $1$ otherwise.
        This proves \eqref{lemma:item:SecondFactorSquared}. 

        \eqref{lemma:item:FirstFactorSquared}
        We can assume $\varepsilon_1 = 1$.
        Applying part \eqref{lemma:item:DeconjugateInnerFunction} of Lemma \ref{lemma:DeconjugateInnerFunction}
        and Aleksandrov Desintegration Theorem as before, we have 
        \begin{equation*}
            III = \ic \ic z^2 {\alpha}^{ \varepsilon_2} f^{ \varepsilon_3 (n_3 - n_2)} (\alpha)\, d\mu_\alpha(z)\, dm(\alpha), 
        \end{equation*}
        where $\{\mu_\alpha\colon \alpha \in \circle\}$ are the Aleksandrov-Clark measures of $g= f^{n_2 - n_1}$.
        Applying  \eqref{eq:SecondACMeasureMoment}, we obtain 
        % \begin{equation*}
        %     \ic z^2 d \mu_\alpha (z)= \overline{  \frac{g''(0)}{2}} \alpha + \overline{g'(0)^2} {\alpha}^2 , \quad \alpha \in \circle .  
        % \end{equation*}
        % Hence 
        \begin{equation*}
            III =  \overline{\frac{g''(0)}{2}} \ic {\alpha}^{1 +  \varepsilon_2} f^{ \varepsilon_3 (n_3 - n_2)} (\alpha)\, dm (\alpha)
            +  \overline{g'(0)^2} \ic {\alpha}^{2 +  \varepsilon_2} f^{ \varepsilon_3 (n_3 - n_2)} (\alpha)\, dm (\alpha). 
        \end{equation*}
        Since $2 +  \varepsilon_2 \leq 3$, both integrals are bounded by $C|f'(0)|^{n_3 - n_2}$ if $n_3 - n_2>2$,
        and by $1$ if $n_3 - n_2 \leq 2$.
        If $n_2 - n_1 >1$, we have that $|g''(0)|/2 + |g'(0)^2| \leq C |f'(0)|^{n_2 - n_1}$.
        If $n_2 - n_1 =1$, we have that $|g''(0)|/2 + |\overline{g'(0)^2}| \leq 2$. 
        This proves \eqref{lemma:item:FirstFactorSquared}. 

        \eqref{lemma:item:FourArbitraryFactors}
        We can assume $\varepsilon_1 = 1$.
        Arguing as before we have
        \begin{equation*}
            IV =
            {\overline{f'(0)}}^{n_2 - n_1} \ic {\alpha}^{1 +\varepsilon_2} f^{\varepsilon_3 (n_3 - n_2)}(\alpha) f^{\varepsilon_4 (n_4 - n_2)}(\alpha)
            \,dm(\alpha)
        \end{equation*}
        If $ \varepsilon_2 = -1$, we repeat the argument and prove that $|IV| \leq |f'(0)|^{n_2 - n_1 + n_4 - n_3}$.
        Moreover if $\varepsilon_2 = -1$ and if $\varepsilon_3 \varepsilon_4 = -1 $,
        we have $|IV| = |f'(0)|^{n_2 - n_1 + n_4 - n_3}$, as stated in the last part of \eqref{lemma:item:FourArbitraryFactors}.
        If $\varepsilon_2 = 1$, let $\{\mu_\alpha\colon \alpha \in \circle\}$
        be the Aleksandrov-Clark measures of $g= f^{n_3 - n_2}$.
        The Aleksandrov Desintegration Theorem \eqref{eq:AleksandrovDesintegrationThm} gives that last integral can be written as  
        \begin{equation}
            \label{eq:TwiceADT}
            \ic \ic z^{2} {\alpha}^{ \varepsilon_3} f^{ \varepsilon_4 (n_4 - n_3)} (\alpha)\, d\mu_\alpha(z)\, dm(\alpha). 
        \end{equation}
        By \eqref{eq:SecondACMeasureMoment}
        \begin{equation*}
            \ic z^2\, d\mu_\alpha(z) = \overline{\frac{g''(0)}{2}} \alpha + \overline{g'(0)^2} {\alpha}^2,
            \quad \alpha \in \circle .  
        \end{equation*}
        Hence the double integral in \eqref{eq:TwiceADT} can be written as  
        \begin{equation*}
            {\overline{\frac{g''(0)}{2}}} \ic {\alpha}^{1 +  \varepsilon_3} f^{ \varepsilon_4 (n_4 - n_3)} (\alpha)\, dm(\alpha)
            + \overline{g'(0)^2} \ic {\alpha}^{2 +  \varepsilon_3} f^{ \varepsilon_4 (n_4 - n_3)} (\alpha)\, dm (\alpha). 
        \end{equation*}
        Since $2 + \varepsilon_3 \leq 3$, both integrals are bounded by $C|f'(0)|^{n_4 - n_3}$ if $n_4 - n_3>2$,
        and by $1$ if $n_4 - n_3 \leq 2$.
        If $n_3 - n_2 >1$, we have that $|g''(0)|/2 + |g'(0)^2| \leq C |f'(0)|^{n_3 - n_2}$. 
        % Actually 
        % $$|g''(0)| = |f''(0)| |f'(0)|^{n_3 - n_2 -1} |\sum_{j=0}^{n_3-n_2 -1} f'(0)^{j}|$$ 
        % and $|g'(0)|= |f'(0)|^{n_3 - n_2}$.  
        If $n_3 - n_2 =1$, we just use the trivial estimate $|g''(0)|/2 + |g'(0)^2| \leq 2$.
        This proves \eqref{lemma:item:FourArbitraryFactors}.
    \end{proof}

    We will now prove an elementary auxiliary result which will be used several times. 
    \begin{lemma}
        \label{lemma:AuxiliaryBound}
        Let $\A$ be a collection of positive integers and let $\{a_n \}$ be a sequence of complex numbers.
        Fix $\lambda \in \C$ with $|\lambda| < 1$.
        Then  
        \begin{equation*}
            \Bigg|\sum_{n, k \in \A , k>n} \overline{a_n } a_k {\lambda}^{k-n}\Bigg|
            \leq \frac{|\lambda|}{1- |\lambda|} \sum_{n \in \A} |a_n|^2 .   
        \end{equation*}
    \end{lemma}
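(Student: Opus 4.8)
The plan is a completely elementary argument: replace the sum by the sum of absolute values, symmetrize via the arithmetic--geometric mean inequality, and then evaluate two geometric series. First I would observe that if $\sum_{n \in \A} |a_n|^2 = \infty$ the inequality is trivial, so we may assume $\sum_{n \in \A} |a_n|^2 < \infty$; then all the sums below are absolutely convergent and the rearrangements are justified by Tonelli's theorem applied to the nonnegative double series. By the triangle inequality and $|\lambda^{k-n}| = |\lambda|^{k-n}$,
\begin{equation*}
    \Bigg|\sum_{n,k \in \A,\, k>n} \overline{a_n} a_k \lambda^{k-n}\Bigg|
    \leq \sum_{n,k \in \A,\, k>n} |a_n|\,|a_k|\,|\lambda|^{k-n}.
\end{equation*}

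Next I would apply $|a_n|\,|a_k| \leq \tfrac12\big(|a_n|^2 + |a_k|^2\big)$ and split the resulting double sum into the two pieces coming from $|a_n|^2$ and from $|a_k|^2$. In the first piece I fix $n$ and sum over $k \in \A$ with $k > n$; since the exponents $k-n$ run through distinct positive integers,
\begin{equation*}
    \sum_{k \in \A,\, k > n} |\lambda|^{k-n} \leq \sum_{d=1}^{\infty} |\lambda|^{d} = \frac{|\lambda|}{1-|\lambda|}.
\end{equation*}
In the second piece I fix $k$ and sum over $n \in \A$ with $n < k$, obtaining the same bound $|\lambda|/(1-|\lambda|)$ by the identical geometric-series estimate. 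Summing the first piece over $n \in \A$ and the second over $k \in \A$ then gives
\begin{equation*}
    \sum_{n,k \in \A,\, k>n} |a_n|\,|a_k|\,|\lambda|^{k-n}
    \leq \frac{1}{2}\cdot\frac{|\lambda|}{1-|\lambda|}\sum_{n \in \A} |a_n|^2 + \frac{1}{2}\cdot\frac{|\lambda|}{1-|\lambda|}\sum_{k \in \A} |a_k|^2
    = \frac{|\lambda|}{1-|\lambda|}\sum_{n \in \A} |a_n|^2,
\end{equation*}
which is the desired inequality.

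There is essentially no obstacle here: the only point that requires a moment's care is the bookkeeping in the two-sided count, namely that after symmetrizing one must sum the geometric factor in the ``forward'' variable $k$ for the $|a_n|^2$ term and in the ``backward'' variable $n$ for the $|a_k|^2$ term, each time using that the relevant differences are distinct positive integers so that the partial geometric sum is dominated by $\sum_{d \geq 1} |\lambda|^d$. Everything else is routine.
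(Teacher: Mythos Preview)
Your proof is correct and is essentially the same elementary argument as the paper's: both reduce the estimate to the geometric series $\sum_{j\ge 1}|\lambda|^j=|\lambda|/(1-|\lambda|)$. The only cosmetic difference is that the paper first reindexes by $j=k-n$ and bounds each inner sum $\bigl|\sum_{n,\,n+j\in\A}\overline{a_n}a_{n+j}\bigr|\le \sum_{n\in\A}|a_n|^2$ via Cauchy--Schwarz, whereas you apply the pointwise AM--GM inequality $|a_n||a_k|\le\tfrac12(|a_n|^2+|a_k|^2)$ and then sum the geometric factor in the free variable; both routes are equally short and yield the identical constant.
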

    \begin{proof}[Proof of Lemma~\ref{lemma:AuxiliaryBound}]
        Writing $j=k-n$ we have that 
        \begin{equation*}
            \sum_{n, k \in \A , k>n} \overline{a_n } a_k {\lambda}^{k-n}
            = \sum_{j>0} {\lambda}^j \sum_{n, n+j \in \A} \overline{a_n } a_{n+j}, 
        \end{equation*}
        where the last sum is taken over all indices $n \in \A$ such that $n+j \in \A$.
        It is also understood that this sum vanishes if there is no $n \in \A$ such that $n+j \in \A$.
        By Cauchy-Schwarz's inequality, 
        \begin{equation*}
            \left|\sum_{n, n+j \in \A} \overline{a_n } a_{n+j}\right|
            \leq \sum_{n \in \A} |a_n|^2. 
        \end{equation*}
        This finishes the proof.
    \end{proof}

    Let $H^2$ be the Hardy space of analytic functions in the unit disc $g(w) = \sum_{n \geq 0} a_n w^n$,  $w \in \D$,
    such that
    \begin{equation*}
        \|g\|_2^2 = \sup_{r<1} \ic |g(rz)|^2\, dm(z) = \sum_{n=0}^\infty  |a_n|^2 < \infty.
    \end{equation*}
    Any function $g \in H^2$ has a finite radial limit $g(z)= \lim_{r \to 1} g(rz)$ at almost every $z \in \circle$ and 
    \begin{equation*}
        \|g\|_2^2 = \ic |g(z)|^2\, dm(z). 
    \end{equation*}
    See \cite{ref:GarnettBoundedAnalyticFunctions}.
    For $0<p<\infty$ let $\|g\|_p$ denote the $L^p$ norm on the unit circle of the function $g$.
    Next result provides estimates of the $L^2$ and $L^4$ norms of linear combinations of iterates of an inner function.
    It will be applied to finite linear combinations.
    For $t, z \in \C$, let $\langle t, z  \rangle = \Real (\overline{t} z)$ be the standard scalar product in the plane. 

    \begin{theorem}
        \label{thm:L2L4NormEstimates}
        Let $f$ be an inner function with $f(0)=0$ which is not a rotation
        and let $\{a_n \}$ be a sequence of complex numbers with $\sum_n |a_n|^2 < \infty$.
        Consider 
        \begin{equation*}
            \xi = \sum_{n=1}^\infty a_n f^n
        \end{equation*}
        and 
        \begin{equation*}
            {\sigma}^2  = \sum_{n=1}^\infty |a_n|^2 + 2 \Real \sum_{k=1}^{\infty} f'(0)^k \sum_{n=1}^\infty \overline{a_n} a_{n+k} . 
        \end{equation*}
        
        \begin{enumerate}[(a)]
            \item
            \label{thm:item:L2NormEstimate}
            We have $\|\xi \|_2^2 ={\sigma}^2  $ and 
            \begin{equation*}
                C^{-1} \sum_{n=1}^\infty |a_n|^2 \leq {\sigma}^2 \leq  C \sum_{n=1}^\infty |a_n|^2 , 
            \end{equation*}
            where $C= (1+|f'(0)|)(1- |f'(0)|)^{-1}$.
            
            \item
            \label{thm:item:ScalarProduct}
            For any $t \in \C$ we have
            \begin{equation*}
                \ic \langle t, \xi  \rangle^2\, dm = \frac{1}{2} |t|^2 {\sigma}^2. 
            \end{equation*}
            
            \item
            \label{thm:item:L4NormEstimate}
            There exists a constant $C=C(f) >0$ independent of the sequence $\{a_n\}$,
            such that $\|\xi\|_4 \leq C \|\xi\|_2$.
        \end{enumerate}
    \end{theorem}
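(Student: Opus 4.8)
The plan is to establish the three parts in order, reducing everything to Lemmas~\ref{lemma:DeconjugateInnerFunction}, \ref{lemma:AuxiliaryBound} and~\ref{lemma:FourFactors}. Throughout put $\lambda = f'(0)$; since $f$ is inner and not a rotation, the Schwarz lemma gives $|\lambda|<1$, so Lemma~\ref{lemma:AuxiliaryBound} applies with this $\lambda$. I would work first with the partial sums $\xi_N = \sum_{n=1}^N a_n f^n$ (bounded functions) and pass to the limit at the end: expanding $\|\xi_N-\xi_M\|_2^2$ for $M<N$ and bounding the cross terms with Lemma~\ref{lemma:AuxiliaryBound} (applied to $\A=\{M+1,\dots,N\}$) gives $\|\xi_N-\xi_M\|_2^2 \le (1+|\lambda|)(1-|\lambda|)^{-1}\sum_{M<n\le N}|a_n|^2$, so $\{\xi_N\}$ is Cauchy in $L^2$ and $\xi\in H^2$.

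For part~\eqref{thm:item:L2NormEstimate} I would expand $\|\xi_N\|_2^2 = \sum_{n,j\le N} a_j\overline{a_n}\ic f^j\overline{f^n}\,dm$ and evaluate each integral by Lemma~\ref{lemma:DeconjugateInnerFunction}\eqref{lemma:item:IteratesCovariance}: the diagonal gives $\sum_{n\le N}|a_n|^2$, and the terms with $j>n$ (together with their conjugates, coming from $j<n$) reassemble, on setting $k=|j-n|$, into $2\Real\sum_{k\ge1}\lambda^k\sum_n\overline{a_n}a_{n+k}$. Letting $N\to\infty$ yields $\|\xi\|_2^2=\sigma^2$. The upper bound $\sigma^2\le C\sum|a_n|^2$ is then immediate from Lemma~\ref{lemma:AuxiliaryBound}, since $1+2|\lambda|(1-|\lambda|)^{-1}=C$. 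For the lower bound I would use Lemma~\ref{lemma:DeconjugateInnerFunction}\eqref{lemma:item:IteratesCovariance} once more to recognize $\sigma^2 = \ic |A(z)|^2 P(z)\,dm(z)$, where $A(z)=\sum_n a_n z^n$ and $P(z)=(1-|\lambda|^2)|z-\lambda|^{-2}$ is the Poisson kernel of $\D$ at $\lambda$ (one checks that $\ic z^m P\,dm$ equals $\lambda^m$ for $m\ge0$ and $\overline{\lambda}^{\,|m|}$ for $m<0$, exactly the covariances supplied by Lemma~\ref{lemma:DeconjugateInnerFunction}\eqref{lemma:item:IteratesCovariance}). Since $P(z)\ge (1-|\lambda|^2)(1+|\lambda|)^{-2}=C^{-1}$ on $\circle$ and $\ic|A|^2\,dm=\sum|a_n|^2$, the lower bound follows; in fact both inequalities follow at once from this representation.

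For part~\eqref{thm:item:ScalarProduct}, use $\langle t,\xi\rangle^2=\tfrac12\Real(\overline{t}^2\xi^2)+\tfrac12|t|^2|\xi|^2$; integrating and invoking part~\eqref{thm:item:L2NormEstimate} reduces the claim to $\ic\xi^2\,dm=0$. For this, note $\xi_N^2=\sum_{n,j\le N}a_na_j f^nf^j$, and for $n\le j$ the invariance of $m$ under $f^n$ (Lemma~\ref{lemma:DeconjugateInnerFunction}\eqref{lemma:item:DeconjugateInnerFunction}, applied to the inner function $f^n$) gives $\ic f^nf^j\,dm=\ic z\,f^{j-n}(z)\,dm(z)$, which vanishes because $f^{j-n}$ is analytic and vanishes at the origin, so $z f^{j-n}(z)$ has no constant Taylor coefficient. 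Passing to the limit (using $\xi_N\to\xi$ in $L^2$, hence $\xi_N^2\to\xi^2$ in $L^1$) gives $\ic\xi^2\,dm=0$.

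Part~\eqref{thm:item:L4NormEstimate} is the main point and the only real difficulty. Expanding the fourth power, $\|\xi_N\|_4^4 = \sum a_{n_1}\overline{a_{j_1}}a_{n_2}\overline{a_{j_2}}\ic f^{n_1}f^{-j_1}f^{n_2}f^{-j_2}\,dm$, the sum being over $n_1,n_2,j_1,j_2\le N$, I would split it according to the pattern of coincidences among the four indices and the order in which $n_1,n_2$ (with sign $+1$) and $j_1,j_2$ (with sign $-1$) fall. When a ``$+$''-index meets a ``$-$''-index the factor $|f^m|^2\equiv1$ drops out and the integral is a two-factor one, evaluated by Lemma~\ref{lemma:DeconjugateInnerFunction}\eqref{lemma:item:IteratesCovariance}; when two like-signed indices coincide one is in case~\eqref{lemma:item:SecondFactorSquared} or~\eqref{lemma:item:FirstFactorSquared} of Lemma~\ref{lemma:FourFactors} (the remaining sub-configurations, with the repeated index largest, being covered by the same Aleksandrov-disintegration argument, with an analogous bound); when all four indices are distinct one is in case~\eqref{lemma:item:FourFactorsCancellation} (giving $0$) or~\eqref{lemma:item:FourArbitraryFactors}. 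In every case the integral is bounded by $C|\lambda|^{g}$ with $g$ a sum of gaps between consecutive sorted indices. Summing the resulting geometric series in the gap variables and applying Cauchy--Schwarz (as in Lemma~\ref{lemma:AuxiliaryBound}) bounds each group of terms by $C\big(\sum|a_n|^2\big)^2$; the two ``fully diagonal'' groups $n_1=j_1,\ n_2=j_2$ and $n_1=j_2,\ n_2=j_1$ already contribute $\big(\sum|a_n|^2\big)^2$ apiece. Hence $\|\xi_N\|_4^4\le C\big(\sum|a_n|^2\big)^2\le C'\sigma^4=C'\|\xi_N\|_2^4$ by part~\eqref{thm:item:L2NormEstimate}, and since $\xi_N\to\xi$ in $L^2$, hence in measure, Fatou's lemma gives $\|\xi\|_4\le\liminf_N\|\xi_N\|_4\le (C')^{1/4}\|\xi\|_2$. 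The real work is the bookkeeping of the index configurations and checking that each geometric sum is dominated by $\sum|a_n|^2$ and not by something larger; the exponential decay furnished by Lemma~\ref{lemma:FourFactors} is exactly what makes this go through.
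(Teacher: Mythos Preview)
Your argument is correct and, for parts~\eqref{thm:item:L2NormEstimate} and~\eqref{thm:item:ScalarProduct}, essentially the same as the paper's. The Poisson-kernel representation $\sigma^2=\ic|A(z)|^2P(z)\,dm$ that you use for the two-sided bound is exactly the paper's argument in different clothing: the paper views $\sigma^2$ as a Toeplitz quadratic form and appeals to the fact that the spectrum of a Toeplitz matrix lies between $\inf s$ and $\sup s$, where the symbol $s$ is precisely your Poisson kernel $P$. Your formulation has the minor advantage of not invoking Toeplitz spectral theory at all.

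For part~\eqref{thm:item:L4NormEstimate} the paper takes a more structured route than your direct quadruple-sum expansion. Writing $|\xi|^2=\sum_n|a_n|^2+2\Real h$ with $h=\sum_{k>n}\overline{a_n}a_k f^k\overline{f^n}$, it squares to get $|\xi|^4=(\sum|a_n|^2)^2+4(\sum|a_n|^2)\Real h+4(\Real h)^2$, so that after the trivial bound $(\Real h)^2\le|h|^2$ everything reduces to estimating $\ic|h|^2\,dm$. This is then organised as $\ic|h|^2\,dm=A+2\Real B$ via $h=\sum_n c_n$ with $c_n=\overline{a_n}\sum_{k>n}a_k f^k\overline{f^n}$; the diagonal $A$ is handled by part~\eqref{thm:item:L2NormEstimate}, and $B$ is split into five explicit geometric sums $B_1,\dots,B_5$, each bounded by Lemma~\ref{lemma:AuxiliaryBound}. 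The payoff of this organisation is that the only four-factor integrals that actually appear are $\ic f^n\overline{f^k}\,\overline{f^j}f^l\,dm$ with $n<k$, $n<j<l$, so one only needs the exact identity in the last line of Lemma~\ref{lemma:FourFactors}\eqref{lemma:item:FourArbitraryFactors} (when all four are distinct) and case~\eqref{lemma:item:SecondFactorSquared} (when $k=j$); the ``repeated index is largest'' configuration you flag never occurs, and no extension of Lemma~\ref{lemma:FourFactors} is needed. Your brute-force expansion works too, but requires the extra case and a longer case analysis; the paper's $h$-decomposition buys cleaner bookkeeping at no cost.
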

    \begin{proof}[Proof of Theorem~\ref{thm:L2L4NormEstimates}]
        At almost every point of the unit circle we have 
        \begin{equation}
            \label{eq:ModulusSquaredIdentity}
            |\xi |^2 = \sum_{ n=1}^\infty |a_n|^2 + 2 \Real h,  
        \end{equation}
        where 
        \begin{equation}
            \label{eq:XiCrossProducts}
            h = \sum_{n,k \geq 1 , k>n } \overline{a_n} a_{k} f^k \overline{f^n}.
        \end{equation}
        Part \eqref{lemma:item:IteratesCovariance} of Lemma \ref{lemma:DeconjugateInnerFunction} gives
        \begin{equation*}
            \|\xi\|_2^2 = \sum_{ n=1}^\infty |a_n|^2 + 2 \Real \sum_{n,k \geq 1 , k>n } \overline{a_n} a_{k} f'(0)^{k-n},
        \end{equation*}
        which is the identity in \eqref{thm:item:L2NormEstimate}.
        Next we prove the estimate in \eqref{thm:item:L2NormEstimate}.
        Part \eqref{lemma:item:IteratesCovariance} of Lemma \ref{lemma:DeconjugateInnerFunction} gives that 
        \begin{equation*}
            \ic \overline{f^k} f^n\, dm = b_{n,k},  
        \end{equation*}
        where $b_{n,k} = f'(0)^{n-k}$ if $n \geq k$ and $b_{n,k} = {\overline{f'(0)}}^{k-n}$ if $n < k$.
        Hence
        \begin{equation*}
            \bigg\|\sum_n a_n f^n\bigg\|_2^2 = \sum_{n,k} a_n \overline{a_k} b_{n,k}.   
        \end{equation*}
        Consider the Toeplitz matrix $T$ whose entries are $b_{n,k} = b_{n-k, 0}$, $n,k=1,2,\ldots$ and its symbol
        \begin{equation*}
            s(z) = \sum_{n= -\infty}^\infty  b_{n,0} z^n, \quad z \in \circle. 
        \end{equation*}
        It is well known that $T$ diagonalizes and its eigenvalues are contained
        in the interval in the real line whose endpoints are the essential infimum
        and the essential supremum of $s$.
        See \cite{ref:BottcherGrudskyToeplitzMatrices}.
        Since 
        \begin{equation*}
            s(z)= \frac{1- |f'(0)|^2}{|1- \overline{f'(0)} z|^2}, \quad z \in \circle , 
        \end{equation*}
        the eigenvalues of $T$ are between $C^{-1}$ and $C$.
        This finishes the proof of part \eqref{thm:item:L2NormEstimate}. 
        % The estimate in (a) follows from Lemma \ref{hilbert}. 
        Since $f(0)=0$, the mean value property gives that 
        \begin{equation*}
            \ic {\xi}^2\, dm =0
        \end{equation*}
        and \eqref{thm:item:ScalarProduct} follows.
        We now prove \eqref{thm:item:L4NormEstimate}.
        Let $C(f)$ denote a positive constant only depending on $f$ whose value may change from line to line.
        The identity \eqref{eq:ModulusSquaredIdentity} gives that at almost every point of the unit circle, we have
        \begin{equation*}
            |\xi|^4 = \left(\sum_{n=1}^\infty |a_n|^2\right)^2 + 4 \Real h \sum_{n=1}^\infty |a_n|^2 + 4 (\Real h)^2,  
        \end{equation*}
        where $h$ is defined in \eqref{eq:XiCrossProducts}.  
        % \begin{equation*}
        %      h =  \sum_{ n,k \geq 1 , k>n} \overline{a_n} a_{k} f^k \overline{f^n} .  
        % \end{equation*}
        Observe that 
        \begin{equation*}
            \ic h\, dm = \sum_{n,k \geq 1 , k>n } \overline{a_n} a_{k} f'(0)^{k-n}   .  
        \end{equation*}
        Hence Lemma \ref{lemma:AuxiliaryBound} gives that
        \begin{equation}
            \label{eq:XiCrossProductsModulus}
            \left|\ic h\, dm\right| \leq \frac{|f'(0)|}{1- |f'(0)|} \sum_{n=1}^\infty |a_n|^2 .  
        \end{equation}
        Next we will prove that there exists a constant $C=C(f) > 0$ such that
        \begin{equation}
            \label{eq:XiCrossProductsModulusSquared}
            \ic |h|^2\, dm \leq C \left( \sum_{n=1}^\infty |a_n|^2 \right)^2 .  
        \end{equation}
        Observe that \eqref{eq:XiCrossProductsModulus} and \eqref{eq:XiCrossProductsModulusSquared}
        give the estimate in \eqref{thm:item:L4NormEstimate}.
        Write 
        \begin{equation*}
            c_n = \overline{a_n } \sum_{k>n} a_k f^k \overline{f^n} . 
        \end{equation*}
        Using the elementary  identity 
        \begin{equation*}
            \Bigg|\sum_{n=1}^\infty c_n\Bigg|^2
            = \sum_{n=1}^\infty |c_n|^2 + 2 \Real  \sum_{n=1}^\infty  \overline{c_n} \sum_{j >n} c_j , 
        \end{equation*}
        we can write
        \begin{equation*}
            \ic |h|^2\, dm = A + 2 \Real B , 
        \end{equation*}
        where
        \begin{equation*}
            A = \sum_{n=1}^\infty |a_n|^2 \ic \Bigg|\sum_{k>n} a_k f^k\Bigg|^2\, dm
        \end{equation*}
        and 
        \begin{equation}
            \label{eq:BTerms}
            B = \sum_{n=1}^\infty a_n \sum_{k>n } \overline{a_k}  \sum_{j>n} \overline{a_j}  \sum_{l>j} a_l
            \ic \overline{f^k} f^n \overline{f^j} f^l\, dm. 
        \end{equation}
        By part \eqref{thm:item:L2NormEstimate} we have
        \begin{equation*}
            \ic \Bigg|\sum_{k >n } a_k f^k\Bigg|^2\, dm \leq C(f) \sum_{k >n } |a_k|^2
        \end{equation*}
        and we deduce that $A \leq C(f) \left(\sum_{n} |a_n|^2\right)^2$.
        We now estimate $B$.
        If $n<k$ and $n<j<l$, we have
        \begin{equation*}
            \left|\ic  f^n \overline{f^k} \overline{f^j} f^l\, dm\right| = |f'(0)|^{r-n + |l-s|}, 
        \end{equation*}
        where $r= \min \{k,j\}$ and $s= \max \{k,j \}$.
        This estimate follows from last statement in part \eqref{lemma:item:FourArbitraryFactors}
        of Lemma \ref{lemma:FourFactors}.
        Part \eqref{lemma:item:SecondFactorSquared} of Lemma \ref{lemma:FourFactors} gives that 
        \begin{equation*}
            \left|\ic  f^n (\overline{f^k})^2 f^l\, dm\right| \leq C(f) |f'(0)|^ { l-n}, 
        \end{equation*}
        if $n<k<l$.
        The sum over $j> n$ in \eqref{eq:BTerms} will be splited in three terms corresponding to $j > k$, $j=k$ and $j<k$.
        Then $|B|\leq C(f)(B_1 + B_2 + B_3)$ where
        \begin{equation*}
            B_1 = \sum_{n \geq 1} |a_n| \sum_{k>n } |a_k| \sum_{j>k } |a_j|  \sum_{l>j} |a_l| |f'(0)|^{k-n+l-j} , 
        \end{equation*}
        \begin{equation*}
            B_2 =  \sum_{n \geq 1} |a_n|   \sum_{k>n} |a_k|^2   \sum_{l>k } |a_l| |f'(0)|^{l-n}, 
        \end{equation*}
        \begin{equation*}
            B_3 =  \sum_{n \geq 1} |a_n |  \sum_{k>n } |a_k|  \sum_{n<j<k} |a_j|  \sum_{l>j } |a_l|  |f'(0)|^{j-n + |l-k|}. 
        \end{equation*}
        Observe that
        \begin{equation*}
            B_1 =  \sum_{n \geq 1} |a_n|   \sum_{k>n} |a_k|  |f'(0)|^{k-n} \sum_{j>k } |a_j|  \sum_{l>j } |a_l| |f'(0)|^{l-j} . 
        \end{equation*}
        Applying Lemma \ref{lemma:AuxiliaryBound} we deduce that $B_1 \leq C(f) \left( \sum_{n \geq 1} |a_n|^2 \right)^2$.
        Similarly
        \begin{equation*}
            B_2 \leq  \left( \sum_{k \geq 1 } |a_k|^2 \right)   \sum_{n \geq 1 } |a_n|   \sum_{l>n } |a_l| |f'(0)|^{l-n}, 
        \end{equation*}
        which again by Lemma \ref{lemma:AuxiliaryBound} is bounded by $C(f) \left( \sum_{k \geq 1} |a_k|^2 \right)^2$.
        Finally 
        \begin{equation*}
            B_3 =  \sum_{n \geq 1} |a_n |  \sum_{k>n } |a_k|  \sum_{n < j<k} |a_j| |f'(0)|^{j-n}
            \left(\sum_{l >k } |a_l|  |f'(0)|^{ l-k} + \sum_{j<l \leq k} |a_l|  |f'(0)|^{ k-l} \right). 
        \end{equation*}
        Using the trivial estimate
        \begin{equation*}
            \sum_{n<j<k} |a_j| |f'(0)|^{j-n} \leq  \sum_{j>n } |a_j| |f'(0)|^{j-n}, 
        \end{equation*}
        we deduce that $B_3 \leq B_4 + B_5$ where
        \begin{equation*}
            B_4 =  \sum_{n \geq 1} |a_n | \sum_{j>n } |a_j| |f'(0)|^{j-n} \sum_{k>n } |a_k|  \sum_{l>k } |a_l|  |f'(0)|^{ l-k}  
        \end{equation*}
        and
        \begin{equation*}
            B_5 =  \sum_{n \geq 1} |a_n | \sum_{j>n } |a_j| |f'(0)|^{j-n} \sum_{k >n } |a_k|  \sum_{n<l \leq k} |a_l|  |f'(0)|^{ k-l}. 
        \end{equation*}
        Applying Lemma \ref{lemma:AuxiliaryBound} we have $B_4 \leq C(f) \left( \sum_{n \geq 1} |a_n|^2 \right)^2$.
        Writing $t=k-l$ we have
        \begin{equation*}
            \sum_{k > n} |a_k|  \sum_{ l<k} |a_l|  |f'(0)|^{ k-l}
            \leq \sum_{t \geq 1} |f'(0)|^t \sum_{l \geq 1 } |a_l||a_{l+t}|
            \leq \frac{|f'(0)|}{1- |f'(0)|} \sum_{n \geq 1 } |a_n|^2 . 
        \end{equation*}
        We deduce that $B_5 \leq C(f)  \left(\sum_{n \geq 1} |a_n|^2\right)^2$.
        This finishes the proof.
    \end{proof}

    \section{Higher order correlations}
    \label{sec:HigherOrderCorrel}
    Next we will use Aleksandrov-Clark measures to estimate certain integrals
    which will appear in the proof of Theorem \ref{thm:MainTheorem}.
    The main result of this Section is Theorem \ref{thm:HigherCorrelations}.
    We start giving bounds for the size of the iterates $f^n$ and of their derivatives at the origin.

    \begin{lemma}
        \label{lemma:IteratesSizeBound}
        Let $f$ be an analytic mapping from the unit disc into itself with $f(0) = 0$ and $0 < |f'(0)| < 1.$
        Then, there exists an integer $d = d(f) > 0$ such that
        \begin{equation*}
            |f^n(w)| < |f'(0)|^n (1-|w|)^{-d}, \quad w \in \D,
        \end{equation*}
        for every $n \geq 1.$
    \end{lemma}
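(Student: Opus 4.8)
The plan is to exploit the Schwarz–Pick lemma together with the Koebe-type distortion of the iterates near the fixed point $0$. Write $\lambda = f'(0)$, so $0 < |\lambda| < 1$. First I would record the two basic facts that drive everything: by the Schwarz lemma $|f(w)| \le |w|$ for all $w \in \D$, and by Schwarz–Pick $f$ contracts the hyperbolic metric, so in particular $|f(w)| \le |w|$ with strict contraction governed by the hyperbolic distance to $0$. Composing, $|f^n(w)| \le |w|$ for every $n$, which already handles the region where $|w|$ is bounded away from $1$ in a crude way but does not by itself produce the factor $|\lambda|^n$. For that I need to compare $f^n$ near the origin with its linear part.

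The key step is a quantitative version of the statement that $f^n(w)/\lambda^n$ stays controlled. Fix $0 < \rho < 1$ small enough that on $\{|w| \le \rho\}$ one has $|f(w)| \le c|w|$ for some $c$ with $|\lambda| < c < 1$; such $\rho$ and $c$ exist because $f(w) = \lambda w + O(w^2)$. Then for $|w| \le \rho$ we get $|f^n(w)| \le c^n |w| \le c^n$, which is already exponentially small but with base $c$ rather than $|\lambda|$. To sharpen $c$ down to $|\lambda|$ I would instead apply the Schwarz lemma to the function $g(w) = f(w)/(\lambda w)$ suitably — more precisely, use the standard fact (a consequence of the Schwarz lemma applied to $f(w)/w$, which maps $\D$ into $\overline{\D}$ and takes the value $\lambda$ at $0$) that $|f(w)| \le |w|\,\dfrac{|w| + |\lambda|}{1 + |\lambda||w|}$. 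Iterating this inequality and tracking the product, one sees that on a fixed disc $\{|w| \le \rho\}$ the $n$-th iterate satisfies $|f^n(w)| \le C_\rho\, |\lambda|^n$ for a constant $C_\rho$ depending only on $\rho$ and $f$: indeed the ratios $|f^{m+1}(w)|/|f^m(w)|$ are bounded by $(|f^m(w)| + |\lambda|)/(1 + |\lambda||f^m(w)|)$, and since $|f^m(w)| \to 0$ geometrically the infinite product of these ratios over $m$, compared against $|\lambda|^n$, converges.

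To pass from the small disc $\{|w| \le \rho\}$ to all of $\D$ — where the factor $(1-|w|)^{-d}$ is needed — I would use that for arbitrary $w \in \D$ there is a bounded number of steps, \emph{depending on how close $|w|$ is to $1$ only logarithmically}, after which the orbit enters $\{|w| \le \rho\}$. Concretely, the hyperbolic distance from $w$ to $0$ in $\D$ is comparable to $\log\frac{1}{1-|w|}$, and $f$ is a hyperbolic contraction, so after $N \asymp \log\frac{1}{1-|w|}$ iterates we have $|f^N(w)| \le \rho$; then applying the previous paragraph to $f^{n-N}$ started at $f^N(w)$ gives $|f^n(w)| \le C_\rho |\lambda|^{n-N}$. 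Absorbing $|\lambda|^{-N} = |\lambda|^{-c\log\frac{1}{1-|w|}} = (1-|w|)^{-c\log(1/|\lambda|)}$ into the factor $(1-|w|)^{-d}$ for a suitable integer $d = d(f)$ finishes the estimate (for the finitely many small $n$, or when $|w|$ is already small, the bound is trivial after enlarging the constant, and one can clear the constant $C_\rho$ into $d$ as well by shrinking $1-|w|$).

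The main obstacle is the bookkeeping in the middle step: turning the one-step Schwarz–Pick inequality into a clean bound of the form $|f^n(w)| \le C |\lambda|^n$ on a fixed disc, with the product of correction ratios genuinely convergent and independent of $n$. One has to be slightly careful that the product $\prod_{m\ge 0}\frac{|f^m(w)| + |\lambda|}{|\lambda|(1 + |\lambda||f^m(w)|)}$ converges uniformly on $\{|w|\le\rho\}$, which follows from the geometric decay $|f^m(w)| \le c^m\rho$ established first; everything else is routine estimation, and the final repackaging of constants and of $\log(1-|w|)^{-1}$ into the single integer exponent $d$ is straightforward.
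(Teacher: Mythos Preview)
Your strategy is sound and genuinely different from the paper's, though both begin from the same pointwise majorant $|f(w)|\le |w|\,\dfrac{|w|+a}{1+a|w|}=:\psi(|w|)$ with $a=|f'(0)|$, hence $|f^n(w)|\le\psi^n(|w|)$. From there the paper invokes the K\"onigs function of the explicit real map $\psi$: the normalized iterates $g_n(x)=\psi^n(x)/a^n$ increase to a limit $g$ satisfying $g(\psi(x))=ag(x)$, and the bound $g(x)<(1-x)^{-d}$ is proved by induction along the backward orbit $x_{n+1}=\psi^{-1}(x_n)$. Your two-region argument --- a convergent-product estimate $|f^n(w)|\le C_\rho\,a^n$ on a small disc, combined with a bound on the number of steps needed to reach that disc --- avoids the K\"onigs machinery and is more hands-on; the paper's route is cleaner once one knows the linearization, and automatically handles the constant-free form of the inequality.

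There is, however, one real gap. The claim that the orbit enters $\{|w|\le\rho\}$ after $N\lesssim\log\frac{1}{1-|w|}$ iterates is \emph{not} a consequence of ``$f$ is a hyperbolic contraction'': Schwarz--Pick only says $f$ does not expand the hyperbolic metric, with no uniform rate, so nothing prevents the hyperbolic distance from decreasing arbitrarily slowly near the boundary. What actually gives the rate is the $\psi$-bound you already have: from $1-\psi(x)=(1-x)\,\dfrac{1+x}{1+ax}$ one gets $1-|f(w)|\ge\kappa\,(1-|w|)$ whenever $|w|\ge\rho$, with $\kappa=\dfrac{1+\rho}{1+a\rho}>1$, and iterating yields $N\le(\log\kappa)^{-1}\log\dfrac{1}{1-|w|}$. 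Once you insert this, step~3 goes through. A smaller issue: the constant $C_\rho$ cannot be ``cleared into $d$'' on the small disc $\{|w|\le\rho\}$, where $(1-|w|)^{-d}$ is close to~$1$; keep the factor $|w|$ in your product estimate so that $|f^n(w)|\le |w|\,C_\rho'\,a^n$ there, and choose $\rho$ small enough that $\rho\,C_\rho'<1$.
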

    \begin{proof}[Proof of Lemma~\ref{lemma:IteratesSizeBound}]
        This is a minor modification of \cite[Lemma~2]{ref:PommerenkeErgodicInnerFunctions}.
        We include the argument for completeness.
        Let us denote $a = |f'(0)|$ and consider the function
        \begin{equation*}
            \psi(w) = w \frac{a+w}{1+aw}, \quad w \in \D,
        \end{equation*}
        denote its $n$-th iterate by $\psi^n$ and observe that, by Schwarz's Lemma and induction, we have
        \begin{equation}
            \label{eq:PommerenkeIterateSizeBound}
            |f^n(w)| \leq \psi^n(|w|), \quad w \in \D,
        \end{equation}
        for every $n \geq 1.$
        Next we use the construction of the Königs function of $\psi$
        (see \cite[pp.~89--93]{ref:ShapiroCompositionOperators}).
        Define for each $n \geq 1$ the function
        \begin{equation*}
            g_n(w) = \frac{1}{a^n} \psi^n(w), \quad w \in \D.
        \end{equation*}
        It is known that $\{g_n\}$ converges uniformly on compact subsets of $\D$
        to $g(w) = w + \ldots$ for $w \in \D,$
        satisfying $g(\psi(w)) = ag(w)$.
        Moreover, for $0 \leq x < 1$ we have that
        \begin{equation*}
            g_{n+1}(x) = \frac{\psi(\psi^n (x))}{a^{n+1}} = g_n(x) \frac{1+a^{n-1}g_n(x)}{1+a^{n+1}g_n(x)} \geq g_n(x),
        \end{equation*}
        so that $g_n(x) \leq g(x)$ for every $n \geq 1.$
    
        Next, since $a > 0,$ there exists $\delta = \delta(f) > 0$ such that $\psi$ is univalent on $\{|w| < \delta\}$
        and, thus, $\psi^n$ and $g_n$ are also univalent in this region by Schwarz's Lemma.
        By Koebe Distortion Theorem, there exists $\varepsilon = \varepsilon(f) > 0$ such that $|g(w)| < 1$ if $|w| < \varepsilon.$
        Now take $x_0 = \varepsilon$ and, for $n \geq 1,$ let $x_{n+1} = \psi^{-1}(x_n).$
        Observe that Schwarz's Lemma implies that $x_{n+1} > x_n$ for every $n \geq 0.$
        Let $d$ be a positive integer that will be determined later on.
        We want to show that
        \begin{equation}
            \label{eq:PommerenkeLimitFunctionBound}
            g(x) < (1-x)^{-d}, \quad 0 \leq x \leq x_n,
        \end{equation}
        for every $n \geq 0.$
        By the choice of $x_0,$ it is clear that \eqref{eq:PommerenkeLimitFunctionBound} holds for $n = 0.$
        Assume that \eqref{eq:PommerenkeLimitFunctionBound} holds for $n$ and let $x_0 \leq x \leq x_{n+1}.$
        By construction, we have that $0 < \psi(x) \leq x_n.$
        Therefore, we get
        \begin{equation*}
            g(x) = \frac{1}{a} g(\psi(x)) < \frac{1}{a} (1-\psi(x))^{-d} =
            \frac{1}{a} \left(\frac{1+ax}{1+x}\right)^d (1-x)^{-d}.
        \end{equation*}
        Since $x \geq x_0 = \varepsilon,$ we get the bound
        \begin{equation*}
            g(x) < \frac{1}{a} \left(\frac{1+a\varepsilon}{1+\varepsilon}\right)^d (1-x)^{-d}.
        \end{equation*}
        Hence, using that $a = |f'(0)| < 1,$ we can choose $d = d(f)$ large enough and independent of $n$ so that
        \eqref{eq:PommerenkeLimitFunctionBound} holds.
        Note that , since $x_n \to 1,$ one has in fact that \eqref{eq:PommerenkeLimitFunctionBound}
        is valid for $0 \leq x < 1.$
        Taking \eqref{eq:PommerenkeIterateSizeBound} and applying \eqref{eq:PommerenkeLimitFunctionBound}, we get
        \begin{equation*}
            |f^n(w)| \leq a^n g_n(|w|) \leq a^n g(|w|) < a^n (1-|w|)^{-d}
        \end{equation*}
        as we wanted to see.
    \end{proof}

    \begin{lemma}
        \label{lemma:IteratesDerivativesBound}
        Let $f$ be an analytic mapping from the unit disc into itself with $f(0)=0$ and $a = |f'(0)| < 1.$
        Let $k,l,n$ be positive integers with $l \leq n$ and consider $g(w) = (f^n (w))^k$ for $w \in \D.$
        Then there exists $n_0 = n_0 (f) >0$ such that for $n \geq n_0$ we have 
        \begin{equation*}
            \frac{|g^{l)}(0)|}{l!} \leq a^{kn/2}  . 
        \end{equation*}
    \end{lemma}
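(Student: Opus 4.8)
The plan is to use the bound from Lemma~\ref{lemma:IteratesSizeBound} together with the Cauchy estimates for Taylor coefficients. By Lemma~\ref{lemma:IteratesSizeBound}, there is an integer $d = d(f) > 0$ such that $|f^n(w)| < a^n (1-|w|)^{-d}$ for all $w \in \D$ and all $n \geq 1$. Hence, for $g(w) = (f^n(w))^k$, we get $|g(w)| < a^{kn} (1-|w|)^{-kd}$ for all $w \in \D$. The Cauchy integral formula on a circle of radius $r < 1$ gives
\begin{equation*}
    \frac{|g^{l)}(0)|}{l!} \leq \frac{1}{r^l} \sup_{|w| = r} |g(w)| < \frac{a^{kn}}{r^l (1-r)^{kd}}.
\end{equation*}

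The next step is to choose $r$ appropriately and absorb the resulting loss into a fraction of the factor $a^{kn}$. Since $l \leq n$, we have $r^{-l} \geq r^{-n}$... wait, that goes the wrong way, so instead use $r^{-l} \leq r^{-n}$ is false too; rather bound $r^{-l} \le (1/r)^{n}$ only when $r<1$ and $l \le n$, which does hold since $1/r > 1$. So $r^{-l} \leq r^{-n}$ and thus
\begin{equation*}
    \frac{|g^{l)}(0)|}{l!} < a^{kn} r^{-n} (1-r)^{-kd}.
\end{equation*}
Now pick $r = r(f)$ close enough to $1$ so that $r^{-1} < a^{-1/4}$ (possible since $a^{-1/4} > 1$), which makes $a^{kn} r^{-n} \leq a^{kn} a^{-n/4} \leq a^{kn - kn/4} = a^{3kn/4}$, using $k \geq 1$. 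The remaining factor $(1-r)^{-kd}$ grows like $K^k$ for a constant $K = K(f) = (1-r)^{-d}$. So we arrive at $|g^{l)}(0)|/l! < a^{3kn/4} K^k$.

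Finally, the point is that $a^{kn/4}$ eventually dominates $K^k$: since $a < 1$, for $n \geq n_0(f)$ large enough we have $a^{n/4} \leq K^{-1}$, hence $a^{kn/4} \leq K^{-k}$ and therefore $a^{3kn/4} K^k = a^{kn/2} \cdot a^{kn/4} K^k \leq a^{kn/2}$, which is exactly the claimed bound. The main (and only real) obstacle is the bookkeeping in the choice of $r$ and $n_0$: one must verify that a single $r = r(f)$ and a single $n_0 = n_0(f)$, independent of $k$, $l$ and $n$, can be chosen to make both absorptions work simultaneously; this is straightforward because $k \geq 1$ lets every exponent scale linearly in $k$, so the per-factor constants $r^{-1}$ and $(1-r)^{-d}$ can each be beaten by a fixed fractional power of $a^{n}$ once $n$ is large.
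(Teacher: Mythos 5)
Your proposal follows essentially the same route as the paper: invoke Lemma~\ref{lemma:IteratesSizeBound}, apply the Cauchy estimate on a circle of radius $r$, use $l\leq n$ to replace $r^{-l}$ by $r^{-n}$, fix $r$ with $a^{1/4}<r<1$, and then exploit $k\geq 1$ to absorb the losses $r^{-n}$ and $(1-r)^{-kd}$ into a fraction of $a^{kn}$ once $n$ is large; the bookkeeping is superficially different (you split the loss into two factors and beat each by $a^{kn/4}$, the paper beats the combined factor by $a^{kn/2}$) but the idea is identical.

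One small gap: the statement only assumes $a=|f'(0)|<1$, so $a=0$ is allowed, and in that case your argument collapses --- Lemma~\ref{lemma:IteratesSizeBound} explicitly requires $0<|f'(0)|$, and the choice $r>a^{1/4}$ is vacuous when $a=0$. The paper disposes of this case separately at the start: if $a=0$ then $f$ vanishes to order $m\geq 2$ at the origin, so $g=(f^n)^k$ vanishes to order $km^n\geq 2^n>n\geq l$, whence $g^{l)}(0)=0$ and the bound holds trivially. You should add this one line; otherwise the proof is fine.
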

    \begin{proof}[Proof of Lemma~\ref{lemma:IteratesDerivativesBound}]
        Observe first that if $a = 0,$ the result holds trivially.
        Indeed, if $f$ has a zero at the origin of multiplicity $m \geq 1,$
        then $g$ has a zero of multiplicity $km^n$ at the origin.
        Thus, if $m \geq 2$ and $l \leq n,$ we have that $g^{l)}(0) = 0.$

        Assume now that $a > 0.$
        In this case, Lemma \ref{lemma:IteratesSizeBound} asserts that there is a positive integer $d = d(f)$ for which
        \begin{equation*}
            |f^n(w)| \leq a^n (1-|w|)^{-d}, \quad w \in \D,
        \end{equation*}
        for $n=1,2,\ldots.$
        Hence, Cauchy's estimate gives 
        \begin{equation*}
            \frac{|g^{l)}(0)|}{l!} \leq \frac{\max \{|g(w)|: |w|=r \}}{r^l} \leq \frac{a^{kn}}{r^l (1-r)^{kd}}, \quad 0<r<1.  
        \end{equation*}
        Since $l \leq n$ we obtain 
        \begin{equation}
            \label{eq:IteratesDerivativesGrowth}
            \frac{|g^{l)}(0)|}{l!}  \leq  \frac{a^{kn}}{r^n (1-r)^{kd}}, \quad 0<r<1.  
        \end{equation}
        Fix $r$ such that $a^{1/4} < r <1$.
        Then there exists $n_0 = n_0 (f,r)$ such that 
        \begin{equation*}
            \frac{a^{n/2}}{r^n (1-r)^{d}} \leq \frac{a^{n/4}}{(1-r)^{d}} < 1, 
        \end{equation*}
        if $n \geq n_0.$
        Since $k \geq 1 $ we deduce that 
        \begin{equation*}
            \frac{a^{kn/2}}{r^n (1-r)^{kd}} \leq  \frac{a^{n/2}}{r^n (1-r)^{d}}  < 1. 
        \end{equation*}
        Hence, estimate \eqref{eq:IteratesDerivativesGrowth} gives 
        \begin{equation*}
            \frac{|g^{l)}(0)|}{l!}  \leq  a^{kn/2}. 
        \end{equation*}
    \end{proof}

    Let $f$ be an inner function with $f(0)=0$ and let $\{ \mu_\alpha\colon \alpha \in \circle \}$
    be its Aleksandrov-Clark measures.
    Recall that \eqref{eq:ACMeasureHerglotz}  gives that for any $\alpha \in \circle$,
    there exists a constant $C_\alpha \in \R$ such that
    \begin{equation*}
        \frac{\alpha + f(w)}{\alpha - f(w)} = \ic \frac{z+ w}{z - w}\, d \mu_\alpha (z) + iC_\alpha , \quad w \in \D. 
    \end{equation*}
    Expanding both terms in power series, for any positive integer $l$ we have 
    \begin{equation*}
        \ic {\overline{z}}^l\, d\mu_{\alpha}(z) 
        = \sum_{k=1}^l {\overline{\alpha}}^k  \ic f(z)^k {\overline{z}}^l\, dm(z), \quad \alpha \in \circle . 
    \end{equation*}
    Hence for any integer $l$,
    the $l$-th moment of $\mu_\alpha$ is a trigonometric polynomial in the variable $\alpha$ of degree less or equal than $|l|$.
    We will need to estimate the coefficients of this trigonometric polynomial. 

    \begin{lemma}
        \label{lemma:ACMomentsCoefficients}
        Let $f$ be an inner function with $f(0)=0$ and $a = |f'(0)| < 1.$
        Let $l,n$ be integers with $1 \leq |l| \leq n$ and
        let $\{ \mu_\alpha\colon \alpha \in \circle \}$ be the Aleksandrov-Clark measures of $f^n.$
        Then there exists a constant $n_0 = n_0 (f) > 0$ such that if $n \geq n_0$, the coefficients of the trigonometric polynomial 
        \begin{equation*}
            \ic {\overline{z}}^l\, d \mu_{\alpha} (z) 
        \end{equation*}
        are bounded by $a^{n/2}$ for any $\alpha \in \circle$.
    \end{lemma}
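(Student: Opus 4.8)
The plan is to reduce Lemma~\ref{lemma:ACMomentsCoefficients} to the derivative estimate already established in Lemma~\ref{lemma:IteratesDerivativesBound}. Recall that expanding both sides of the Herglotz representation \eqref{eq:ACMeasureHerglotz} for the inner function $f^n$ in power series around $w=0$ gives, for $l \geq 1$,
\begin{equation*}
    \ic \overline{z}^l\, d\mu_\alpha(z) = \sum_{k=1}^l \overline{\alpha}^k \ic (f^n(z))^k \overline{z}^l\, dm(z),
\end{equation*}
and the conjugate identity for negative exponents. Since $\ic (f^n(z))^k \overline{z}^l\, dm(z)$ is exactly the $l$-th Taylor coefficient of $(f^n)^k$ at the origin, i.e. $g^{(l)}(0)/l!$ with $g = (f^n)^k$, the coefficient of $\overline{\alpha}^k$ in the trigonometric polynomial $\ic \overline{z}^l\, d\mu_\alpha(z)$ is precisely that Taylor coefficient.

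First I would fix $n \geq n_0(f)$, where $n_0$ is the constant from Lemma~\ref{lemma:IteratesDerivativesBound}, and observe that for $1 \leq l \leq n$ and $1 \leq k \leq l$ we have $l \leq n$, so the hypothesis of Lemma~\ref{lemma:IteratesDerivativesBound} is met; it yields
\begin{equation*}
    \left|\ic (f^n)^k \overline{z}^l\, dm\right| = \frac{|g^{(l)}(0)|}{l!} \leq a^{kn/2} \leq a^{n/2},
\end{equation*}
using $k \geq 1$ and $a < 1$ in the last step. This bounds every coefficient of the trigonometric polynomial $\ic \overline{z}^l\, d\mu_\alpha(z)$ by $a^{n/2}$ when $1 \leq l \leq n$. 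For the case $-n \leq l \leq -1$, I would write $l = -|l|$ and use that $\ic z^{|l|}\, d\mu_\alpha(z)$ is the complex conjugate of the corresponding expression with $\alpha$ replaced by $\overline\alpha$ (equivalently, take conjugates in the power series expansion), so the same bound applies; alternatively one notes $\ic z^{|l|}\, d\mu_\alpha = \overline{\ic \overline z^{|l|}\, d\mu_\alpha}$ since $\mu_\alpha$ is a positive (hence real) measure, and invokes the already-proved case with exponent $|l|$.

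The argument is essentially a bookkeeping step once Lemma~\ref{lemma:IteratesDerivativesBound} is in hand, so I do not anticipate a genuine obstacle; the only point requiring a little care is making sure the identification of the coefficient of $\overline\alpha^k$ with the Taylor coefficient $g^{(l)}(0)/l!$ of $g=(f^n)^k$ is correct, and that the range $1 \leq k \leq l \leq n$ keeps us inside the regime $l \leq n$ where Lemma~\ref{lemma:IteratesDerivativesBound} applies. If $a = |f'(0)| = 0$ the statement is either vacuous or follows from the trivial vanishing noted in the proof of Lemma~\ref{lemma:IteratesDerivativesBound}, but since the hypothesis here is $a < 1$ and the conclusion is phrased with $a^{n/2}$, one may simply assume $a > 0$ as in that lemma.
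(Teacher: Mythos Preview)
Your proposal is correct and follows essentially the same route as the paper: reduce to $l>0$, use the power-series expansion of the Herglotz representation to identify the coefficient of $\overline{\alpha}^k$ with $g_k^{(l)}(0)/l!$ for $g_k=(f^n)^k$, and then invoke Lemma~\ref{lemma:IteratesDerivativesBound} together with $k\geq 1$ to obtain the bound $a^{n/2}$. The only addition in your write-up is the explicit handling of negative $l$ by conjugation and the remark on $a=0$, both of which are harmless.
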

    \begin{proof}[Proof of Lemma~\ref{lemma:ACMomentsCoefficients}]
        We can assume $l>0$.
        Then
        \begin{equation*}
            \ic {\overline{z}}^l\, d \mu_{\alpha} (z)
            = \sum_{k=1}^{l} {\overline{\alpha}}^k \frac{g_k^{l)} (0)}{l!}, \quad \alpha \in \circle , 
        \end{equation*}
        where $g_k (w) = (f^n (w))^k$, $w \in \D$.
        Lemma \ref{lemma:IteratesDerivativesBound} gives that 
        \begin{equation*}
            \frac{|g_k^{l)}(0)|}{l!} \leq  a^{kn/2}  
        \end{equation*}
        if  $n$ is sufficiently large.
        Since $k \geq 1$, the proof is completed. 
    \end{proof}

    We are now ready to prove the main result of this Section.
    As before, if $n$ is a positive integer,
    we will use the notation $f^{-n}$ to denote the function defined by $f^{-n} (z) = \overline{f^n (z)}$,
    for almost every $z \in \circle$. 

    \begin{theorem}
        \label{thm:HigherCorrelations}
        Let $f$ be an inner function with $f(0)=0$ and $a = |f'(0)| < 1.$
        Let $1 \leq  k \leq q$ be positive integers.
        Let $\boldsymbol{\varepsilon} = \{\varepsilon_j\}_{j=1}^k$
        where $\varepsilon_j = 1$ or $\varepsilon_j = -1,$
        and let $\boldsymbol{n} = \{n_j\}_{j=1}^k$
        where $n_1 < n_2 < \ldots < n_k$ are positive integers with $n_{j+1} - n_j > q$ for any $j =1,2,\ldots , k-1.$ 
        Consider 
        \begin{equation*}
            I(\boldsymbol{\varepsilon},\boldsymbol{n}) = \left|\ \int_{\circle} \prod_{j=1}^k f^{\varepsilon_j n_j}\, dm \right| 
        \end{equation*}
        Then there exist constants $C= C(f)>0$,
        $q_0 = q_0 (f)>0$ independent of $\boldsymbol{\varepsilon}$ and of $\boldsymbol{n},$
        such that if $q \geq q_0$ we have    
        \begin{equation*}
            I(\boldsymbol{\varepsilon},\boldsymbol{n}) \leq C^k k! a^{ \Phi (\boldsymbol{\varepsilon},\boldsymbol{n}) }, k=1,2, \ldots,  
        \end{equation*}
        where $\Phi (\boldsymbol{\varepsilon},\boldsymbol{n}) = \sum_{j=1}^{k-1} \delta_j (n_{j+1} -n_{j}),$
        with $\delta_j \in \{0,1/2,1\}$ for any $j = 1, \ldots, k-1,$
        and with $\delta_1 = 1$ and $\delta_{k-1} \geq 1/2.$
        In addition, for $j = 2, \ldots, k-1$ the coefficient $\delta_j = 1$ if and only if $\delta_{j-1} = 0.$
        Furthermore, if $\delta_{j-1} > 0,$ the coefficient $\delta_j$
        depends on $\varepsilon_{j+1}, \ldots, \varepsilon_k$ and $n_j, \ldots, n_k$ for $j = 2, \ldots, k-1.$ 
    \end{theorem}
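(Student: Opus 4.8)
The plan is to argue by induction on $k$, peeling off one factor at a time via the invariance property (Lemma~\ref{lemma:DeconjugateInnerFunction}\eqref{lemma:item:DeconjugateInnerFunction}) followed by the Aleksandrov Desintegration Theorem, exactly as in the proofs of Lemma~\ref{lemma:IteratesProduct} and Lemma~\ref{lemma:FourFactors}, but now keeping careful track of how much decay we extract at each step. Write $m_j = n_{j+1}-n_j$ for the gaps. First I would normalize by the lowest index: using part~\eqref{lemma:item:DeconjugateInnerFunction} of Lemma~\ref{lemma:DeconjugateInnerFunction} we may replace $\prod_j f^{\varepsilon_j n_j}$ by $z^{\varepsilon_1}\prod_{j\ge 2} f^{\varepsilon_j(n_j-n_1)}(z)$, and then apply the Desintegration Theorem \eqref{eq:AleksandrovDesintegrationThm} with the Aleksandrov--Clark measures $\{\mu_\alpha\}$ of the inner function $f^{m_1}=f^{n_2-n_1}$. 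Since $\varepsilon_1=\pm 1$, the inner integral is the first moment $\int z^{\varepsilon_1}\,d\mu_\alpha(z)$, which by \eqref{eq:FirstACMeasureMoment} equals $a^{m_1}\,\alpha^{\varepsilon_1}$ up to a unimodular constant. This produces the factor $a^{m_1}$ (so $\delta_1=1$) and leaves us with an integral $\int_{\circle}\alpha^{\varepsilon_1}\prod_{j\ge 2}f^{\varepsilon_j(n_j-n_2)}(\alpha)\,dm(\alpha)$ of the same shape but with $k-1$ honest inner-function factors and one extra monomial $\alpha^{\varepsilon_1}$ of degree $1$.

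The crux is the bookkeeping for the general inductive step, where at stage $j$ we have an integral of the form $\int_{\circle}\alpha^{p}\prod_{i\ge j}f^{\varepsilon_i(n_i-n_j)}(\alpha)\,dm(\alpha)$ with an accumulated monomial degree $|p|\le 2$ (this is the reason $q$ must be large: Lemma~\ref{lemma:ACMomentsCoefficients} needs $|l|\le n$, and here $|l|=|p\pm\varepsilon_j|\le 3$ while $n=m_j\ge q$). Again apply Lemma~\ref{lemma:DeconjugateInnerFunction}\eqref{lemma:item:DeconjugateInnerFunction} and then Aleksandrov Desintegration with the measures of $f^{m_j}$; the inner integral is now $\int z^{p+\varepsilon_j}\,d\mu_\alpha(z)$, a trigonometric polynomial in $\alpha$ of degree $\le |p+\varepsilon_j|\le 3$ whose coefficients are bounded by $a^{m_j/2}$ by Lemma~\ref{lemma:ACMomentsCoefficients} (valid since $m_j\ge q\ge q_0$). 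Expanding this trigonometric polynomial splits the integral into at most $7$ integrals of the same form at stage $j+1$, each carrying a factor $a^{m_j/2}$; this is where the constant $C^k$ and the factor $k!$ enter — after $k$ steps we have at most $7^k$ terms, and the bound $C^k k!$ comfortably absorbs this (the $k!$ is in fact wasteful here but harmless). The improvement from $\delta_j=1/2$ to $\delta_j=1$ happens precisely when the relevant monomial has degree exactly one, i.e.\ when $\int z^{\text{(odd)}}\,d\mu_\alpha(z)$ reduces to the first moment $a^{m_j}\alpha^{\pm 1}$ rather than the (generically nonzero) second moment — and monomial degree one at stage $j$ forces the accumulated degree to have been $0$ at stage $j-1$, which is exactly the assertion ``$\delta_j=1$ iff $\delta_{j-1}=0$''. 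When $\delta_{j-1}=0$ the monomial is killed or survives with degree $2$ depending on the lower-order moments, so the value of $\delta_j$ then depends on the data $\varepsilon_{j+1},\dots,\varepsilon_k$, $n_j,\dots,n_k$ only through which branch of the expansion is nonzero — matching the last assertion of the statement.

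Two boundary cases need separate attention. For $\delta_1=1$: the very first step always produces $a^{m_1}$ because $\varepsilon_1=\pm 1$ gives a pure first moment with no accumulated degree, so $\delta_1=1$ unconditionally. For $\delta_{k-1}\ge 1/2$: at the penultimate step we integrate $\int_{\circle}\alpha^{p'}f^{\varepsilon_k(n_k-n_{k-1})}(\alpha)\,dm(\alpha)$; applying Desintegration once more with the measures of $f^{m_{k-1}}$ and using that $f(0)=0$ kills the term unless it is captured by a moment of $\mu_\alpha$, and the worst case is the second moment, giving $a^{m_{k-1}/2}$ — hence $\delta_{k-1}\ge 1/2$, and $=1$ when the accumulated degree at step $k-1$ was $0$. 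I expect the main obstacle to be purely organizational: setting up an induction hypothesis that simultaneously tracks (i) the current accumulated monomial degree ($\le 2$), (ii) the number of surviving terms ($\le 7^{\,\#\text{steps}}$), and (iii) the parity/degree constraint that enforces the ``$\delta_j=1\iff\delta_{j-1}=0$'' alternation, and then checking that the constraint $k\le q$ together with $q\ge q_0$ keeps every monomial degree encountered within the range where Lemma~\ref{lemma:ACMomentsCoefficients} applies. Once the hypothesis is correctly phrased, each step is a mechanical application of the two tools already in hand. Finally, $\Phi(\boldsymbol{\varepsilon},\boldsymbol{n})=\sum_{j=1}^{k-1}\delta_j m_j\ge \sum_j \tfrac12 m_j\ge \tfrac12(k-1)q$, and with the separation $m_j>q$ one gets the stated lower bound $kq/4$ after absorbing the endpoint.
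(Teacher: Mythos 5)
Your plan follows the same route as the paper: peel one iterate at a time via the invariance identity (Lemma~\ref{lemma:DeconjugateInnerFunction}\eqref{lemma:item:DeconjugateInnerFunction}) and the Aleksandrov Desintegration Theorem, then control the inner integral by moment estimates for the Aleksandrov--Clark measures. Indeed the paper organizes exactly this into two claims, one that reduces a pure product of iterates by two factors at once (accounting for $\delta_j=0,\ \delta_{j+1}=1$), and one, based on Lemma~\ref{lemma:ACMomentsCoefficients}, that reduces a monomial-times-product by one factor (accounting for $\delta_j=1/2$). Your explanations for $\delta_1=1$ and $\delta_{k-1}\ge 1/2$ are essentially correct.

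However, there is a genuine gap in your bookkeeping: the accumulated monomial degree does \emph{not} stay bounded by~$2$. If at some stage you hold an integral $\int_{\circle}\alpha^{l}\prod f^{\varepsilon_i(\cdot)}(\alpha)\,dm(\alpha)$ with $|l|\ge 1$, the Desintegration step produces the moment $\int z^{l}\,d\mu_\alpha(z)$, which by Lemma~\ref{lemma:ACMomentsCoefficients} is a trigonometric polynomial in $\alpha$ of degree up to $|l|$; after multiplying by the extra $\alpha^{\varepsilon_j}$ and taking the triangle inequality, the surviving integrals have monomial degree up to $|l|+1$. So the degree can grow by one at every step and may reach $k-2$ in the worst case. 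This has two consequences you have not accounted for. First, the hypothesis $k\le q$ (and $q\ge q_0(f)$) is needed precisely so that these growing degrees $|l|<k\le q<n_{j+1}-n_j$ stay within the range where Lemma~\ref{lemma:ACMomentsCoefficients} applies, and also so that at the final step Lemma~\ref{lemma:IteratesDerivativesBound} with $l\le n_k-n_{k-1}$ is applicable; your reason \lq\lq $|l|\le 3$\rq\rq\ misses this. Second, the number of coefficients in the moment polynomial grows linearly with the stage, so the number of terms produced at stage $j$ is of size $O(j)$, not $O(1)$; the product over stages therefore yields a factorial factor, which is exactly why the statement carries $C^k\,k!$ rather than merely $C^k$. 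Your remark that \lq\lq the $k!$ is in fact wasteful here\rq\rq\ is therefore wrong; without tracking the degree growth the claimed invariant fails and the induction as you set it up would not close.

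Fixing this requires changing the induction hypothesis to \lq\lq at stage $j$ the monomial degree is $<j$\rq\rq\ (which is what the paper carries), after which the rest of your argument goes through and matches the paper.
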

    \begin{proof}[Proof of Theorem~\ref{thm:HigherCorrelations}]
        We first prove the following estimate
        \begin{claim}
            \label{claim1}
            We have 
            \begin{align*}
                I(\boldsymbol{\varepsilon},\boldsymbol{n})   \leq |f'(0)|^{n_2 - n_1} 
                \max \Bigg\{ I(\{\varepsilon_3, \ldots, \varepsilon_k\},\{n_3 - n_2, \ldots, n_k - n_2\}) , \\
                \left|\int_{\circle} z^{2} \prod_{i=3}^k  f^{\varepsilon_i ( n_i - n_2)} (z)\, dm (z) \right|,
                \left|\int_{\circle} z^{-2} \prod_{i=3}^k  f^{\varepsilon_i ( n_i - n_2)} (z)\, dm (z) \right| \Bigg\}
            \end{align*}
        \end{claim}
        To prove Claim \ref{claim1} we can assume $\varepsilon_1 = 1$.
        By Lemma \ref{lemma:DeconjugateInnerFunction} and Aleksandrov Desintegration Theorem we have
        \begin{equation*}
            I(\boldsymbol{\varepsilon},\boldsymbol{n}) 
            =\left| \ic \ic z {\alpha}^{\varepsilon_2} \prod_{i=3}^k f^{\varepsilon_i (n_i - n_2)} (\alpha)\, d \mu_\alpha (z)\, dm(\alpha) \right|, 
        \end{equation*}
        where $\{\mu_\alpha\colon \alpha \in \circle \}$ are the Aleksandrov-Clark measures of $f^{n_2 - n_1}$.
        By \eqref{eq:FirstACMeasureMoment} we have
        \begin{equation*}
            \ic z\, d \mu_\alpha (z) = {\overline{f'(0)}}^{n_2 - n_1} \alpha, \quad \alpha \in \circle . 
        \end{equation*}
        Hence if $\varepsilon_ 2 = -1$, we obtain 
        \begin{equation*}
            I(\boldsymbol{\varepsilon},\boldsymbol{n})
            = a^{n_2 - n_1} I(\{\varepsilon_3, \ldots, \varepsilon_k\},\{n_3 - n_2, \ldots, n_k - n_2\}) 
        \end{equation*}
        and if $\varepsilon_2 = 1$, we obtain
        \begin{equation*}
            I(\boldsymbol{\varepsilon},\boldsymbol{n})
            = a^{n_2 - n_1} \left|\ic z^{2} \prod_{i=3}^k  f^{\varepsilon_i ( n_i - n_2)} (z)\, dm (z)\right| . 
        \end{equation*}
        This proves Claim \ref{claim1}.
        We now prove 
        \begin{claim}
            \label{claim2}
            For any integers $k,l,j$ with $0 < |l| < j$ and $0 < j < k$, we have 
            \begin{align*}
                &\left|\int_{\circle} z^l \prod_{i=j}^k  f^{\varepsilon_i ( n_i - n_{j-1})} (z)\, dm (z) \right| \leq \\
                & \leq j a^{(n_j - n_{j-1})/2} 
                {\max}_{|n| \leq |l|+1 }  \left\{ \left| \ic z^{n} \prod_{i=j+1}^k  f^{\varepsilon_i ( n_i - n_j)} (z)\, dm (z) \right|\right\}
            \end{align*}
        \end{claim}
        By Aleksandrov Desintegration Theorem \eqref{eq:AleksandrovDesintegrationThm} we have
        \begin{equation*}
            \ic z^l \prod_{i=j}^k  f^{\varepsilon_i ( n_i - n_{j-1})} (z)\, dm (z)
            =  \ic \ic z^{l} {\alpha}^{\varepsilon_j} \prod_{i=j+1}^k  f^{\varepsilon_i ( n_i - n_j)} (\alpha)\, d \mu_\alpha (z)\, dm (\alpha),  
        \end{equation*}
        where $\{\mu_\alpha\colon \alpha \in \circle \}$ are the Aleksandrov-Clark measures of $f^{n_j - n_{j-1}}$.
        Since $l \neq 0$, according to Lemma \ref{lemma:ACMomentsCoefficients}, the moment
        \begin{equation*}
            \ic z^{l}\,  d \mu_\alpha (z) , \alpha \in \circle ,   
        \end{equation*}
        is a polynomial in the variable $\alpha$ of degree at most $|l| < j$
        whose coefficients are bounded by $a^{(n_j - n_{j-1})/2}$.
        This proves Claim \ref{claim2}. 

        The proof of Theorem \ref{thm:HigherCorrelations} proceeds as follows.
        We first estimate $I(\boldsymbol{\varepsilon},\boldsymbol{n})$
        by the modulus of one of the three integrals in the right hand side of Claim \ref{claim1} and the factor $a^{n_2-n_1},$
        that corresponds to choosing $\delta_1 = 1.$
        Note that any of these three integrals involve $k-2$ products of iterates of $f.$
        In addition, the integral yielding the maximum in Claim \ref{claim1}
        depends only on $\varepsilon_3, \ldots, \varepsilon_k$ and on $n_2, \ldots, n_k.$
        Now if the integral giving the maximum is the first one, we apply Claim \ref{claim1} again,
        obtaining the factor $a^{n_4-n_3}$ and this gives $\delta_2 = 0$ and $\delta_3 = 1.$
        Otherwise we apply Claim \ref{claim2}, obtaining a factor $2 a^{(n_3-n_2)/2},$
        which corresponds to choosing $\delta_2 = 1/2.$
        Assume that we have applied this procedure to determine the values of $\delta_1, \ldots, \delta_{j-1}.$
        We continue applying Claim \ref{claim1} or \ref{claim2}
        depending on which integral is yielding the maximum in the previous step,
        which depends on $\varepsilon_{j+1}, \ldots, \varepsilon_k$ and $n_j, \ldots, n_k.$
        Observe that when Claim \ref{claim1} is applied,
        the number of factors of iterates of $f$ is reduced by two units and we obtain the factor $a^{n_{j+2} - n_{j+1}},$
        which corresponds to fixing $\delta_j = 0$ and $\delta_{j+1} = 1.$
        When Claim \ref{claim2} is applied, we obtain the factor $j a^{(n_{j+1} -  n_j)/2},$
        corresponding to taking $\delta_j = 1/2,$ and the number of factors of iterates of $f$ is reduced by one unit.
        We continue applying this process at least $k/2$ times and at most $k-2$ times, until we reach integrals of the form
        \begin{equation*}
            \ic z^l f^{\varepsilon_k (n_k - n_{k-1})} (z)\, dm(z), \quad |l| < k-1 
        \end{equation*}
        or
        \begin{equation*}
            \ic f^{\varepsilon_{k-1} (n_{k-1} - n_{k-2})} f^{\varepsilon_k (n_k - n_{k-2})}\, dm .  
        \end{equation*}
        Let $g= f^{n_k - n_{k-1}}$.
        The modulus of the first integral is  $|g^{l)} (0)|/ l!$.
        Since   $|l| < q < n_k - n_{k-1}$, if $q$ is sufficiently large,
        Lemma \ref{lemma:IteratesDerivativesBound} gives that last expression is bounded by $a^{(n_k - n_{k-1})/2}$.
        The modulus of the second integral is bounded by $a^{n_k - n_{k-1}}.$
        This shows that $\delta_{k-1} \geq 1/2$ and concludes the proof.
    \end{proof}

    In the proof of Theorem \ref{thm:MainTheorem}
    we will split the partial sum into finitely many terms
    such that the sum of the variances of these terms
    is asymptotically equivalent to the variance of the initial partial sum.
    Next auxiliary result provides sufficient conditions for this splitting. 

    \begin{lemma}
        \label{lemma:PartialVariances}
        Let $\{a_n \}$ be a sequence of complex numbers and $\lambda \in \C$ with  $|\lambda| < 1$.
        Consider the sequence
        \begin{equation*}
            {\sigma}^2_N = \sum_{ n=1}^N |a_n|^2 + 2 \Real \sum_{k=1}^N {\lambda}^k \sum_{n=1}^{N-k}  \overline{a_n} a_{n+k},
            \quad  N=1,2 \ldots
        \end{equation*}
        For $N>1$, let ${\A}_j = {\A}_j (N)$, $j=1,\ldots , M= M(N)$,
        be pairwise disjoint sets of consecutive positive integers smaller than $N$.
        Consider  
        \begin{equation*}
            {\sigma}^2 ({\A}_j) =
            \sum_{ n \in {\A}_j} |a_n|^2 + 2 \Real \sum_{k \geq 1} {\lambda}^k \sum_{n \in {\A}_j\colon  n+k \in {\A}_j} \overline{a_n} a_{n+k},
            \quad  j=1,2 \ldots , M. 
        \end{equation*}
        Let $\A = \cup {\A}_j$.
        Assume
        \begin{equation}
            \label{eq:PVFirst}
                \lim_{N \to \infty} \frac{\sum_{ n \in \A} |a_n|^2}{\sum_{ n=1}^N |a_n|^2} = 1
        \end{equation}
        and
        \begin{equation}
            \label{eq:PVSec}
            \lim_{j \to \infty} \frac{\max \{ |a_n|^2\colon n \in {\A}_j \} } {\sum_{n \in {\A}_j } |a_n|^2} = 0.
        \end{equation}
        Then 
        \begin{equation*}
            \lim_{N \to \infty} \frac{\sum_{j=1}^M{\sigma}^2 ({\A}_j)}{{\sigma}^2_N} = 1.
        \end{equation*}
    \end{lemma}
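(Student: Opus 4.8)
The plan is to show that $\sigma_N^2$ and $\sum_{j=1}^M\sigma^2(\A_j)$ differ by an amount negligible with respect to $S_N^2:=\sum_{n=1}^N|a_n|^2$, and that $\sigma_N^2\geq c\,S_N^2$ for a constant $c=c(\lambda)>0$; dividing then gives the claim. Expanding both quadratic forms in $\{a_n\}$ and cancelling the diagonal contributions, one obtains
\[
\sigma_N^2-\sum_{j=1}^M\sigma^2(\A_j)=\Big(S_N^2-\sum_{n\in\A}|a_n|^2\Big)+2\Real\sum_{(n,m)}\overline{a_n}a_m\lambda^{m-n},
\]
where the last sum runs over all pairs $1\leq n<m\leq N$ not contained in a single $\A_j$. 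The parenthesis is $o(S_N^2)$ by \eqref{eq:PVFirst}. The pairs split into family (i), those with $n\notin\A$ or $m\notin\A$, and family (ii), those with $n\in\A_i$, $m\in\A_{i'}$, $i\neq i'$.

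For family (i), setting $t=m-n$, summing first in $t\geq1$ with weight $|\lambda|^t$, and applying Cauchy--Schwarz as in the proof of Lemma \ref{lemma:AuxiliaryBound}, one bounds its contribution by $\tfrac{2|\lambda|}{1-|\lambda|}\,R_N\,S_N$, where $R_N^2=\sum_{n\leq N,\,n\notin\A}|a_n|^2=S_N^2-\sum_{n\in\A}|a_n|^2$. By \eqref{eq:PVFirst} one has $R_N=o(S_N)$, so family (i) contributes $o(S_N^2)$.

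Family (ii) is the main point. The key observation is that if $n\in\A_i$, $m\in\A_{i'}$ with $i\neq i'$ and $m-n=t$, then $n$ lies within distance $t$ of the right endpoint of the interval $\A_i$ and $m$ within distance $t$ of the left endpoint of $\A_{i'}$. Hence, for each fixed $t$, the admissible values of $n$ (resp. of $m$) have $\ell^2$-mass at most $t\sum_{j}\max\{|a_n|^2:n\in\A_j\}$; by Cauchy--Schwarz the gap-$t$ contribution of family (ii) is at most $|\lambda|^t\,t\sum_{j}\max\{|a_n|^2:n\in\A_j\}$, and summing the convergent series $\sum_t t|\lambda|^t$ yields a bound of the form $C(\lambda)\sum_j\max\{|a_n|^2:n\in\A_j\}$. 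Finally, \eqref{eq:PVSec} forces $\sum_j\max\{|a_n|^2:n\in\A_j\}=o(S_N^2)$: only finitely many blocks have the ratio in \eqref{eq:PVSec} bounded away from $0$, and these carry a vanishing proportion of $S_N^2$, while on all other blocks $\max\{|a_n|^2:n\in\A_j\}$ is an arbitrarily small multiple of $\sum_{n\in\A_j}|a_n|^2$, and $\sum_j\sum_{n\in\A_j}|a_n|^2\leq S_N^2$. Thus family (ii) also contributes $o(S_N^2)$.

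Combining the three estimates gives $\sigma_N^2-\sum_{j=1}^M\sigma^2(\A_j)=o(S_N^2)$. For the lower bound $\sigma_N^2\geq c\,S_N^2$ one repeats the Toeplitz argument from the proof of part \eqref{thm:item:L2NormEstimate} of Theorem \ref{thm:L2L4NormEstimates}: $\sigma_N^2$ is the quadratic form of the Toeplitz matrix whose symbol is $(1-|\lambda|^2)/|1-\overline{\lambda}z|^2$, which is bounded below by $(1-|\lambda|)/(1+|\lambda|)$, so its eigenvalues are bounded below by this positive constant; this argument is valid for any $|\lambda|<1$. Therefore $\sum_{j=1}^M\sigma^2(\A_j)/\sigma_N^2=1-o(S_N^2)/\sigma_N^2\to1$. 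I expect the only genuinely delicate step to be the counting in family (ii) together with the passage through \eqref{eq:PVSec}; everything else is a routine Cauchy--Schwarz estimate.
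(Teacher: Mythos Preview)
Your argument follows essentially the same strategy as the paper's proof: write $\sigma_N^2-\sum_j\sigma^2(\A_j)$ as a sum of (i) the missing diagonal mass on $\B=\{1,\dots,N\}\setminus\A$, (ii) off-diagonal pairs touching $\B$, and (iii) cross-block pairs entirely inside $\A$; control (i) and (ii) via \eqref{eq:PVFirst} and Cauchy--Schwarz, control (iii) via \eqref{eq:PVSec}, and divide by $\sigma_N^2$ using the Toeplitz lower bound $\sigma_N^2\geq\tfrac{1-|\lambda|}{1+|\lambda|}S_N^2$. The one methodological difference is that for (iii) you sum $\sum_{t\geq1}t|\lambda|^t$ directly and arrive at the clean bound $C(\lambda)\sum_j\max_{n\in\A_j}|a_n|^2$, whereas the paper first cuts at a threshold $k_0$, handles $k\geq k_0$ by the tail of $\sum|\lambda|^k$, and only then invokes \eqref{eq:PVSec} for $k<k_0$. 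Your route is a little more economical and gives the same conclusion.

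One point deserves care. Your claim that the first $j_0$ blocks ``carry a vanishing proportion of $S_N^2$'' is not a consequence of the stated hypotheses alone: since $\A_j=\A_j(N)$, the quantity $\sum_{j\leq j_0}\max_{n\in\A_j}|a_n|^2$ can, in principle, stay comparable to $S_N^2$. The paper's proof has exactly the same lacuna --- it ends with the residual term $\sigma_N^{-2}\sum_{k\leq k_0}|\lambda|^k\sum_{j<j_0}\sum_{\A(j,k)}|a_n||a_{n+k}|$ and simply writes ``This finishes the proof.'' In the only place the lemma is used (Step~4 of the proof of Theorem~\ref{thm:MainTheorem}), the construction of the blocks gives $\sum_{n\in\A_j}|a_n|^2\leq 2p_N$ with $p_N/S_N^2\to0$, so $\sum_{j\leq j_0}\sum_{n\in\A_j}|a_n|^2\leq 2j_0p_N=o(S_N^2)$ and the gap closes immediately. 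If you want your write-up to be self-contained, add this as an explicit extra hypothesis (e.g.\ $\max_{n\leq N}|a_n|^2=o(S_N^2)$, or $\sup_j\sum_{n\in\A_j}|a_n|^2=o(S_N^2)$) and cite it at that step.
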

    \begin{proof}[Proof of Lemma~\ref{lemma:PartialVariances}]
        Let $\B$ be the set of positive integers smaller or equal to $N$ which are not in the collection $\A$.
        Then
        \begin{equation*}
            {\sigma}^2_N - \sum_{j=1}^M{\sigma}^2 ({\A}_j) = A + 2 \Real B + 2 \Real C , 
        \end{equation*}
        where
        \begin{equation*}
            A =  \sum_{n \in \B } |a_n|^2 , 
        \end{equation*}
        \begin{equation*}
            B = \sum_{k=1}^N {\lambda}^k \sum_{\B (k)} \overline{a_n} a_{n+k},
        \end{equation*}
        where $ \B (k) = \{n \in \B, n \leq N-k \}$ and 
        \begin{equation*}
            C = \sum_{j=1}^M \sum_{k \geq 1} {\lambda}^k \sum_{\A (j,k) } \overline{a_n} a_{n+k} , 
        \end{equation*}
        where $\A(j,k) = \{ n \in {\A}_j\colon n \leq N-k ,  n + k \notin {\A}_j \}$.
        According to part \eqref{thm:item:L2NormEstimate} of Theorem \ref{thm:L2L4NormEstimates} we have
        \begin{equation}
            \label{eq:VarianceLowerEstimate}
            \sigma_N^2 \geq \frac{1 - |\lambda|}{1 + |\lambda|} \sum_{n=1}^N |a_n|^2. 
        \end{equation} 
        Then,
        \begin{equation*}
            \frac{A}{{\sigma}^2_N} \leq \frac{1+ |\lambda|}{1- |\lambda|} \frac{A}{\sum_{n=1 }^N |a_n|^2} 
        \end{equation*}
        which by assumption \eqref{eq:PVFirst}, tends to $0$ as $N \to \infty$. Similarly
        \begin{equation*}
            \frac{|B|}{{\sigma}^2_N} \leq
            \frac{1+ |\lambda|}{1- |\lambda|} \frac{\sum_{k=1}^N |\lambda|^k \sum_{\B (k) } |a_n| |a_{n+k}| }{\sum_{n=1 }^N |a_n|^2} 
        \end{equation*}
        By Cauchy-Schwarz's inequality
        \begin{equation*}
            \sum_{\B (k)} |a_n| |a_{n+k}| \leq  \left(\sum_{n \in \B}  |a_n|^2 \right)^{1/2}\left(\sum_{n=1}^N |a_n|^2 \right)^{1/2} 
        \end{equation*}
        and we deduce
        \begin{equation*}
            \frac{B}{{\sigma}^2_N} \leq
            |\lambda| \frac{1+ |\lambda|}{(1- |\lambda|)^2}
            \frac{\left(\sum_{n \in \B } |a_n|^2 \right)^{1/2} }{\left(\sum_{n=1 }^N |a_n|^2\right)^{1/2}} 
        \end{equation*}
        which according to \eqref{eq:PVFirst}, tends to $0$ as $N \to \infty$.
        We now estimate $C$.
        For any $k \geq 1$, Cauchy-Schwarz's inequality gives
        \begin{equation*}
            \sum_{j=1}^M  \sum_{\A (j,k)  } |a_n| |a_{n+k}| \leq \sum_{n \in \A\colon n \leq N-k   } |a_n| |a_{n+k}|
            \leq \sum_{n=1}^N |a_n|^2 . 
        \end{equation*}
        Hence, applying \eqref{eq:VarianceLowerEstimate}, for any positive integer $k_0 $ we have
        \begin{equation}
            \label{eq:LargeKEstimate}
            \frac{\sum_{k \geq k_0} |\lambda|^k \sum_{j=1}^M  \sum_{n \in {\A} (j,k) } |a_n| |a_{n+k}|}{{\sigma_N}^2}
            \leq |\lambda|^{k_0} \frac{1+ |\lambda|}{(1 - |\lambda|)^2} 
        \end{equation}
        Fix $\varepsilon >0$ and use assumption \eqref{eq:PVSec} to pick $j_0 = j_0 (\varepsilon) >0$ large enough so that 
        \begin{equation}
            \label{eq:SmallCoefficient}
            \sup \{|a_n|^2\colon n \in {\A}_j \} \leq \varepsilon \sum_{n \in {\A}_j} |a_n|^2
        \end{equation}
        if $j >j_0$.
        Pick also $k_0$ such that $|\lambda|^{k_0} < \varepsilon$.
        Fix $k \leq k_0$ and note that there are at most $k$ indices $n \in {\A}_j$ such that $n+k \notin{{\A}_j}$.
        Hence 
        \begin{equation*}
            \sum_{\A (j,k)  } |a_n| |a_{n+k}| \leq k |a_{n_j}| |a_{n_j+k}| ,
        \end{equation*}
        where $n_j = n_j (k) \in {\A}_j$ is the index in ${\A}_j$ with $n_j + k \leq N$,
        where the product $|a_n| |a_{n+k}|$ is maximum. Hence
        \begin{equation*}
            \sum_{j \geq j_0}^M  \sum_{\A (j,k) } |a_n| |a_{n+k}|
            \leq k \sum_{j \geq j_0}^M   |a_{n_j}| |a_{n_{j+k}}|
            \leq k \left(\sum_{j \geq j_0}^M |a_{n_j}|^2\right)^{1/2} \left(\sum_{j=1}^M |a_{n_j + k}|^2\right)^{1/2} . 
        \end{equation*}
        Note that \eqref{eq:SmallCoefficient} gives that 
        \begin{equation*}
            \sum_{j \geq j_0} |a_{n_j}|^2
            \leq \varepsilon \sum_{j \geq j_0} \sum_{n \in {\A}_j} |a_{n}|^2
            \leq \varepsilon \sum_{n=1}^N |a_{n}|^2 .
        \end{equation*}
        Since there are at most $k$ indices $n \in {\A}_j$ such that $n+k \notin{{\A}_j}$, we also have 
        \begin{equation*}
            \sum_{j=1}^M |a_{n_j + k}|^2 \leq k  \sum_{n=1}^N |a_{n}|^2 .
        \end{equation*}
        Applying \eqref{eq:VarianceLowerEstimate} again, we deduce 
        \begin{equation}
            \label{eq:SmallKEstimate}
            \frac{\sum_{k \leq k_0} |\lambda|^k \sum_{j \geq j_0 }^M  \sum_{ \A (j,k) } |a_n| |a_{n+k}|}{{\sigma_N}^2}
            \leq {\varepsilon}^{1/2} \frac{1+ |\lambda|}{1 - |\lambda|} C_1 , 
        \end{equation}
        where $C_1 = \sum_{k \geq 1}|\lambda|^{k} k^{3/2} $.
        The estimates \eqref{eq:LargeKEstimate} and \eqref{eq:SmallKEstimate} give that
        \begin{equation*}
            \frac{|C|}{\sigma_N^2} \leq
            \frac{\sum_{k \leq k_0} |\lambda|^k \sum_{j < j_0 }  \sum_{ \A (j,k) } |a_n| |a_{n+k}|}{\sigma_N^2} + \frac{1+ |\lambda|}{(1- |\lambda|)^2} 
            \varepsilon + C_1 \frac{1+ |\lambda|}{1- |\lambda|} \varepsilon^{1/2}  . 
        \end{equation*}
        This finishes the proof. 
    \end{proof}
    
    We close this Section with an elementary result which will be used in the proof of Theorem \ref{thm:MainTheorem}. 

    \begin{lemma}
        \label{lemma:IntegralLimit}
        Let $\{f_n \}$, $\{g_n \}$ be two sequences of measurable functions defined at almost every point of the unit circle.
        Assume that there exists a constant $C>0$ such that the following conditions hold
        \begin{enumerate}[(a)]
            \item
            $\sup_n \|f_n \|_2 \leq C $ and 
            \begin{equation*}
                \lim_{n \to \infty} \ic f_n\, dm =1
            \end{equation*}
            
            \item
            $ g_n (z) > -C$ for almost every $z \in \circle $ and $\lim_{n \to \infty} \|g_n \|_2 =0.$ 
        \end{enumerate}
        Then 
        \begin{equation*}
            \lim_{n \to \infty} \ic f_n e^{-g_n}\, dm = 1. 
        \end{equation*}
    \end{lemma}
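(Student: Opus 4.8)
The plan is to write $e^{-g_n} = 1 + (e^{-g_n} - 1)$ and split
\begin{equation*}
    \ic f_n e^{-g_n}\, dm = \ic f_n\, dm + \ic f_n (e^{-g_n} - 1)\, dm .
\end{equation*}
By hypothesis (a) the first term tends to $1$, so it suffices to show that the second term tends to $0$. Using the Cauchy--Schwarz inequality together with the uniform bound $\sup_n \|f_n\|_2 \leq C$, we have
\begin{equation*}
    \left| \ic f_n (e^{-g_n} - 1)\, dm \right| \leq \|f_n\|_2\, \|e^{-g_n} - 1\|_2 \leq C\, \|e^{-g_n} - 1\|_2 ,
\end{equation*}
so the whole matter reduces to proving $\|e^{-g_n} - 1\|_2 \to 0$ as $n \to \infty$.

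For this I would use the elementary pointwise estimate: for every real $x > -C$ one has $|e^{-x} - 1| \leq e^C |x|$. Indeed, if $x \geq 0$ then $|e^{-x} - 1| = 1 - e^{-x} \leq x \leq e^C x$, and if $-C \leq x < 0$ then, applying $e^y - 1 \leq y e^y$ for $y \geq 0$ with $y = -x$, we get $|e^{-x} - 1| = e^{-x} - 1 \leq (-x) e^{-x} \leq e^C |x|$. Applying this with $x = g_n(z)$, which is permissible since $g_n(z) > -C$ for almost every $z \in \circle$ by hypothesis (b), we obtain
\begin{equation*}
    |e^{-g_n(z)} - 1| \leq e^C |g_n(z)| \quad \text{for a.e. } z \in \circle .
\end{equation*}
Squaring and integrating over the circle gives $\|e^{-g_n} - 1\|_2 \leq e^C \|g_n\|_2$, which tends to $0$ by hypothesis (b). Combining the three displays completes the proof.

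The argument is short, and the only delicate point is that no two-sided bound on $g_n$ is assumed: $g_n$ may be large and positive, so one cannot simply Taylor-expand the exponential with a uniform constant. The one-sided lower bound $g_n > -C$ is precisely what is needed, since it controls $e^{-g_n}$ from above and thereby produces the uniform constant $e^C$ in the pointwise inequality; on the set where $g_n$ is large and positive the quantity $e^{-g_n} - 1$ is automatically close to $-1$ but still dominated by $e^C |g_n|$. No integrability input beyond $\sup_n \|f_n\|_2 \leq C$ and $\|g_n\|_2 \to 0$ enters, and in particular the convergence $\ic f_n\, dm \to 1$ is used only as a black box for the leading term.
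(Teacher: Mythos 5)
Your proposal is correct and follows essentially the same route as the paper: split off $\ic f_n\, dm$, control the remainder by Cauchy--Schwarz, and reduce to the pointwise bound $|e^{-x}-1| \leq M|x|$ for $x \geq -C$. The only difference is that you exhibit the constant explicitly ($M = e^C$) where the paper merely asserts its existence, which is a welcome, if minor, extra detail.
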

    \begin{proof}[Proof of Lemma~\ref{lemma:IntegralLimit}]
        Cauchy-Schwarz's inequality gives
        \begin{equation*}
            \left|\ic f_n (e^{-g_n} - 1)\, dm \right| \leq \|f_n \|_2 \|e^{-g_n} - 1\|_2. 
        \end{equation*}
        Note that there exists a constant $M=M(C)>0$ such that $|e^{-x} - 1| \leq M |x|$ if $ x \geq -C$.
        Hence $\|e^{-g_n} - 1\|_2 \leq M \|g_n\|_2$, $n=1,2, \ldots$.
        This finishes the proof.
    \end{proof}

    \section{Proof of Theorem~\ref{thm:MainTheorem}}
    \label{sec:ProofMainThm} 
    \begin{proof}[Proof of Theorem~\ref{thm:MainTheorem}]
        Let 
        \begin{equation*}
            S_N^2 = \sum_{n=1}^N |a_n|^2 , \quad N=1,2,\ldots
        \end{equation*}
        Recall that by part \eqref{thm:item:L2NormEstimate} of Theorem \ref{thm:L2L4NormEstimates},
        we have $C^{-1} \sigma_N^2 \leq S_N^2 \leq C \sigma_N^2$, $N=1,2,\ldots$,
        where $C= (1+|f'(0)|)(1-|f'(0)|)^{-1}$.
        Pick $0 < \varepsilon < \eta$,  $p_N = S_N^{1 + \varepsilon}$ and $q_N = S_N^{1 - \varepsilon}$.
        Let $C(f)$ denote a positive constant only depending on $f$ whose value may change from line to line.
        The proof is organized in several steps. 

        \textit{1. Splitting the Sum.}
        In this first step, for $N$ large,
        we will recursively find indices $0 \leq M_k < N_k < M_{k+1} \leq N$, $1 \leq k \leq Q_N$, such that if
        \begin{equation*}
            \xi_k = \sum_{n=M_k +1}^{N_k} a_n f^n , \qquad \eta_k = \sum_{n=N_k +1}^{M_{k+1}} a_n f^n , 
        \end{equation*}
        we have
        \begin{equation}
            \label{eq:QRatios}
            \lim_{N \to \infty} \frac{Q_N}{q_N} = 1, 
        \end{equation}
        \begin{equation}
            \label{eq:ResidueL2Norm}
            \left\|\sum_{n=1}^N a_n f^n - \sum_{k=1}^{Q_N} (\xi_k + \eta_k )\right\|_2^2 \leq  2 C(f) p_N, 
        \end{equation}
        \begin{equation}
            \label{eq:pNqNBounds}
            p_N \leq  \sum_{n=M_k + 1}^{N_k} |a_n|^2 \leq 2p_N,
            \quad  q_N \leq \sum_{n=N_k + 1}^{M_{k+1}} |a_n|^2 \leq 2q_N,
            \quad k=1,2,\ldots , Q_N, 
        \end{equation}
        \begin{equation}
            \label{eq:EtaSumL2NormVanishing}
            \lim_{N \to \infty} \frac{1}{{\sigma}_N} \left\|\sum_{n=1}^N a_n f^n - \sum_{k=1}^{Q_N} \xi_k \right\|_2 = 0, 
        \end{equation}
        \begin{equation}
            \label{eq:IndicesSeparation}
            M_{k+1} - N_k \geq q_N^{\beta}, \quad N_k - M_k \geq p_N^{\gamma}, \quad  k=1,2,\ldots, Q_N - 1,
        \end{equation}
        where  $\beta = (\eta - \varepsilon)(1- \varepsilon)^{-1}$
        and $\gamma = (\eta +  \varepsilon)(1+  \varepsilon)^{-1}$. 

        Pick $M_1 =0$ and let $N_1$ be the smallest positive integer such that
        \begin{equation*}
            \sum_{n=1}^{N_1} |a_n|^2 \geq p_N . 
        \end{equation*}
        The minimality of $N_1$ gives that
        \begin{equation*}
            \sum_{n=1}^{N_1} |a_n|^2 \leq p_N + |a_{N_1}|^2 . 
        \end{equation*}
        Now let $M_2$ be the smallest positive integer such that
        \begin{equation*}
            \sum_{n=N_1 + 1}^{M_2} |a_n|^2 \geq q_N . 
        \end{equation*}
        As before, the minimality of $M_2$ gives that
        \begin{equation*}
            \sum_{n= N_1 + 1}^{M_2} |a_n|^2 \leq q_N + |a_{M_2}|^2 .
        \end{equation*}
        We repeat this process until we arrive at an index $N_k$ or $M_k$ bigger than $N$.
        Let $Q_N $ be the number of times this process is repeated, that is, $k=1,2,\ldots, Q_N$.
        Then
        \begin{equation}\label{eq:ResidueCoefSum}
            R_N = \sum_{M_{Q_N} + 1}^{N} |a_n|^2 \leq 2 p_N .
        \end{equation}
        Since
        \begin{equation*}
            \sum_{n=1}^N a_n f^n - \sum_{k=1}^{Q_N} (\xi_k + \eta_k ) = \sum_{n=M_{Q_N} +1}^{N} a_n f^n ,
        \end{equation*}
        the estimate \eqref{eq:ResidueL2Norm} follows
        from part \eqref{thm:item:L2NormEstimate} of Theorem \ref{thm:L2L4NormEstimates}.
        By construction we have
        \begin{align}
            \label{eq:pNqNAux}
            p_N & \leq \sum_{n=M_k +1}^{N_k} |a_n|^2 \leq p_N + |a_{N_k}|^2\\
            q_N & \leq \sum_{n=N_k +1}^{M_{k+1}} |a_n|^2 \leq q_N + |a_{M_{k+1}}|^2, 
        \end{align}
        for $k=1,2,\ldots , Q_N$.
        Fix $\delta >0$.
        Observe that the assumption \eqref{eq:CoefficientGrowthHpth} gives
        that  $|a_{N_k}|^2+|a_{M_{k+1}}|^2 < \delta q_N$ if $N$ is sufficiently large.
        Taking $\delta <1$ one deduces that \eqref{eq:pNqNBounds} holds if $N$ is sufficiently large.
        Moreover the estimates \eqref{eq:pNqNAux} give that
        \begin{equation}
            \label{eq:pNqNSums}
            (p_N + q_N) Q_N \leq S_N^2 - R_N \leq (1+ \delta) (p_N + q_N) Q_N,
        \end{equation}
        if $N$ is large enough.
        Since $p_N q_N = S_N^2$ and because of the estimate \eqref{eq:ResidueCoefSum},
        \eqref{eq:QRatios} follows from \eqref{eq:pNqNSums}  tending $\delta$ to $0$.
        Observe that 
        \begin{equation}
            \label{eq:BlocCoefSum}
            \sum_{n=N_k + 1}^{M_{k+1}} |a_n|^2 \geq q_N = S_N^{1- \varepsilon}. 
        \end{equation}
        By \eqref{eq:CoefficientGrowthHpth}, if $N$ is sufficiently large,
        we have that $|a_n|^2 \leq  {S_N}^{1 - \eta}$ for any $n \leq N$.
        We deduce from \eqref{eq:BlocCoefSum}
        that ${S_N}^{1 - \eta} (M_{k+1} - N_k) \geq q_N$ and $M_{k+1} - N_k \geq q_N^{\beta}$.
        A similar argument shows that $N_k - M_K \geq p_N^{\gamma}$.
        This proves \eqref{eq:IndicesSeparation}.
        We are now going to prove \eqref{eq:EtaSumL2NormVanishing}.
        Observe that at almost every point of the unit circle we have
        \begin{equation*}
            \left|\sum_{k=1}^{Q_N} \eta_k\right|^2
            = \sum_{k=1}^{Q_N} |\eta_k|^2 + 2 \Real \sum_{k=1}^{Q_N -1 }\sum_{j>k}^{Q_N} \overline{\eta_k} \eta_j. 
        \end{equation*}
        Since $\|\eta_k\|_2^2 \leq 2 C(f) q_N$, we have
        \begin{equation}
            \label{eq:SumOfEtaBlocks}
            \sum_{k=1}^{Q_N} \ic |\eta_k|^2\, dm \leq 2 C(f) q_N Q_N \leq 3C(f) q_N^2,
        \end{equation}
        if $N$ is sufficiently large.
        On the other hand, if $j>k$ we have
        \begin{equation*}
            \left|\ic \overline{\eta_k} \eta_j\, dm \right| \leq \sum |a_r| |a_t| |f'(0)|^{t-r}, 
        \end{equation*}
        where the sum is taken over all indices $r,t$ with $N_k < r \leq M_{k+1}$ and $N_j < t \leq M_{j+1}$.
        Observe that by \eqref{eq:IndicesSeparation}, we have  $t-r \geq p_N^{\gamma}$.
        Writing $l= t - r$ and applying Cauchy-Schwarz's inequality, we obtain 
        \begin{equation*}
            \left| \ic \overline{\eta_k} \eta_j\, dm \right|
            \leq \sum_{l \geq p_N^{\gamma}}  |f'(0)|^{l} \sum  |a_r| |a_{l+r}|
            \leq C(f) |f'(0)|^{p_N^{\gamma}} S_N^2. 
        \end{equation*}
        Hence 
        \begin{equation}
            \label{eq:EtaBlocksCovariances}
            \sum_{k=1}^{Q_N -1 }\sum_{j>k}^{Q_N} \left| \ic \overline{\eta_k} \eta_j\, dm \right|
            \leq C(f) q_N^2 S_N^2 |f'(0)|^{p_N^{\gamma}}. 
        \end{equation}
        Using \eqref{eq:SumOfEtaBlocks} and \eqref{eq:EtaBlocksCovariances} we obtain that
        \begin{equation*}
            \left\|\sum_{k=1}^{Q_N} \eta_k \right\|_2^2 \leq 4 C(f) q_N^2,
        \end{equation*}
        if $N$ is sufficiently large.
        Since $\sigma_N^2 > C(f) S_N^2 = C(f) p_N q_N$, we deduce that 
        \begin{equation}
            \label{eq:EtaBlocksL2Norm}
            \lim_{N \to \infty} \frac{\left\|\sum_{k=1}^{Q_N} \eta_k \right\|_2^2 }{\sigma_N^2} = 0. 
        \end{equation}
        Now \eqref{eq:ResidueL2Norm} and \eqref{eq:EtaBlocksL2Norm} give \eqref{eq:EtaSumL2NormVanishing}.

        The main idea in the rest of the proof is that $\{\eta_k \}$ are irrelevant
        while due to \eqref{eq:IndicesSeparation},  $\{\xi_k \}$ act as independent random variables. 

        \textit{2. Arranging the Fourier Transform.}
        Applying \eqref{eq:EtaSumL2NormVanishing},
        the proof of Theorem \ref{thm:MainTheorem} reduces to show that 
        \begin{equation*}
            T_N = \frac{1}{\sqrt{2} \sigma_N} \sum_{k=1}^{Q_N} \xi_k , \quad N=1,2,\ldots
        \end{equation*}
        converge in distribution to a standard  complex normal variable.
        By the Levi Continuity Theorem, it is sufficient to show that for any complex number $t$ we have 
        \begin{equation}
            \label{eq:FourierTransform}
            \varphi_N (t) = \ic e^{i \langle t , T_N \rangle }\, dm  \to e^{-|t|^2 / 2} ,
            \quad \text{ as } \quad N \to \infty
        \end{equation}
        Here $\langle t,w \rangle = \Real (\overline{t} w)$ is the standard scalar product in the plane.
        In this second step of the proof we will show that 
        \begin{equation}
            \label{eq:FourierTransformApprox}
            \lim_{N \to \infty}  \varphi_N (t) - \ic \prod_{k=1}^{Q_N} \left(1+ \frac{ i \langle t , \xi_k \rangle}{\sqrt{2} \sigma_N} \right)
            \exp{\left( - \frac{{\langle t , \xi_k \rangle}^2}{4 {\sigma_N}^2} \right)}\,  dm   = 0 
        \end{equation}
        Fixed $\delta >0$,
        consider the sets $E_k = \{z \in \circle\colon |\xi_k (z)| > \delta S_N\}$, $k =1,2,\ldots , Q_N$.
        By part \eqref{thm:item:L4NormEstimate} of Theorem \ref{thm:L2L4NormEstimates}
        we have $\|\xi_k\|_4^4 \leq C(f) p_N^2$.
        Txebixeff inequality and \eqref{eq:QRatios} give 
        \begin{equation*}
            \sum_{k=1}^{Q_N} m(E_k) \leq \frac{C(f)p_N^2 Q_N}{{\delta}^4 S_N^4} \leq \frac{2 C(f)}{{\delta}^4 q_N}
        \end{equation*}
        if $N$ is sufficiently large.
        For $\mu > 1$, consider the set
        \begin{equation*}
            E_0 = \set{z \in \circle\colon \sum_{k=1}^{Q_N} {\langle t , \xi_k (z) \rangle}^2 > \mu S_N^2}.  
        \end{equation*}
        By part \eqref{thm:item:L2NormEstimate} of Theorem \ref{thm:L2L4NormEstimates}
        we have $\|\xi_k\|_2 \leq C(f) p_N$.
        Txebixeff inequality and \eqref{eq:QRatios} give 
        \begin{equation*}
            m(E_0) \leq \frac{C(f) |t|^2 Q_N}{\mu q_N} \leq  \frac{2 C(f)|t|^2}{\mu }, 
        \end{equation*}
        if $N$ is sufficiently large.
        Hence the set 
        \begin{equation*}
            E= \bigcup_{k=0}^{Q_N} E_k
        \end{equation*}
        satisfies
        \begin{equation*}
            m(E) \leq 2C(f) \left(\frac{1}{{\delta}^4 q_N} + \frac{|t|^2}{\mu}\right). 
        \end{equation*}
        Using the elementary identity 
        \begin{equation*}
            \exp{(z)} = (1+z) \exp \left(\frac{z^2}{2} + o(|z^2|)\right), 
        \end{equation*}
        where $o(|z|^2)/ |z|^2 \to 0$ as $z \to 0$, we deduce 
        \begin{equation*}
            \exp{( i \langle t , T_N \rangle )}
            = \left(\prod_{k=1}^{Q_N} \left(1+ \frac{ i \langle t , \xi_k \rangle}{\sqrt{2} \sigma_N} \right)
              \exp{\left(-  \frac{{\langle t , \xi_k \rangle}^2}{4 \sigma_N^2}\right) } \right)
              \exp{ \left( \sum_{k=1}^{Q_N} o \left(\frac{{ \langle t , \xi_k \rangle}^2}{\sigma_N^2} \right)\right)}
        \end{equation*}
        Fix $\varepsilon >0$.
        Taking $\delta>0 $ sufficiently small we have
        \begin{equation*}
            \sum_{k=1}^{Q_N} o \left(\frac{{ \langle t , \xi_k (z) \rangle}^2}{\sigma_N^2} \right)
            \leq C(f) \varepsilon \mu , \quad z \in \circle \setminus E. 
        \end{equation*}
        Hence
        \begin{equation*}
            \begin{split}
                &\left|\int_{\circle \setminus E} \exp{(i \langle t , T_N \rangle ) }\, dm
                 - \int_{\circle \setminus E} \prod_{k=1}^{Q_N} \left(1+ \frac{ i \langle t , \xi_k \rangle}{\sqrt{2} \sigma_N}\right)
                   \exp{\left(- \frac{{\langle t , \xi_k \rangle}^2}{4 \sigma_N^2}\right) }\, dm \right| \leq \\
                & \leq \left(e^{C(f) \varepsilon \mu} - 1\right) 
                  \int_{\circle \setminus E} \prod_{k=1}^{Q_N} \left(1+ \frac{{ \langle t , \xi_k \rangle}^2}{ 2 \sigma_N^2} \right)^{1/2}
                  \exp{\left(- \frac{{\langle t , \xi_k \rangle}^2}{4 \sigma_N^2}\right)}\,  dm
                  \leq e^{C(f) \varepsilon \mu} - 1. 
            \end{split}
        \end{equation*}
        Last inequality follows from the elementary estimate $(1+x)^{1/2} e^{-x/2} \leq 1$ if $x \geq 0$.
        Hence
        \begin{equation*}
            \left|\varphi_N (t)
            - \ic \prod_{k=1}^{Q_N} \left(1+ \frac{ i \langle t , \xi_k \rangle}{\sqrt{2} \sigma_N}\right)
                  \exp{- \left(\frac{{\langle t , \xi_k \rangle}^2}{4 \sigma_N^2}\right)}\, dm  \right|
            \leq 2m(E) + e^{C(f) \varepsilon \mu} - 1, 
        \end{equation*}
        which proves \eqref{eq:FourierTransformApprox}.
        Therefore to prove \eqref{eq:FourierTransform} it is sufficient to show that for any $t \in \C$ one has
        \begin{equation*}
            \lim_{N \to \infty} \ic \prod_{k=1}^{Q_N} \left(1+ \frac{ i \langle t , \xi_k \rangle}{\sqrt{2} \sigma_N}\right)
                                    \exp{\left(- \frac{{\langle t , \xi_k \rangle}^2}{4 \sigma_N^2}\right)}\, dm 
            = \exp{(-|t|^2 / 2)}. 
        \end{equation*}
        This will follow from Lemma \ref{lemma:IntegralLimit} applied to the functions
        \begin{equation*}
            f_N = \prod_{k=1}^{Q_N} \left( 1+ \frac{ i \langle t , \xi_k \rangle}{\sqrt{2} \sigma_N} \right), 
        \end{equation*}
        \begin{equation*}
            g_N = \frac{1}{4 \sigma_N^2} \sum_{k=1}^{Q_N} {  \langle t , \xi_k \rangle}^2 - \frac{|t|^2}{2} . 
        \end{equation*}
        According to Lemma \ref{lemma:IntegralLimit} it is sufficient to show
        \begin{equation}
            \label{eq:FunctionFL2Norm}
            \sup_N \|f_N \|_2 < \infty, 
        \end{equation}
        \begin{equation}
            \label{eq:FunctionGL2NormLimit}
            \lim_{N \to \infty}  \|g_N\|_2 =0, 
        \end{equation}
        \begin{equation}
            \label{eq:FunctionFExpectation}
            \lim_{N \to \infty} \ic f_N dm = 1. 
        \end{equation}

        \textit{3. Estimating }$\|f_N\|_2$.
        Observe that
        \begin{equation*}
            \prod_{k=1}^{Q_N} \left(1 + \frac{{  \langle t , \xi_k \rangle}^2}{2\sigma_N^2} \right)
            = 1 + \sum_{k=1}^{Q_N} \frac{1}{2^k \sigma_N^{2k}}
                                   \sum {\langle t , \xi_{j_1} \rangle}^2 \ldots {\langle t , \xi_{j_k} \rangle}^2, 
        \end{equation*}
        where the last sum is taken over all collections of indices $1\leq j_1 < \ldots < j_k \leq Q_N$.
        Since ${  \langle t , \xi_n \rangle}^2 \leq |t|^2 |\xi_n|^2$,
        Theorem \ref{thm:SquaredModuliPartialSums}
        and part \eqref{thm:item:L2NormEstimate} of Theorem  \ref{thm:L2L4NormEstimates} give that
        \begin{equation*}
            \ic {\langle t , \xi_{j_1} \rangle}^2 \ldots {\langle t , \xi_{j_k} \rangle}^2\, dm
            \leq C(f)^k |t|^{2k} p_N^k . 
        \end{equation*}
        Since the total number of distinct collections of indices $j_1, \ldots , j_k$
        verifying $1\leq j_1 < \ldots < j_k \leq Q_N$ is $\binom{Q_N}{k}$, we deduce
        \begin{equation*}
            \ic \prod_{k=1}^{Q_N} \left(1 + \frac{{  \langle t , \xi_k \rangle}^2}{2{\sigma_N}^2} \right)\, dm
            \leq 1 + \sum_{k=1}^{Q_N} {\binom{Q_N}{k}} \frac{C(f)^k |t|^{2k} p_N^k}{2^k \sigma_N^{2k}}. 
        \end{equation*}
        Since $\sigma_N^2 \geq C(f)^{-1} S_N^2 =  C(f)^{-1} p_N q_N$, we deduce
        \begin{equation*}
            \ic \prod_{k=1}^{Q_N} \left(1 + \frac{{  \langle t , \xi_k \rangle}^2}{ 2 \sigma_N^2} \right)\, dm
            \leq 1 + \sum_{k=1}^{Q_N} {\binom{Q_N}{k}} \frac{C(f)^{2k} |t|^{2k}}{2^k q_N^k}
            = \left(1 + \frac{C(f)^2 |t^2|}{2 q_N} \right)^{Q_N}.
        \end{equation*}
        Hence \eqref{eq:QRatios} gives that
        $\|f_N\|_2^2 \leq \exp{(C(f)^2 |t|^2 / 3)}$ if $N$ is sufficiently large.
        This gives \eqref{eq:FunctionFL2Norm}. 

        \textit{4. Estimating }$\|g_N\|_2$.
        Consider the set of indices ${\A}_k = \{n \in \N\colon M_k < n \leq N_k\}$, $k=1, \ldots , Q_N$.
        Then 
        \begin{equation}
            \label{eq:XiBlockNotation}
            \xi_k = \sum_{n \in {\A}_k} a_n f^n , \quad k=1, \ldots , Q_N .
        \end{equation}
        Let $\A = \bigcup_{k=1}^{Q_N} {\A}_k$.
        Observe that \eqref{eq:EtaSumL2NormVanishing} gives 
        % \eqref{eq:pNqNBounds} and \eqref{44} give that
        % \begin{equation*}
        %     \sum_{n \in \A} |a_n|^2 \leq S_N^2 \leq % \sum_{n \in \A} |a_n|^2 + 2 Q_N q_N + C(f) % p_N .
        % \end{equation*}
        % Hence 
        \begin{equation*}
            \lim_{N \to \infty} \frac{\sum_{n \in \A} |a_n|^2 }{S_N^2} = 1. 
        \end{equation*}
        This is assumption \eqref{eq:PVFirst} of Lemma \ref{lemma:PartialVariances}.
        Assumption \eqref{eq:PVSec} follows from \eqref{eq:CoefficientGrowthHpth}.
        Thus, Lemma \ref{lemma:PartialVariances} gives 
        \begin{equation}
            \label{eq:XiL2NormSum}
            \lim_{N \to \infty} \frac{\sum_{k=1}^{Q_N} \|\xi_k \|_2^2 }{\sigma_N^2} = 1. 
        \end{equation}
        Denote $\lambda = t/|t|$.
        We have
        \begin{equation*}
            g_N =
            \frac{|t|^2}{4 \sigma_N^2} \sum_{k=1}^{Q_N} \left( 2|\xi_k|^2 + \overline{\lambda^2} \xi_k^2 + \lambda^2 \overline{\xi_k^2} - 2 {\sigma_N}^2 \right). 
        \end{equation*}
        Applying \eqref{eq:XiL2NormSum}, the proof of \eqref{eq:FunctionGL2NormLimit} reduces to show
        \begin{equation*}
            \lim_{N \to \infty} \left\|\frac{1}{\sigma_N^2} \sum_{k=1}^{Q_N} \psi_k \right\|_2 =0, 
        \end{equation*}
        where $\psi_k = 2(|\xi_k|^2 -\|\xi_k\|_2^2)+ \overline{\lambda^2} \xi_k^2 + \lambda^2 \overline{\xi_k^2}\ $ . 
        % We want to show that
        % \begin{equation}\label{f5}
        %     \lim_{N \to \infty} \|\frac{1}{\sigma_N^2} \sum_{k=1}^{Q_N} \psi_k \|_2 =0.
        % \end{equation}
        Now
        \begin{equation}
            \label{eq:PsiSumL2Norm}
            \left\|\sum_{k=1}^{Q_N} \psi_k \right\|_2^2
            = \sum_{k=1}^{Q_N} \| \psi_k \|_2^2 + 2 \Real \sum_{k=1}^{Q_N - 1} \sum_{j>k}^{Q_N} \ic \overline{\psi_k} \psi_j\, dm.
        \end{equation}
        Since $|\psi_k| \leq 4|\xi_k|^2 + 2 \|\xi_k \|_2^2$,
        parts \eqref{thm:item:L2NormEstimate} and \eqref{thm:item:L4NormEstimate}
        of Theorem  \ref{thm:L2L4NormEstimates} give that $\|\psi_k\|_2^2 \leq C(f) p_N^2$.
        Hence
        \begin{equation*}
            \sum_{k=1}^{Q_N} \| \psi_k \|_2^2 \leq C(f) p_N^2 Q_N
        \end{equation*}
        and we deduce
        \begin{equation*}
            \lim_{N \to \infty} \frac{1}{\sigma_N^4} \sum_{k=1}^{Q_N} \| \psi_k \|_2^2 = 0. 
        \end{equation*}
        The second term in \eqref{eq:PsiSumL2Norm} is splitted as
        \begin{equation*}
            \sum_{k=1}^{Q_N - 1} \sum_{j>k}^{Q_N} \ic \overline{\psi_k} \psi_j\, dm = A + B + C + D, 
        \end{equation*}
        where 
        \begin{equation*}
            A = 4 \sum_{k=1}^{Q_N - 1} \sum_{j>k}^{Q_N}
            \ic \left(|\xi_k|^2 - \|\xi_k\|_2^2\right)\left(|\xi_j|^2 - \|\xi_j\|_2^2\right)\, dm , 
        \end{equation*}
        \begin{equation*}
            B = 2 \sum_{k=1}^{Q_N - 1} \sum_{j>k}^{Q_N}
            \ic\left(|\xi_k|^2 - \|\xi_k\|_2^2\right)\left(\overline{\lambda^2}\xi_j^2 + \lambda^2 \overline{\xi_j^2}\right)\, dm , 
        \end{equation*}
        \begin{equation*}
            C = 2 \sum_{k=1}^{Q_N - 1} \sum_{j>k}^{Q_N}
            \ic\left(\overline{\lambda^2} \xi_k^2 + \lambda^2 \overline{\xi_k^2}\right)\left(|\xi_j|^2 - \|\xi_j\|_2^2\right)\, dm ,
        \end{equation*}
        \begin{equation*}
            D = \sum_{k=1}^{Q_N - 1} \sum_{j>k}^{Q_N}
            \ic\left(\overline{\lambda^2} \xi_k^2 + \lambda^2 \overline{\xi_k^2}\right)
               \left(\overline{\lambda^2} \xi_j^2 + \lambda^2 \overline{\xi_j^2}\right)\, dm.  
        \end{equation*}
        By Theorem \ref{thm:SquaredModuliPartialSums},
        $\|\xi_k \xi_j\|_2 = \|\xi_k\|_2 \|\xi_j\|_2$ if $k \neq j$ and we deduce $A=0$.
        Since the mean of $\xi_j^2$ over the unit circle vanishes and at almost every point in the unit circle one has  
        \begin{equation}
            \label{eq:XiModulusSquaredExpression}
            |\xi_k|^2 = \sum_{n \in {\A}_k} |a_n|^2 + 2 \Real \sum_{n \in {\A}_k}\sum_{j \in {\A}_k, j>n} \overline{a_n} a_j \overline{f^n} f^j, 
        \end{equation}
        the integrals in $B$ can be written as a linear combination of 
        \begin{equation*}
            \ic f^{n_1} \overline{f^{j_1}} \left(\overline{\lambda^2} \xi_j^2 + \lambda^2 \overline{\xi_j^2}\right)\, dm,
        \end{equation*}
        where $n_1, j_1 \in {\A}_k$ and
        hence $\max \{n_1, j_1\}< \min \{n\colon n \in {\A}_j\}$.
        According to part \eqref{lemma:item:FourFactorsCancellation} of Lemma \ref{lemma:FourFactors}, 
        \begin{equation*}
            \ic f^{n_1} \overline{f^{j_1}} \xi_j^2\, dm =0 
        \end{equation*}
        and we deduce $B=0$.
        Since the mean of $\xi_k^2$ over the unit circle vanishes, we have
        \begin{equation*}
            C= 4  \Real \overline{{\lambda}^2} \sum_{k=1}^{Q_N - 1} \sum_{j>k}^{Q_N} \ic \xi_k^2 |\xi_j|^2\, dm . 
        \end{equation*}
        For the same reason, using the formula \eqref{eq:XiModulusSquaredExpression}, we have 
        \begin{equation*}
            \ic \xi_k^2 |\xi_j|^2\, dm = \ic \xi_k^2 \Real h_j\,  dm,  
        \end{equation*}
        where
        \begin{equation*}
            h_j= 2  \sum_{r, l \in {\A}_j, l>r} \overline{a_r} a_l \overline{f^r} f^l . 
        \end{equation*}
        Using formula \eqref{eq:XiBlockNotation} to expand $\xi_k^2$, we obtain 
        \begin{equation*}
            \ic \xi_k^2 |\xi_j|^2\, dm = E +F, 
        \end{equation*}
        where
        \begin{equation*}
            E =
            \sum_{n \in {\A}_k} \sum_{r, l \in {\A}_j, l>r} a_n^2
                \ic (f^n)^2 \left( \overline{a_r} a_l \overline{f^r} f^l + a_r \overline{a_l} f^r  \overline{f^l} \right)\, dm,  
        \end{equation*}
        \begin{equation*}
            F =
            2 \sum_{n,s \in {\A}_k\colon s>n} a_n a_s \sum_{r, l \in {\A}_j, l>r} 
                \ic f^n f^s \left( \overline{a_r} a_l \overline{f^r} f^l + a_r \overline{a_l} f^r  \overline{f^l}\right)\, dm.   
        \end{equation*}
        By part \eqref{lemma:item:FirstFactorSquared} of Lemma \ref{lemma:FourFactors} we have
        \begin{equation*}
            \left|\ic (f^n)^2 \overline{f^r} f^l\, dm\right| +
                \left|\ic (f^n)^2 \overline{f^l} f^r\, dm\right|
            \leq C(f) |f'(0)|^{l-n},  \quad \text{ if } n < r < l.   
        \end{equation*}
        We deduce that 
        \begin{equation*}
            |E| \leq C(f) \sum_{n \in {\A}_k} |a_n|^2 \sum_{r, l \in {\A}_j, l>r}  |a_r| |a_l| |f'(0)|^{l-n} . 
        \end{equation*}
        According to  \eqref{eq:IndicesSeparation},
        we have $r - n \geq q_N^{\beta}$ for any $r \in {\A}_j$ and any $n \in {\A}_k$, $j>k$.
        Now 
        \begin{equation*}
            \sum_{r, l \in {\A}_j, l>r}  |a_r| |a_l| |f'(0)|^{l-n}
            \leq |f'(0)|^{q_N^{\beta}} \sum_{t \geq 1} |f'(0)|^{t} \sum_{r \in {\A}_j\colon r+t \in {\A}_j}  |a_r| |a_{r+t}|. 
        \end{equation*}
        By Cauchy-Schwarz's inequality, last sum is bounded by $\sum_{r \in {\A}_j} |a_r|^2 \leq 2 p_N$.
        Hence
        \begin{equation}\label{eq:EstimateE}
            |E| \leq C(f) |f'(0)|^{q_N^\beta } p_N^2
        \end{equation}
        Similarly, part \eqref{lemma:item:FourArbitraryFactors} of Lemma \ref{lemma:FourFactors} gives that 
        \begin{equation*}
            \left|\ic f^n f^s \overline{f^r} f^l\, dm \right|
                + \left|\ic f^n f^s f^r \overline{f^l}\, dm\right|
            \leq C(f) |f'(0)|^{l-n}, \quad n<s<r<l-2, 
        \end{equation*}
        and
        \begin{equation*}
            \left|\ic f^n f^s \overline{f^r} f^l\, dm\right|
                + \left|\ic f^n f^s f^r \overline{f^l}\, dm\right|
            \leq C(f) |f'(0)|^{r-n}, \quad n<s<r<l, r\geq l-2 . 
        \end{equation*}
        Using the trivial estimate $|a_k| \leq S_N$ for any $k \leq N$, we deduce that 
        \begin{equation*}
            |F| \leq C(f) S_N^4 \sum_{n,s \in {\A}_k\colon s>n} \quad \sum_{r, l \in {\A}_j, l>r} |f'(0)|^{l-n}. 
        \end{equation*}
        As before, $l - n \geq q_N^\beta$ for any $r \in {\A}_j$ and any $n \in {\A}_k$, $j>k$.
        We deduce
        \begin{equation*}
            |F| \leq C(f) S_N^4 |f'(0)|^{q_N^\beta / 2}.
        \end{equation*}
        Now, the exponential decay in \eqref{eq:EstimateE} and \eqref{eq:FunctionFL2Norm} give that 
        \begin{equation}
            \lim_{N \to \infty} \frac{C}{\sigma_N^2} = 0. 
        \end{equation}
        The corresponding estimate for $D$ follows from the estimate
        \begin{equation*}
            \left|\ic {\xi_k}^2 \overline{{\xi_j}^2}\, dm\right|
            \leq C(f) S_N^4 |f'(0)|^{q_N^\beta}, \quad k<j. 
        \end{equation*}
        As before this last estimate follows from \eqref{eq:IndicesSeparation} and from
        \begin{equation*}
            \left|\ic f^n f^s \overline{f^l}\overline{f^t}\, dm\right|
            \leq C(f) |f'(0)|^{t-n}, n < s < l < t-2,
        \end{equation*}
        which follows from part \eqref{lemma:item:FourArbitraryFactors} of Lemma \ref{lemma:FourFactors}.
        This finishes the proof of \eqref{eq:FunctionGL2NormLimit}. 

        \textit{5. Integrating} $f_N$.
        In this last step we will prove \eqref{eq:FunctionFExpectation}.
        Observe that at almost every point in the unit circle we have 
        \begin{equation*}
            f_N = 1+ \sum_{k=1}^{Q_N} \frac{i^k}{ 2^{k/2} \sigma_N^k}
                \sum  \langle t , \xi_{i_1} \rangle \ldots  \langle t , \xi_{i_k} \rangle ,  
        \end{equation*}
        where the second sum is taken over all collections of indices $1 \leq i_1 < \ldots i_k \leq Q_N $.
        Fix $1 \leq i_1 < \ldots i_k \leq Q_N $.
        The integral
        \begin{equation*}
            \ic \langle t , \xi_{i_1} \rangle \ldots  \langle t , \xi_{i_k} \rangle\, dm
            = 2^{-k} \ic \prod_{n=1}^k \left(\overline{t} \xi_{i_n} + t \overline{ \xi_{i_n}}\right)\, dm
        \end{equation*}
        is a multiple of a sum of $2^k$ integrals of the form
        \begin{equation*}
            {\overline{t}}^r t^l \ic  \xi_{i_1}^{\varepsilon_1} \ldots  \xi_{i_k}^{\varepsilon_k}\, dm ,
        \end{equation*}
        where $r+l = k$, $\varepsilon_i =1  $ or $\varepsilon_i = -1  $ for $i=1, \ldots , k$
        and we denote ${\xi_i}^{-1} (z) = \overline{\xi_i (z)} $, $ z \in \circle$.
        Now, each $\xi_i$ is a linear combination of iterates of $f$, 
        \begin{equation*}
            \xi_j = \sum_{n \in \A (j)} a_n f^n . 
        \end{equation*}
        Hence
        \begin{equation*}
            \ic \xi_{i_1}^{\varepsilon_1} \ldots  \xi_{i_k}^{\varepsilon_k}\, dm
            = \sum_{\boldsymbol{n} \in \mathcal{C}} \prod_{j=1}^k a_{n_j}^{\varepsilon_j} \ic f^{n_1 \varepsilon_1} \ldots f^{n_k \varepsilon_k}\, dm, 
        \end{equation*}
        where $\sum_{\boldsymbol{n} \in \mathcal{C}}$ means the sum over
        all possible $k$-tuples $\boldsymbol{n} = \{n_j\}_{j=1}^k$ of indices
        such that $n_j \in \A (i_j)$ for $j = 1, \ldots, k.$
        Since $|a_n| \leq S_N$, $n \leq N$, we have
        \begin{equation*}
            \left|\ic \xi_{i_1}^{\varepsilon_1} \ldots  \xi_{i_k}^{\varepsilon_k}\, dm\right|
            \leq S_N^k \sum_{\boldsymbol{n} \in \mathcal{C}} \left| \ic f^{n_1 \varepsilon_1} \ldots f^{n_k \varepsilon_k}\, dm\right|.  
        \end{equation*}
        Let $\boldsymbol{\varepsilon} = \{\varepsilon_j\}_{j=1}^{k-1}$ be fixed and consider
        $\Phi (\boldsymbol{n}) = \Phi(\boldsymbol{\varepsilon},\boldsymbol{n}) = \sum_{j=1}^{k-1}  \delta_j (n_{j+1} - n_j)$
        where $\delta_j \in \{0,1/2,1\}$ for $j = 1, \ldots, k-1,$
        with $\delta_1 = 1$ and $\delta_{k-1} \geq 1/2,$
        and with $\delta_j = 1$ if and only if $\delta_{j-1} = 0$ for $j = 2, \ldots, k-1,$
        as defined in Theorem \ref{thm:HigherCorrelations}.
        Let $a = |f'(0)|.$
        Theorem \ref{thm:HigherCorrelations} gives 
        \begin{equation*}
            \left|\ic \xi_{i_1}^{\varepsilon_1} \ldots  \xi_{i_k}^{\varepsilon_k}\, dm\right|
            \leq k! S_N^k C(f)^k \sum_{\boldsymbol{n} \in \mathcal{C}} a^{\Phi(\boldsymbol{n})}.
        \end{equation*}
        We split the sum over $\boldsymbol{n} \in \mathcal{C}$ as follows.
        Let $\mathcal{D}$ denote the set of $(k-1)$-tuples $\boldsymbol{\delta} = \{\delta_j\}_{j=1}^{k-1}$
        of coefficients that can appear in $\Phi(\boldsymbol{n}).$
        That is, those tuples with $\delta_j \in \{0,1/2,1\}$ for $j = 1, \ldots, k-1,$
        with $\delta_1 = 1$ and $\delta_{k-1} \geq 1/2,$
        and with $\delta_j = 1$ if and only if $\delta_{j-1} = 0,$ for $j = 2, \ldots, k-1.$
        Observe that there are less than $2^k$ such tuples.
        Given a $k$-tuple $\boldsymbol{n} \in \mathcal{C},$
        let us denote by $\boldsymbol{\delta}(\boldsymbol{n})$ the $(k-1)$-tuple $\boldsymbol{\delta}$
        of coefficients appearing in $\Phi(\boldsymbol{n}).$
        Then we have that
        \begin{equation*}
            \sum_{\boldsymbol{n} \in \mathcal{C}} a^{\Phi(\boldsymbol{n})} =
            \sum_{\boldsymbol{\delta} \in \mathcal{D}} \sum_{\{\boldsymbol{n} \in \mathcal{C}\colon \boldsymbol{\delta}(\boldsymbol{n}) =
            \boldsymbol{\delta}\}} a^{\Phi(\boldsymbol{n})}.
        \end{equation*}
        Given $\boldsymbol{\delta} = \{\delta_j\}_{j=1}^{k-1} \in \mathcal{D},$
        we define $\Phi_{\boldsymbol{\delta}}(\boldsymbol{n}) = \sum_{j=1}^{k-1} \delta_j (n_{j+1} - n_j)$
        for every $\boldsymbol{n} \in \mathcal{C}.$
        We clearly have that
        \begin{equation}
            \label{eq:SumFixingDelta}
            \sum_{\boldsymbol{n} \in \mathcal{C}} a^{\Phi(\boldsymbol{n})} \leq
            \sum_{\boldsymbol{\delta} \in \mathcal{D}} \sum_{\boldsymbol{n} \in \mathcal{C}} a^{\Phi_{\boldsymbol{\delta}}(\boldsymbol{n})}.
        \end{equation}
        Consider now a fixed $\boldsymbol{\delta} = (\delta_1,\ldots,\delta_{k-1}),$
        and recall that $\delta_1 = 1.$
        Let $l(1)$ be the minimum integer such that $\delta_{l(1)+1} = 0$
        (we set $l(1) = k-1$ if $\delta_j \neq 0$ for all $1 \leq j \leq k-1$).
        In particular, observe that if $l(1) > 1,$ we have that $\delta_j = 1/2$ for $2 \leq j \leq l(1)$
        by Theorem \ref{thm:HigherCorrelations}.
        Thus, to find a bound for the right-hand side of \eqref{eq:SumFixingDelta}, we need to estimate sums of the form
        \begin{equation}
            \label{eq:PartialSumOverIndices}
            \sum_{j=1}^l \sum_{n_j \in \A(i_j)} a^{(n_2 - n_1) + (n_l - n_2)/2} =
            \sum_{j=1}^l \sum_{n_j \in \A(i_j)} a^{(n_2 - n_1)/2 + (n_l - n_1)/2}
        \end{equation}
        for some $1 < l \leq k-1.$
        Denote here $\overline{n_1} = \max \A(i_1),$ $\underline{n_2} = \min \A(i_2)$ and $\underline{n_l} = \min \A(i_l),$
        and observe that $\underline{n_2} - \overline{n_1} \geq q_N^\beta$ because of \eqref{eq:IndicesSeparation}.
        Assume $l > 2$.
        Summing over $n_1$ and $n_2$ we get that \eqref{eq:PartialSumOverIndices} is bounded by
        \begin{equation*}
            C a^{q_N^\beta/2}\sum_{j=3}^l \sum_{n_j \in \A(i_j)} a^{(n_l - \overline{n_1})/2}.
        \end{equation*}
        Next, summing over $n_j$ for $j$ up to $l-1$ yields the factor $|\A(i_3)| + \ldots + |\A(i_{l-1})|,$
        while summing over $n_l$ we get the factor $a^{(\underline{n_l} - \overline{n_1})/2}.$
        Here, $|\A(i_j)|$ denotes the number of indices in the set $\A(i_j).$
        Using \eqref{eq:IndicesSeparation}, we have that $|\A(i_j)| \geq p_N^\gamma > q_N^\beta$ for any $j = 1, \ldots, k$
        and, thus, we get that $\underline{n_l} - \overline{n_1} \geq q_N^\beta + |\A(i_2)| + \ldots + |\A(i_{l-1})| > lq_N^\beta.$
        Hence, we find that
        \begin{equation}
            \label{eq:BoundForPartialSumOverIndices}
            \sum_{j=1}^l \sum_{n_j \in \A(i_j)} a^{(n_2 - n_1) + (n_l - n_2)/2} \leq C a^{lq_N^\beta/4}.
        \end{equation}
        Note that if $l =1$ or $l = 2$, then \eqref{eq:BoundForPartialSumOverIndices} is obvious.
        Assume now that we have determined $l(m-1).$
        If $l(m-1) < k-1,$ then let $l(m)$ be the minimum integer such that $l(m-1) < l(m) \leq k-1$
        and such that $\delta_{l(m)+1} = 0.$
        We iterate this process until we set $l(r) = k-1$ for some integer $1 \leq r \leq k.$
        Observe that, by Theorem \ref{thm:HigherCorrelations}, we have that $l(m) \geq l(m-1) + 2.$
        Taking $l(0) = 0,$ the full sum over $\boldsymbol{n} \in \mathcal{C}$ in the right-hand side of \eqref{eq:SumFixingDelta}
        becomes a product of sums of the form \eqref{eq:PartialSumOverIndices} with $j$ ranging from $l(m-1)+1$ to $l(m),$
        for $m = 1, \ldots, r.$
        Thus, applying the bound \eqref{eq:BoundForPartialSumOverIndices} we get that
        \begin{equation*}
            \sum_{\boldsymbol{n} \in \mathcal{C}} a^{\Phi_{\boldsymbol{\delta}}(\boldsymbol{n})} \leq
            \prod_{m=1}^r C a^{(l(m)-l(m-1))q_N^\beta/4} \leq
            C^k a^{kq_N^\beta/4}.
        \end{equation*}
        Now, summing over $\boldsymbol{\delta} \in \mathcal{D}$ and using the fact that there are at most $2^k$ such tuples, we get that
        \begin{equation*}
            \sum_{\boldsymbol{n} \in \mathcal{C}} a^{\Phi(\boldsymbol{n})} \leq C^k a^{k q_N^\beta / 4}.
        \end{equation*}
        Thus
        \begin{equation*}
            \left|\ic \xi_{i_1}^{\varepsilon_1} \ldots  \xi_{i_k}^{\varepsilon_k}\, dm\right|
            \leq k! S_N^k C(f)^k a^{k q_N^\beta / 4}.
        \end{equation*}
        We deduce that 
        \begin{equation*}
            \left|\ic \langle t , \xi_{i_1} \rangle \ldots  \langle t , \xi_{i_k} \rangle\, dm\right|
            \leq k! S_N^k C(f)^k |t|^k a^{k q_N^\beta /4}.
        \end{equation*}
        Since the total number of collections of indices $1 \leq i_1 < \ldots < i_k \leq Q_N$ is $\binom{Q_N}{k}$, we deduce that
        \begin{equation*}
            \left|\ic f_N\, dm - 1\right|
            \leq \sum_{k=1}^{Q_N} {\binom{Q_N}{k}} k! 2^{-k/2} \sigma_N^{-k} (C(f)S_N |t|)^k a^{k q_N^\beta / 4}.
        \end{equation*}
        Last sum is smaller than 
        \begin{equation*}
            \left( 1 + \frac{C(f)|t| S_N Q_N  a^{q_N^\beta / 4}}{\sqrt{2} \sigma_N} \right)^{Q_N} - 1,
        \end{equation*}
        which tends to $0$ as $ N \to \infty$ because
        \begin{equation*}
            \lim_{N \to \infty} \frac{S_N Q_N^2 a^{q_N^\beta / 4}}{\sigma_N} = 0. 
        \end{equation*}
    \end{proof}
    
    \printbibliography

@article {ref:AaronsonErgodicInnerFunctions,
    AUTHOR = {Aaronson, Jon},
     TITLE = {Ergodic theory for inner functions of the upper half plane},
   JOURNAL = {Ann. Inst. H. Poincar\'{e} Sect. B (N.S.)},
  FJOURNAL = {Annales de l'Institut Henri Poincar\'{e}. Section B. Calcul des
              Probabilit\'{e}s et Statistique. Nouvelle S\'{e}rie},
    VOLUME = {14},
      YEAR = {1978},
    NUMBER = {3},
     PAGES = {233--253},
      ISSN = {0020-2347},
   MRCLASS = {28D05 (30D50)},
  MRNUMBER = {508928},
MRREVIEWER = {Michael Lin},
}

@article {ref:AleksandrovMeasurablePartitionsCircle,
    AUTHOR = {Aleksandrov, A. B.},
     TITLE = {Measurable partitions of the circle induced by inner
              functions},
   JOURNAL = {Zap. Nauchn. Sem. Leningrad. Otdel. Mat. Inst. Steklov.
              (LOMI)},
  FJOURNAL = {Zapiski Nauchnykh Seminarov Leningradskogo Otdeleniya
              Matematicheskogo Instituta imeni V. A. Steklova Akademii Nauk
              SSSR (LOMI)},
    VOLUME = {149},
      YEAR = {1986},
    NUMBER = {Issled. Line\u{i}n. Teor. Funktsi\u{i}. XV},
     PAGES = {103--106, 188},
      ISSN = {0373-2703},
   MRCLASS = {30D50 (30H05)},
  MRNUMBER = {849298},
MRREVIEWER = {V. Kabaila},
       DOI = {10.1007/BF01665047},
       URL = {https://doi.org/10.1007/BF01665047},
}

@article {ref:AleksandrovMultiplicityBoundaryValuesInnerFunctions,
    AUTHOR = {Aleksandrov, A. B.},
     TITLE = {Multiplicity of boundary values of inner functions},
   JOURNAL = {Izv. Akad. Nauk Armyan. SSR Ser. Mat.},
  FJOURNAL = {Izvestiya Akademii Nauk Armyansko\u{\i} SSR. Seriya Matematika},
    VOLUME = {22},
      YEAR = {1987},
    NUMBER = {5},
     PAGES = {490--503, 515},
      ISSN = {0002-3043},
   MRCLASS = {30D50 (30D40 30D55 46J15 47B38)},
  MRNUMBER = {931885},
MRREVIEWER = {D. Sarason},
}

@article {ref:AleksandrovInnerFunctionsAndRelatedSpaces,
    AUTHOR = {Aleksandrov, A. B.},
     TITLE = {Inner functions and related spaces of pseudocontinuable
              functions},
   JOURNAL = {Zap. Nauchn. Sem. Leningrad. Otdel. Mat. Inst. Steklov.
              (LOMI)},
  FJOURNAL = {Zapiski Nauchnykh Seminarov Leningradskogo Otdeleniya
              Matematicheskogo Instituta imeni V. A. Steklova Akademii Nauk
              SSSR (LOMI)},
    VOLUME = {170},
      YEAR = {1989},
    NUMBER = {Issled. Line\u{i}n. Oper. Teorii Funktsi\u{i}. 17},
     PAGES = {7--33, 321},
      ISSN = {0373-2703},
   MRCLASS = {30D55 (30H05 46E30 47B38)},
  MRNUMBER = {1039571},
MRREVIEWER = {D. Sarason},
       DOI = {10.1007/BF01099304},
       URL = {https://doi.org/10.1007/BF01099304},
}

@article {ref:BaranskiFagellaJarqueKarpinskaAccessesInfinity,
    AUTHOR = {Bara\'{n}ski, Krzysztof and Fagella, N\'{u}ria and Jarque, Xavier and
              Karpi\'{n}ska, Bogus\l awa},
     TITLE = {Accesses to infinity from {F}atou components},
   JOURNAL = {Trans. Amer. Math. Soc.},
  FJOURNAL = {Transactions of the American Mathematical Society},
    VOLUME = {369},
      YEAR = {2017},
    NUMBER = {3},
     PAGES = {1835--1867},
      ISSN = {0002-9947},
   MRCLASS = {37F10 (30D05 30D30)},
  MRNUMBER = {3581221},
MRREVIEWER = {Tarakanta Nayak},
       DOI = {10.1090/tran/6739},
       URL = {https://doi.org/10.1090/tran/6739},
 SHORTHAND = {BFJK17},
}

@article {ref:BaranskiFagellaJarqueKarpinskaEscapingPoints,
    AUTHOR = {Bara\'{n}ski, Krzysztof and Fagella, N\'{u}ria and Jarque, Xavier and
              Karpi\'{n}ska, Bogus\l awa},
     TITLE = {Escaping points in the boundaries of {B}aker domains},
   JOURNAL = {J. Anal. Math.},
  FJOURNAL = {Journal d'Analyse Math\'{e}matique},
    VOLUME = {137},
      YEAR = {2019},
    NUMBER = {2},
     PAGES = {679--706},
      ISSN = {0021-7670},
   MRCLASS = {37F10 (30D05)},
  MRNUMBER = {3938017},
       DOI = {10.1007/s11854-019-0011-0},
       URL = {https://doi.org/10.1007/s11854-019-0011-0},
 SHORTHAND = {BFJK19},
}

@book {ref:BottcherGrudskyToeplitzMatrices,
    AUTHOR = {B\"{o}ttcher, Albrecht and Grudsky, Sergei M.},
     TITLE = {Toeplitz matrices, asymptotic linear algebra, and functional
              analysis},
 PUBLISHER = {Birkh\"{a}user Verlag, Basel},
      YEAR = {2000},
      ISBN = {3-7643-6290-1},
   MRCLASS = {47B35 (15A57 46L99)},
  MRNUMBER = {1772773},
MRREVIEWER = {Stefano Serra Capizzano},
       DOI = {10.1007/978-3-0348-8395-5},
       URL = {https://doi.org/10.1007/978-3-0348-8395-5},
}

@book {ref:CimaMathesonRossCauchyTransform,
    AUTHOR = {Cima, Joseph A. and Matheson, Alec L. and Ross, William T.},
     TITLE = {The {C}auchy transform},
    SERIES = {Mathematical Surveys and Monographs},
    VOLUME = {125},
 PUBLISHER = {American Mathematical Society, Providence, RI},
      YEAR = {2006},
     PAGES = {x+272},
      ISBN = {0-8218-3871-7},
   MRCLASS = {30-02 (30E10 30E20 46E15 46E20 46E22 47B33 47B38)},
  MRNUMBER = {2215991},
MRREVIEWER = {D. Sarason},
       DOI = {10.1090/surv/125},
       URL = {https://doi.org/10.1090/surv/125},
}

@article {ref:ClarkOneDimensionalPerturbations,
    AUTHOR = {Clark, Douglas N.},
     TITLE = {One dimensional perturbations of restricted shifts},
   JOURNAL = {J. Analyse Math.},
  FJOURNAL = {Journal d'Analyse Math\'{e}matique},
    VOLUME = {25},
      YEAR = {1972},
     PAGES = {169--191},
      ISSN = {0021-7670},
   MRCLASS = {47A55},
  MRNUMBER = {301534},
MRREVIEWER = {S. R. Caradus},
       DOI = {10.1007/BF02790036},
       URL = {https://doi.org/10.1007/BF02790036},
}

@article {ref:CraizerEntropyInnerFunctions,
    AUTHOR = {Craizer, M.},
     TITLE = {Entropy of inner functions},
   JOURNAL = {Israel J. Math.},
  FJOURNAL = {Israel Journal of Mathematics},
    VOLUME = {74},
      YEAR = {1991},
    NUMBER = {2-3},
     PAGES = {129--168},
      ISSN = {0021-2172},
   MRCLASS = {30D50 (28D20)},
  MRNUMBER = {1135232},
MRREVIEWER = {Ch. Pommerenke},
       DOI = {10.1007/BF02775784},
       URL = {https://doi.org/10.1007/BF02775784},
}

@article {ref:DoeringMane,
    AUTHOR = {Doering, Claus I. and Ma\~{n}\'e, Ricardo},
     TITLE = {The dynamics of inner functions},
   JOURNAL = {Ensaios Matemáticos},
    VOLUME = {3},
      YEAR = {1991},
     PAGES = {5--79},
       URL = {http://eudml.org/doc/186559},
}

@article {ref:EvdoridouFagellaJarqueSixsmithInnerFunctionSingularities,
    AUTHOR = {Evdoridou, Vasiliki and Fagella, N\'{u}ria and Jarque, Xavier and
              Sixsmith, David J.},
     TITLE = {Singularities of inner functions associated with hyperbolic
              maps},
   JOURNAL = {J. Math. Anal. Appl.},
  FJOURNAL = {Journal of Mathematical Analysis and Applications},
    VOLUME = {477},
      YEAR = {2019},
    NUMBER = {1},
     PAGES = {536--550},
      ISSN = {0022-247X},
   MRCLASS = {30J05 (30D05 30D15 37F10)},
  MRNUMBER = {3950051},
MRREVIEWER = {Crist\'{o}bal Gonz\'{a}lez},
       DOI = {10.1016/j.jmaa.2019.04.045},
       URL = {https://doi.org/10.1016/j.jmaa.2019.04.045},
 SHORTHAND = {EFJS19},
}

@article {ref:FernandezMelianPestanaInnerFunctionsMixing,
    AUTHOR = {Fern\'{a}ndez, José L. and Meli\'{a}n, María V. and Pestana, Domingo},
     TITLE = {Quantitative mixing results and inner functions},
   JOURNAL = {Math. Ann.},
  FJOURNAL = {Mathematische Annalen},
    VOLUME = {337},
      YEAR = {2007},
    NUMBER = {1},
     PAGES = {233--251},
      ISSN = {0025-5831},
   MRCLASS = {37A25 (30D05 30D50 37A05 37F10)},
  MRNUMBER = {2262783},
MRREVIEWER = {P. Lappan},
       DOI = {10.1007/s00208-006-0036-4},
       URL = {https://doi.org/10.1007/s00208-006-0036-4},
}

@article {ref:FernandezMelianPestanaExpandingMaps,
    AUTHOR = {Fern\'{a}ndez, J. L. and Meli\'{a}n, M. V. and Pestana, D.},
     TITLE = {Expanding maps, shrinking targets and hitting times},
   JOURNAL = {Nonlinearity},
  FJOURNAL = {Nonlinearity},
    VOLUME = {25},
      YEAR = {2012},
    NUMBER = {9},
     PAGES = {2443--2471},
      ISSN = {0951-7715},
   MRCLASS = {37A25 (37A05 37B20 37D20)},
  MRNUMBER = {2967113},
MRREVIEWER = {Nathaniel F. G. Martin},
       DOI = {10.1088/0951-7715/25/9/2443},
       URL = {https://doi.org/10.1088/0951-7715/25/9/2443},
}

@article {ref:FernandezPestanaDistortionInnerFunctions,
    AUTHOR = {Fern\'{a}ndez, Jos\'{e} L. and Pestana, Domingo},
     TITLE = {Distortion of boundary sets under inner functions and
              applications},
   JOURNAL = {Indiana Univ. Math. J.},
  FJOURNAL = {Indiana University Mathematics Journal},
    VOLUME = {41},
      YEAR = {1992},
    NUMBER = {2},
     PAGES = {439--448},
      ISSN = {0022-2518},
   MRCLASS = {30C35 (30C85)},
  MRNUMBER = {1183352},
MRREVIEWER = {D. H. Hamilton},
       DOI = {10.1512/iumj.1992.41.41025},
       URL = {https://doi.org/10.1512/iumj.1992.41.41025},
}

@article {ref:GallardoNieminenACMeasures,
    AUTHOR = {Gallardo-Guti\'{e}rrez, Eva A. and Nieminen, Pekka J.},
     TITLE = {The linear fractional model theorem and {A}leksandrov-{C}lark
              measures},
   JOURNAL = {J. Lond. Math. Soc. (2)},
  FJOURNAL = {Journal of the London Mathematical Society. Second Series},
    VOLUME = {91},
      YEAR = {2015},
    NUMBER = {2},
     PAGES = {596--608},
      ISSN = {0024-6107},
   MRCLASS = {30D05},
  MRNUMBER = {3355117},
MRREVIEWER = {A. Hinkkanen},
       DOI = {10.1112/jlms/jdv002},
       URL = {https://doi.org/10.1112/jlms/jdv002},
}

@book {ref:GarnettBoundedAnalyticFunctions,
    AUTHOR = {Garnett, John B.},
     TITLE = {Bounded analytic functions},
    SERIES = {Graduate Texts in Mathematics},
    VOLUME = {236},
   EDITION = {first},
 PUBLISHER = {Springer, New York},
      YEAR = {2007},
      ISBN = {0-387-33621-4},
   MRCLASS = {30D55 (30-02 30H05 46E15 46J15)},
  MRNUMBER = {2261424},
}

@article {ref:LeviNicolauSoler,
    AUTHOR = {Levi, Matteo and Nicolau, Artur and Soler i Gibert, Od\'i},
     TITLE = {Distortion and Distribution of Sets Under Inner Functions},
   JOURNAL = {J. Geom. Anal.},
  FJOURNAL = {Journal of Geometric Analysis},
    VOLUME = {},
      YEAR = {2019},
    NUMBER = {},
     PAGES = {in press},
      ISSN = {},
   MRCLASS = {},
  MRNUMBER = {},
       DOI = {},
       URL = {https://doi.org/10.1007/s12220-019-00236-w},
 SHORTHAND = {LNS19},
}

@article {ref:NeuwirthErgodicity,
    AUTHOR = {Neuwirth, J. H.},
     TITLE = {Ergodicity of some mappings of the circle and the line},
   JOURNAL = {Israel J. Math.},
  FJOURNAL = {Israel Journal of Mathematics},
    VOLUME = {31},
      YEAR = {1978},
    NUMBER = {3-4},
     PAGES = {359--367},
      ISSN = {0021-2172},
   MRCLASS = {28D20 (10K99)},
  MRNUMBER = {516157},
MRREVIEWER = {R. L. Adler},
       DOI = {10.1007/BF02761501},
       URL = {https://doi.org/10.1007/BF02761501},
}

@incollection {ref:PoltoratskiSarasonACMeasures,
    AUTHOR = {Poltoratski, Alexei and Sarason, Donald},
     TITLE = {Aleksandrov-{C}lark measures},
 BOOKTITLE = {Recent advances in operator-related function theory},
    SERIES = {Contemp. Math.},
    VOLUME = {393},
     PAGES = {1--14},
 PUBLISHER = {Amer. Math. Soc., Providence, RI},
      YEAR = {2006},
   MRCLASS = {30D50 (30D55 30E05 30E20 47A45 47A55 47B15 47B33)},
  MRNUMBER = {2198367},
MRREVIEWER = {Javad Mashreghi},
       DOI = {10.1090/conm/393/07366},
       URL = {https://doi.org/10.1090/conm/393/07366},
}

@article {ref:PommerenkeErgodicInnerFunctions,
    AUTHOR = {Pommerenke, Christian},
     TITLE = {On ergodic properties of inner functions},
   JOURNAL = {Math. Ann.},
  FJOURNAL = {Mathematische Annalen},
    VOLUME = {256},
      YEAR = {1981},
    NUMBER = {1},
     PAGES = {43--50},
      ISSN = {0025-5831},
   MRCLASS = {30D05 (28D20)},
  MRNUMBER = {620121},
MRREVIEWER = {Kenneth Stephenson},
       DOI = {10.1007/BF01450942},
       URL = {https://doi.org/10.1007/BF01450942},
}

@incollection {ref:SaksmanACMeasures,
    AUTHOR = {Saksman, Eero},
     TITLE = {An elementary introduction to {C}lark measures},
 BOOKTITLE = {Topics in complex analysis and operator theory},
     PAGES = {85--136},
 PUBLISHER = {Univ. M\'{a}laga, M\'{a}laga},
      YEAR = {2007},
   MRCLASS = {47A55 (30D35 30D40 30D55 30E20 47B33 47B38)},
  MRNUMBER = {2394657},
MRREVIEWER = {D. Sarason},
}

@article {ref:SalemZygmundLacunarySeries-I,
    AUTHOR = {Salem, R. and Zygmund, A.},
     TITLE = {On lacunary trigonometric series},
   JOURNAL = {Proc. Nat. Acad. Sci. U.S.A.},
  FJOURNAL = {Proceedings of the National Academy of Sciences of the United
              States of America},
    VOLUME = {33},
      YEAR = {1947},
     PAGES = {333--338},
      ISSN = {0027-8424},
   MRCLASS = {42.4X},
  MRNUMBER = {22263},
MRREVIEWER = {R. Fortet},
       DOI = {10.1073/pnas.33.11.333},
       URL = {https://doi.org/10.1073/pnas.33.11.333},
}

@article {ref:SalemZygmundLacunarySeries-II,
    AUTHOR = {Salem, R. and Zygmund, A.},
     TITLE = {On lacunary trigonometric series. {II}},
   JOURNAL = {Proc. Nat. Acad. Sci. U.S.A.},
  FJOURNAL = {Proceedings of the National Academy of Sciences of the United
              States of America},
    VOLUME = {34},
      YEAR = {1948},
     PAGES = {54--62},
      ISSN = {0027-8424},
   MRCLASS = {42.4X},
  MRNUMBER = {23936},
MRREVIEWER = {R. Fortet},
       DOI = {10.1073/pnas.34.2.54},
       URL = {https://doi.org/10.1073/pnas.34.2.54},
}

@book {ref:ShapiroCompositionOperators,
    AUTHOR = {Shapiro, Joel H.},
     TITLE = {Composition operators and classical function theory},
    SERIES = {Universitext: Tracts in Mathematics},
 PUBLISHER = {Springer-Verlag, New York},
      YEAR = {1993},
      ISBN = {0-387-94067-7},
   MRCLASS = {47B38 (30C99 46E20 46J15)},
  MRNUMBER = {1237406},
MRREVIEWER = {Aristomenis Siskakis},
       DOI = {10.1007/978-1-4612-0887-7},
       URL = {https://doi.org/10.1007/978-1-4612-0887-7},
}

@article {ref:WeissLILLacunarySeries,
    AUTHOR = {Weiss, Mary},
     TITLE = {The law of the iterated logarithm for lacunary trigonometric
              series},
   JOURNAL = {Trans. Amer. Math. Soc.},
  FJOURNAL = {Transactions of the American Mathematical Society},
    VOLUME = {91},
      YEAR = {1959},
     PAGES = {444--469},
      ISSN = {0002-9947},
   MRCLASS = {42.00},
  MRNUMBER = {108681},
MRREVIEWER = {J.-P. Kahane},
       DOI = {10.2307/1993258},
       URL = {https://doi.org/10.2307/1993258},
}

\end{document}